
\documentclass[11pt, a4paper]{article}
\usepackage[utf8]{inputenc}
\usepackage{xcolor}
\usepackage{array}
\usepackage{algorithm}
\usepackage{algorithmic}
\usepackage{fullpage}
\usepackage{enumitem}
\usepackage{mathtools}
\usepackage{amsmath,amsfonts,amssymb,amsthm}

\usepackage{enumitem}
\usepackage{mathtools}
\providecommand*{\Covv}[1]{\operatorname{Cov}\left({#1}\right)}   
\providecommand*{\Covvn}[2]{\operatorname{Cov}_{#2}\left({#1}\right)}   
\providecommand*{\Uni}[1]{\operatorname{Uni}({#1})}
\providecommand*{\Var}[1]{\operatorname{Var}\left({#1}\right)}   


\providecommand*{\len}[1]{\SN{{#1}}}   
\newcommand\independent{\protect\mathpalette{\protect\independenT}{\perp}}
\def\independenT#1#2{\mathrel{\rlap{$#1#2$}\mkern2mu{#1#2}}}

\providecommand{\Dim}{\operatorname{dim}}            
\providecommand{\dim}{\Dim}


\providecommand*{\dist}[2]{\operatorname{dist}({#1};{#2})}   




\providecommand{\rank}{\operatorname{rank}}                        

\renewcommand{\Im}{\operatorname{Im}}             
\providecommand{\argmin}{\operatorname*{argmin}}  
\providecommand{\Id}{\Op{Id}}                     









 %








\providecommand{\CB}{{\cal B}}
\providecommand{\CC}{{\cal C}}
\providecommand{\CD}{{\cal D}}

\providecommand{\CI}{{\cal I}}

\providecommand{\CM}{{\cal M}}

\providecommand{\CO}{{\cal O}}

\providecommand{\CR}{{\cal R}}
\providecommand{\CS}{{\cal S}}

\providecommand{\CU}{{\cal U}}

\providecommand{\CX}{{\cal X}}
\providecommand{\CY}{{\cal Y}}



\providecommand{\bbE}{\mathbb{E}}

\providecommand{\bbN}{\mathbb{N}}

\providecommand{\bbP}{\mathbb{P}}

\providecommand{\bbR}{\mathbb{R}}
\providecommand{\bbS}{\mathbb{S}}



\providecommand{\FI}{\mathfrak{I}}


\newcommand*{\EUSP}[2]{\left<{#1},{#2}\right>} 


\providecommand*{\N}[1]{\left\|{#1}\right\|} 
\newcommand*{\SN}[1]{\left|{#1}\right|}      

\newcommand*{\LRP}[1]{\left(#1\right)}




\newcommand*{\Op}[1]{\mathsf{#1}} 




















\usepackage{booktabs}
\usepackage{hyperref}
\usepackage{multirow}
\usepackage[caption=false]{subfig}
\usepackage{geometry}
\geometry{
a4paper,
total={160mm,247mm},
left=25mm,
right=25mm,
top=20mm,
}
\usepackage{enumitem}
\usepackage{booktabs}
\usepackage{url}
\usepackage{arydshln}
\setcounter{topnumber}{2}
\setcounter{bottomnumber}{2}
\setcounter{totalnumber}{4}

\usepackage[protrusion=true,expansion=true]{microtype}				
\theoremstyle{plain}
\newtheorem{theorem}{Theorem}
\newtheorem{proposition}[theorem]{Proposition}
\newtheorem{lemma}[theorem]{Lemma}
\newtheorem{corollary}[theorem]{Corollary}

\theoremstyle{definition}
\newtheorem{definition}[theorem]{Definition}

\newtheorem{remark}[theorem]{Remark}

\makeatletter
\long\def\REVERSEIT #1\\%
  {\@REVERSEIT
  #1&\@gobble&\@gobble&\@gobble&\@gobble&\@gobble&\@gobble&\@gobble&\@gobble\\}

\long\def\@REVERSEIT #1&#2&#3&#4&#5&#6&#7&#8\\%
  {#1&#8&#6&#3&#5&#2&#4&#7\\}
\makeatother

%

%
%

\newcommand{\intMetMod}[3]{\Delta_{#3}({#1}, {#2})}
\newcommand{\smean}[1]{\hat{\bbE}{#1}}
\newcommand{\lsmean}[2]{\hat{\bbE}_{#1}{#2}}
\newcommand{\sslen}{\SN{\CS}}
\newcommand{\ssler}{\SN{\CR}}
\newcommand{\lengamma}{\SN{\CI}}
\newcommand{\minSigma}{C_{\perp}}
\newcommand{\minLambda}{{\lambda}}
\newcommand{\suppmarg}{\textrm{supp}(\rho_X)}
\newcommand{\reach}{\tau_\gamma}
\newcommand{\highlight}[1]{\mathbf{#1}}
\newcommand{\curvtor}{{\kappa}}
\newcommand{\tempradius}{\eta}
\newcommand{\boundnormal}{B}
\newcommand{\Prpara}{P_{\CR}}
\newcommand{\Prperp}{Q_{\CR}}
\newcommand{\Ptan}{P}
\newcommand{\Pperp}{Q}
\newcommand{\sigmadiff}{C_W}
\newcommand{\summaryCrit}{\eta}
\newcommand{\summaryMax}{\theta}
\newcommand{\uniformC}{{c_V}}
\newcommand{\minSigmaY}{{\sigma_Y}}
\newcommand{\minSignal}{{\sigma_Y}} 

\newcommand{\minSigmaYcool}[1]{{\sigma_{{#1},Y}}}

\usepackage[affil-it]{authblk}

\date{}
\author[1]{\v Zeljko Kereta\thanks{Email: \texttt{zeljko@simula.no}} }
\author[1]{Timo Klock\thanks{Email: \texttt{timo@simula.no}} }
\author[2]{Valeriya Naumova\thanks{Email: \texttt{valeriya@simula.no}}}

\affil[1]{Simula Research Laboratory, Machine Intelligence Department, Oslo, Norway}
\affil[2]{SimulaMet, Machine Intelligence Department, Oslo, Norway}

\title{
		\usefont{OT1}{bch}{b}{n}
		\huge Nonlinear generalization of the monotone single index model\\
}

\begin{document}

\maketitle

\begin{abstract}
Single index model is a powerful yet simple model, widely used in statistics, machine learning, and other scientific fields.
It models the regression function as $g(\EUSP{a}{x})$, where $a$ is an unknown index vector and $x$ are the features.
This paper deals with a nonlinear generalization of this framework to allow for a regressor that uses multiple index vectors, adapting to local changes in the responses.
To do so we exploit the conditional distribution over function-driven partitions, and use linear regression to locally estimate the index vectors.
We then regress by applying a kNN type estimator that uses a localized proxy of the geodesic metric.
We present theoretical guarantees for estimation of local index vectors and out-of-sample prediction, and demonstrate the performance of our method with experiments on synthetic and real-world data sets, comparing it with state-of-the-art methods.
\end{abstract}
\textbf{Keywords:}{ high-dimensional regression, dimension reduction, single index model, nonparametric regression, nonlinear methods}

\section{Introduction}
\label{sec:introduction}

Many problems in data analysis can be formulated as learning a function from a given data set in a high-dimensional space.
Due to the curse of dimensionality, accurate regression on high-dimensional functions typically requires a number of samples that scales exponentially with the ambient dimension \cite{S98}.
A common approach to mitigating these effects is to impose structural assumptions on the data. 
Indeed, a number of recent advances in data analysis and numerical simulation are based on the observation that high-dimensional, real-world data is inherently structured, and that the relationship between the features and the responses is of a lower dimensional nature \cite{AC09}.

The most direct such model, which has become an important prior for many statistical and machine learning paradigms, considers a $1$-dimensional relationship of the form
\begin{equation}
\label{eq:sim}
Y = f(X)  + \varepsilon, \text{ for } f(X) = g(\EUSP{a}{X}),
\end{equation}
where $\varepsilon$ is a random noise term, and the features $X \in \bbR^D$ and responses  $Y \in \bbR$ are related
through an unknown index vector $a \in \bbR^D$ and an unknown monotonic function $g$.
Model \eqref{eq:sim} is called the \emph{single index model} (SIM), and it first appeared in economical and statistical communities in the early 90s \cite{h96,ichimura1993semiparametric}.
Moreover, SIM provides a basis for more complex models such as multi-index models \cite{samworth2016,li2005contour,HTF+05} and neural networks \cite{LBH15}.

An assumption shared by SIM and generalizations is that there is a single lower dimensional linear space that accounts for the complexity in relating $X$ and $Y$.
While simple, this assumption is only a first level approximation and is rarely observed in real-world regression problems.
The goal of this paper is to relax the assumption on global linearity in the model \eqref{eq:sim}, in order to locally adapt to changes in the relationship between $X$ and $Y$.
Specifically, we propose the \emph{nonlinear single index model} (NSIM), defined by
\begin{equation}
\label{eq:nsim}
Y=f(X) + \varepsilon, \text{ for } f(X) = g(\pi_{\gamma}(X)),
\end{equation}
where $\varepsilon$ is a random noise term, $g $ is a bi-Lipschitz function, $\gamma:\CI\rightarrow\bbR^D$ is a parametrization of a $\CC^2$ curve $\Im(\gamma)$, and $\pi_{\gamma}$ is the corresponding orthogonal projection, defined by
\begin{equation}
\label{eq:orthogonal_projection}
\pi_{\gamma}(x) \in \argmin_{z \in \Im(\gamma)}\N{x - z}.
\end{equation}
Function $g$ can be seen as a univariate scalar function, defined on the parametrization domain $\CI$, provided $\Im(\gamma)$ is a simple curve.
This identification is useful for defining examples of the setting, and reveals SIM as a special example of \eqref{eq:nsim}, where  $\gamma(t) = at$.

Before formally describing the assumptions and details of our approach, let us begin with a couple of comments.
Recall that smooth curves can be locally approximated by affine approximations, \emph{i.e.}, $\pi_{\gamma}(x) \approx \EUSP{a_j}{x}+c_j,$ where $a_j$ is a local tangent vector of $\Im(\gamma)$.
Problem  \eqref{eq:nsim} can therefore be approximated by a family of problems of the type
$f(x) \approx g_j\LRP{\EUSP{a_j}{x}}$, where $j$ corresponds to pieces of $\Im(\gamma)$ that are
approximately affine.
Notice now that due to the monotonicity of $g$, the proximity of $f(x)$ and  $f(x')$ implies the proximity of $\pi_{\gamma}(x)$ and $\pi_{\gamma}(x')$, and vice versa.
Therefore, instead of looking at approximately affine pieces of $\Im(\gamma)$, we can equivalently
consider a partition of $\Im(f)$, consisting of disjoint intervals $\mathcal{\CR}_j$, and split
\eqref{eq:nsim} into a family of localized SIM problems
\begin{equation}
\label{eq:local_sim_discrete}
\bbE[Y|X, f(X) \in \mathcal{\CR}_j] \approx g_j\left(\EUSP{a_j}{X}\right),\quad j=1,\ldots,J,
\end{equation}
where tangent vectors $a_j$ now play the role of index vectors in \eqref{eq:sim}.

\begin{figure}
\centering
\includegraphics[trim={0 1cm 0 3.25cm},clip, width=0.75\textwidth]{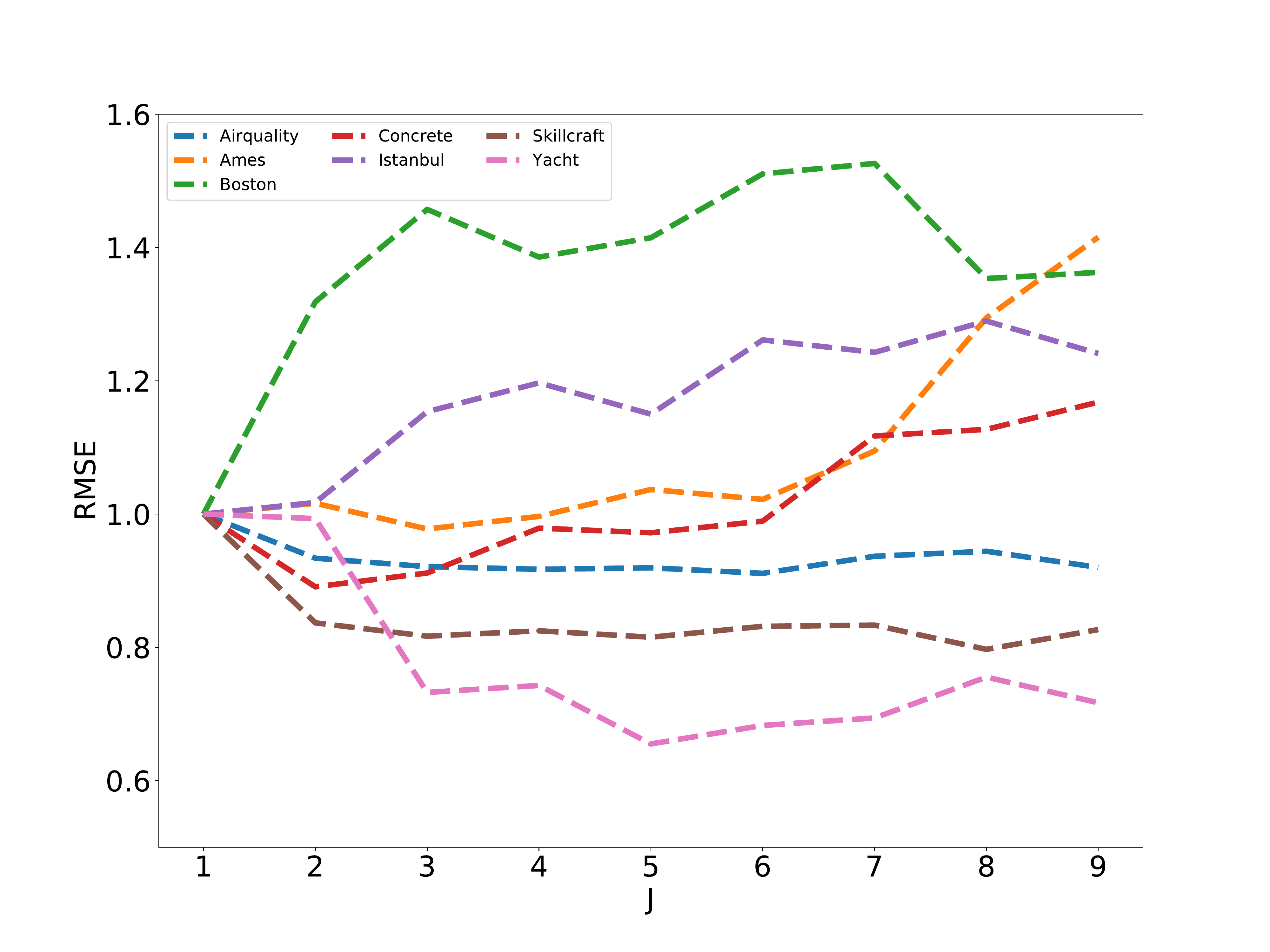}
\caption{The generalization error for  SIM ($J = 1$) and NSIM ($J>1$) on 7 UCI data sets.  We report the mean of
20 repetitions of the experiments for each $J$ and each data set.
To create a partition into level sets we construct statistically equivalent blocks based on the ordered $Y$-sequence.
The results for each data set are normalized so that the value at $J=1$ is $1.0$.
}
\label{fig:motivational_experiments}
\end{figure}

In Figure \ref{fig:motivational_experiments} we study the effects of such an approach on several UCI data sets\footnote{\url{https://archive.ics.uci.edu/ml/datasets.html}}.
Namely, for each data set we partition the data into $J$ sets, as detailed above,  learn a SIM estimator on each of the $J$ sets, and then plot the generalization error of the resulting estimator as a function of the hyperparameter $J$.
Given sufficient amount of data, we can observe that replacing \eqref{eq:sim} with \eqref{eq:local_sim_discrete}, its localized counterpart, often returns better estimation results.
For example, on the \texttt{Yacht} data set the generalization error improves by more than $30$ percent for $J=5$ compared to SIM.
Notice though that increasing the number of localized pieces does not always improve the performance.
This can mostly be attributed to the fact that splitting the original data set into disjoint subgroups reduces the number of samples within each group, which has a detrimental effect on the variance of the estimator.
In other words, we face a typical bias-variance trade-off, implying that hyperparameter $J$ needs to be carefully selected.
Furthermore, sometimes a SIM is indeed the best fit to the data (e.g. \texttt{Boston} data set). As shown in
the experiments in Section \ref{sec:experiments}, this will be detected
by our approach when combined with cross-validation to choose $J$.

\paragraph{Related work.}
\label{subsec:related_work}
To the best of our knowledge, relaxations of SIM have not yet been considered in this form.
However, three research areas are highly relevant:  linear single- and multi-index models, nonlinear sufficient dimension reduction, and manifold regression.
Below we provide a short overview of the most significant and relevant achievements in each of these fields.

\emph{Single- and multi index models} have been extensively researched, and we therefore, restrict ourselves to conceptually related work.
Most studies focus only on the estimation of index vector(s), which started with the early work on linear regression based methods \cite{brillinger2012generalized,han1987,li1989regression,sherman93}.
The most relevant work is \cite{hristache2001direct}, where the authors use iterative local linear regression to estimate the index vector $a$.
Locality is enforced by kernel weights, which are initially set to be spherical around the estimation point, and then iteratively
reshaped so that the isolines resemble level set boundaries of a strictly monotonous link function $g$.
This approach has been extended to the case of multiple index vectors \cite{dalalyan2008new}, estimating instead the corresponding index space.

Another relevant line of work are methods based on inverse regression that began
with the introduction of \emph{sliced inverse regression} (SIR) \cite{li1991sliced}.
This was followed by SAVE \cite{dennis2000save},  PHD \cite{li1992principal} , MAVE \cite{xia2002adaptive},
Contour regression \cite{li2005contour}, Directional regression \cite{li2007directional}, \emph{etc}.
The common thread shared by these methods is the use of inverse moments, such as $\bbE[X|Y]$ and $\Covv{X|Y}$, to estimate the index vector or the index space.

Several methods simultaneously learn the link function and the index vector.
We mention Isotron \cite{kalai2009isotron} and Slisotron \cite{kakade2011efficient},
which iteratively update the link function and the index vector; \cite{ganti2017learning} that  additionally
assumes sparsity of the index vector; \cite{cheng2017,kuchibhotla2016efficient} that use an iterative procedure and spline
estimates; \cite{radchenko2015high} that uses higher dimensional splines.

On the other hand, methods and theory for {\it nonlinear sufficient dimension reduction} are still in the early stages and there are many open questions.
Most of the existing studies consider kernelized versions of linear estimators (such as SIR or SAVE) to globally linearize the problem in feature space, and then apply well-known regression methods, see \cite{lee2013general,li2011principal, wu2008kernel,yeh2009nonlinear}.

Model \eqref{eq:nsim} can also be considered from the viewpoint of
{\it manifold regression,} where the goal is to estimate a function $f : \CM \rightarrow \bbR$ defined on the data.
Manifold regression methods, such as  \cite{bickel2007local, kpotufe2011k, mukherjee2010learning},
generally assume that the marginal distribution of $X$ is either supported on $\CM$ or in its close vicinity.
As a consequence, Euclidean distances can be used to locally approximate the geodesic metric.
This is a strong assumption which is implicitly or explicitly leveraged
by all manifold regression techniques, and presents a breaking point for their effective use.
In this work, we instead consider distributions that are spread in all directions of the ambient space around the curve $\gamma$.
Consequently, geodesic proximity cannot be inferred from Euclidean distances and we instead need to locally approximate the geodesic distance.

\paragraph{Main idea and estimation procedure for the NSIM model.}
\label{sec:algorithm}
Model \eqref{eq:nsim} increases the flexibility of the ordinary SIM
by allowing for varying index vectors, corresponding to different regimes of the response $f(X)$.
Consequently, a natural  approach would be to partition the data into several groups, based on $Y$,
and use a SIM-like estimator to approximate the index vector and the regression function.
In particular, our approach follows three steps.

In the \emph{first step} we partition the data set $\{(X_i,Y_i) : i \in [N]\}$ into $J$ sets, $\{\CX_j : j \in [J]\}$ and $\{\CY_j : j \in [J]\}$.
To do so we define a disjoint union of the responses, $\Im(Y) = \cup_{j=1}^{J} \CR_j$ for intervals $\CR_j$, and then set
\begin{equation}
\label{eq:level_set_requirement}
\CY_j := \CY \cap \CR_j,\quad \CX_j := \left\{X_i \in \CX : Y_i \in \CY_j\right\}.
\end{equation}
We refer to sets $\CX_j$ as \emph{level sets}, since they can be defined as $\CX_j = \CX \cap f^{-1}(\CR_j)$ in the noise-free case.
The optimal method for partitioning $\Im(Y)$ as $\cup_{j=1}^{J} \CR_j$ depends
on the marginal distribution of $Y$, and is best chosen after inspecting the empirical density.
For example, we suggest using dyadic cells of $[\min Y, \max Y]$ if the density of $Y$
is roughly uniform, and stochastically equivalent blocks
if the probability mass is unevenly distributed.

In the \emph{second step} we compute estimates  $\{\hat a_j : j \in [J]\}$ of local index vectors by using linear regression
on $\CX_j$ and $\CY_j$.
Namely, let $\hat{\Sigma}_j := \lsmean{\CX_j}{(X-\lsmean{\CX_j}{X})(X - \lsmean{\CX_j}{X})^\top}$
be the standard finite sample estimate for the conditional covariance $\Covv{X | Y \in \CR_j}$,
where $\hat\bbE$ denotes the empirical expectation.
Then, set $\hat a_j = \hat b_j/\Vert \hat b_j\Vert$, where $\hat b_j$ is the solution of linear regression,
\begin{equation}
\label{eq:tangent_argmin}
\hat{b}_j := \argmin\limits_{P_{\ker(\hat \Sigma_j)} \omega = 0} \lsmean{(\CX_j,\CY_j)}{\left(Y - \lsmean{\CY_j}{Y} - \left\langle \omega, X - \lsmean{\CX_j}{X}\right\rangle\right)^2},
\end{equation}
or equivalently,
\begin{equation}
\label{eq:tangent_direct}
\hat b_j := \hat \Sigma_j^{\dagger}\,\lsmean{(\CX_j,\CY_j)}{\LRP{(Y- \lsmean{\CY_j}{Y})(X - \lsmean{\CX_j}{X})}}.
\end{equation}
Intuitively, vectors $\hat a_j$ correspond to directions in which the function changes, and therefore approximates
local gradient directions of $f$. In the case of an ordinary SIM, it has been shown in \cite{balabdaoui2019score}
that the direction of the global linear regression vector, denoted by $\hat a$, is an unbiased estimator of
index vector $a$, if $X$ has elliptical distribution. Furthermore, $\sqrt{N}(\hat a - a)$
is asymptotically normal, hence $\hat a$ achieves $N^{-1/2}$-consistency.
As we will see in Sections \ref{sec:model} and \ref{sec:geometry}, in our case the analysis of $\hat a_j$ is more
challenging due to the underlying nonlinear geometry.

\begin{algorithm}[b!]
\caption{Summary of the NSIM Estimator}
\label{alg:main}
\vspace{0.2cm}
\textbf{Learning local index vectors}
\begin{algorithmic}
  \REQUIRE $\{(X_i, Y_i) : i \in [N]\}$, $J$
  \STATE Split data into $\{\CX_j : j \in [J]\}$ and $\{\CY_j : j \in [J]\}$ according to \eqref{eq:level_set_requirement}
  \FOR{$j=1,\ldots,J$}
    \STATE $\hat{b}_j = \hat \Sigma_j^{\dagger}\,\lsmean{(\CX_j, \CY_j)}{\LRP{(Y-\lsmean{\CY_j}{Y})(X - \lsmean{\CX_j}{X})}}$
    \STATE $\hat{a}_j = \hat{b}_j/\Vert \hat b_j\Vert$
  \ENDFOR
\ENSURE $\hat a_j$ for $j \in [J]$
\end{algorithmic}
\vspace{0.2cm}
\textbf{Out-of-sample prediction}
\begin{algorithmic}
  \REQUIRE sample $x$, sets $\{\CX_j : j \in [J]\}$, $\{\CY_j : j \in [J]\}$, index vectors $\hat{a}_j$, parameters $k$, $\tempradius$
  \STATE Compute nearest neighbor ordering $1(x),\ldots,k(x)$ based on $\intMetMod{x}{\cdot}{\tempradius}$
\ENSURE  $\hat{f}_{k}(x) = k^{-1}\sum_{i=1}^{k}Y_{i(x)}$
\end{algorithmic}
\end{algorithm}

In the \emph{final step} we use a kNN-type estimator to predict $f(x)$ for an out-of-sample $x$.
Since the make-or-break point of kNN-estimators regards how are distances between $x$ and training samples measured, the critical point of this step is about the selection of an appropriate distance function.
The issue is that the optimal choice (the geodesic metric on $\Im(\gamma)$) is not available since $\Im(\gamma)$ is not known, and the naive choice (the Euclidean metric) generally leads to estimation rates that depend on the ambient dimension, and thus the curse of dimensionality.

To develop a proxy metric, consider now the ordinary SIM.
Here the geodesic metric is equivalent to the Euclidean distance of projected samples if $\hat a$ approximates the true index vector $a$ with a sufficiently high rate, \emph{i.e.}, $\SN{\EUSP{\hat a}{(x-x')}}$ is a good proxy for the geodesic metric provided $\N{\hat a - a}$ is small.
Moreover, training the kNN estimator on
projected samples $(\EUSP{\hat a}{X_i},Y_i)$ achieves optimal univariate regression rates.
The NSIM case is more challenging because first, we have $J$ different index vectors to choose
from, and second, $x$ cannot be \emph{a priori} assigned to any level set since $f(x)$ is unknown.
Still, if we assign to each sample $X_i$ the index vector $\hat a(X_i) := \hat a_{j(X_i)}$,
where $j(X_i)$ is the unique level set with $X_i \in \CX_{j(X_i)}$,
we can show that
\begin{equation}
\label{eq:geodesic_approximation}
\intMetMod{x}{X_i}{\tempradius}:=\begin{cases}
  \SN{\hat a(X_i)^\top(x - X_i)}& \textrm{if } \N{x-X_i} \leq \tempradius,\\
  \infty &\textrm{else},
\end{cases},
\end{equation}
approximates the geodesic metric $d_{\gamma}(\pi_{\gamma}(x), \pi_{\gamma}(X_i))$ reasonably well,
under suitable choice of the \emph{restricting radius} $\tempradius$,
see Section \ref{subsec:function_estimation_general_curves}.
In the special case of a perturbed SIM, where $\gamma$ is not too far from an affine space,
this is also true for $\tempradius = \infty$, 
see Section \ref{subsec:function_estimation}.

This motivates the following estimator: let $(X_{i(x)}, Y_{i(x)})$ denote the $i$-th closest sample
to $x$ when measured in $\intMetMod{x}{\cdot}{\tempradius}$, and where ties can be broken arbitrarily.
Then set
\begin{equation}
\label{eq:prediction}
\hat{f}_{k}(x) := \frac{1}{k} \sum\limits_{i=1}^{k} Y_{i(x)}.
\end{equation}
As we will discuss in Section \ref{subsec:function_estimation_general_curves}, the radius $\tempradius$ plays a dual role.
It needs to be large enough so that there are enough samples to choose neighbors from, but small enough so that \eqref{eq:geodesic_approximation} is a good proxy for the geodesic metric.
The entire estimation approach is summarized in Algorithm \ref{alg:main}.

\paragraph{Computational complexity.} The first two
steps, partitioning and computing tangents, are dominated by $\CO(\min\{JD^3,JND^2\} + ND^2)$, which is mostly due to
forming covariance matrices and computing the generalized inverse. Out-of-sample
prediction requires $\CO(N + JD)$ operations per evaluation.

\paragraph{Contributions and organization of the paper.}
In this work we introduce a nonlinear generalization of the SIM and study estimation of the model from $N$ given
data points $\{(X_i,Y_i) : i \in [N]\}$ sampled iid. from an unknown distribution $\rho$.
The presented model synthesizes the fields of linear sufficient
dimension reduction and manifold regression, thereby attempting to extend both.
We first develop a rigorous mathematical framework, in Section \ref{sec:model}, through which NSIM can be theoretically analyzed.

We provide a simple and efficient estimator based on output-conditional linear regression
and kNN-regression. Theoretical guarantees of the approach
are subjects of Sections \ref{sec:geometry} (local index vectors) and \ref{sec:function_estimation} (function estimation).
In summary, we achieve optimal estimation rates \cite{gyorfi2006distribution,kohler2014optimal} in the noise-free scenario ($\varepsilon = 0$ almost surely), or if the data follows
the ordinary SIM. In the general case, the estimator remains biased.

The theoretical analysis on local index vector (or tangent field) estimation requires a careful study
of (conditional) ordinary linear regression \eqref{eq:tangent_argmin}. In particular,
two sources of error are present: a bias term, that decays when increasing the number $J$ of subsets in the level set partition,
and a variance term, that decays with the number of samples $N$. Our analysis
reveals a concentration bound of the form
\begin{align*}
\max_{i \in [N]}\N{\hat a(X_i) - \gamma'(\gamma^{-1} \circ \pi_\gamma(X_i))}\lesssim \frac{\kappa}{J} + \frac{\log(J)}{\sqrt{N J}},
\end{align*}
where $\kappa$ is a curvature bound for $\Im(\gamma)$. This is a surprising result
because both the bias and the variance decrease with $J$ (as long as the noise $\varepsilon$
is negligible compared to the $J^{-1}$). This observation is a key component for establishing
optimal regression rates in the noise-free case.

For the regression analysis, we show in Section \ref{sec:function_estimation}
that $\intMetMod{x}{\cdot}{\tempradius}$ is equivalent to the geodesic metric $d_{\gamma}(\pi_{\gamma}(x),\cdot)$,
up to an error made in tangent field estimation. This suffices to establish aforementioned kNN-regression guarantees.
These results are relevant from a more general perspective, because they can readily be used with other means of estimating the tangent field,
and can be extended to higher dimensional manifolds.

In Section \ref{sec:experiments} we conclude the paper with extensive numerical tests
on synthetic and real data sets, that have previously
been used as benchmarks for the SIM model. The results show that the extended flexibility
of NSIM is beneficial for both, out-of-sample prediction and model interpretability.

\paragraph{General notation.}
We use $[N] = \{1,\ldots,N\}$ for $N \in \bbN$. $\N{\cdot}$ denotes the Euclidean norm for vectors, and the spectral norm
for matrices. $d_{\gamma}$ denotes the geodesic metric on $\Im(\gamma)$. Provided that $\gamma$ is an arc-length parametrization, this means $
d_\gamma(\gamma(t_1),\gamma(t_2))=\SN{t_1-t_2}$. We extend the notation to $x,x'\in\bbR^D$ by setting $d_{\gamma}(x,x') := d_{\gamma}(\pi_{\gamma}(x), \pi_{\gamma}(x'))$ whenever projections
$\pi_{\gamma}(\cdot)$ are uniquely defined.
For a discrete set of points $A=\{x_1,\ldots,x_k\}\subset \bbR^D$ we use $\SN{A}$ to denote its number of elements.
On the other hand, if $A$ is a connected subsegment of $\Im(\gamma)$ or if $A\subset \CR$ is an interval, then $\SN{A}$ denotes its length.
By an \emph{interval}  $A\subset \bbR$ we always refer to a closed and connected subset of the real line.
We use $a\vee b = \max\{a,b\}$ and $a \wedge b =\min\{a, b\}$.
The Moore-Penrose inverse of a matrix $M$ is denoted by $M^{\dagger}$.

The abbreviation \emph{a.s.} is used as a shorthand for \emph{almost sure} events
(with respect to implicit random vectors), and \emph{iid.} refers to
\emph{independent and identically distributed} data sampling.
Table \ref{tab:Notation} contains an overview of notation and constants used in this paper.

\begin{table}[!htbp]
\begin{center}
\scriptsize
\begin{tabular}{@{}cc@{}}
      symbol& definition\\ \toprule
\textsf{geometry} & \\\midrule
$\gamma,\, \Im(\gamma)$ & $\gamma:I\subset\bbR\rightarrow\bbR^D$ is the parametrization of $\Im(\gamma)=\gamma(I)$ \\
$\pi_\gamma$ & the orthogonal projection onto $\Im(\gamma)$, see \eqref{eq:orthogonal_projection} \\
$\reach$ & $\sup_{r>0}\left\{\forall x\in\bbR^D\setminus\Im(\gamma) \text{ s.t. } \dist{x}{\Im(\gamma)}<r \, \exists!z\in\Im(\gamma) \text{ s.t. } \dist{x}{z} = \dist{x}{\Im(\gamma)}\right\}.$ \\
$d_\gamma(v,v')$ & geodesic distance for $v,v'\in\Im(\gamma)$, extended by $d_{\gamma}(x,x') := d_{\gamma}(\pi_{\gamma}(x),\pi_{\gamma}(x'))$\\
$\CB_m(x,R)$ & ball of radius $R$ around a point $x$, with respect to a metric $m$ \\
$\kappa$ & bound for the curvature of $\gamma$, \emph{i.e.} $\kappa=\N{\gamma''}_\infty$ \\\hdashline
\multirow{2}{*}{$\Prperp,\, \Prpara$} & projections onto the tangent/normal space at $\overline{t}_{\CR}=\bbE[t|Y\in\CR]$ \\
& here $\Prpara = \gamma'(\overline{t}_{\CR})\gamma'(\overline{t}_{\CR})^\top$ and $\Prperp = \Id - \Prpara$ \\
\midrule
      \textsf{probability} & \\\midrule
      $(X, Y)$ & random vector in $\bbR^D\times \bbR$ with a distribution $\rho$,  and the marginal of $X$ is $\rho_X$\\
      $V, W$ & random vectors such that $X=V+W$, where $V=\pi_{\gamma}(X)\in\Im(\gamma)$ \\
     $\bbE X,\, \Covv{X}$&  the expectation and the covariance of a random variable $X$ \\
$\smean{X},\,\hat \Sigma$ & empirical mean and sample
covariance over \textit{all} samples\\
$\bbE[V|\CR],\, \Covv{X|\CR}$ & shorthand for conditional mean $\bbE[V|Y \in \CR]$ and conditional covariance $\Covv{X|Y \in \CR}$\\
$\lsmean{\CU}{X},\, \hat\Sigma_\CU $ & mean, and covariance,
over samples that belong to $\CU$;
\( \hat\bbE_\CU X = \frac{1}{\SN{\CU}} \sum_{X\in\CU} X\)    \\[0.5ex] \midrule
\textsf{constants} & \\\midrule
$L_f$ & the bi-Lipschitz constant $L_f$ of the function $g$, see \eqref{eq:g_bilipschitz} \\
$J$ & number of level sets, \emph{i.e.} the size of the partitioning of the data; $\CX=\cup_{j=1}^J \{\CX_j\}$, see \eqref{eq:dyadic_partitioning} \\
$\sigma_\varepsilon$ & bound on the noise term $\varepsilon$, \emph{i.e.}, $\SN{\varepsilon}\leq\sigma_\varepsilon$,  where $Y=f(X)+\varepsilon$, see \ref{ass:A5}\\
$C_W$ & constant in bounding influence of cross-covariance, see \ref{ass:A42} \\
$\minSigma$ & lower-bound for non-zero eigenvalues in directions normal to $\gamma$, see \ref{ass:A41} \\
$B$ & bound for $\dist{X}{\Im(\gamma)}$, see \ref{ass:A2} \\
$\uniformC$ & uniformity constant for the distribution along $\Im(\gamma)$, see \ref{ass:A3}
\\\bottomrule
\end{tabular}
\end{center}
\caption{Summary of the notation used in the paper}
\label{tab:Notation}
\end{table}


\section{Theoretical framework for the NSIM model}
\label{sec:model}

Due to the broadness of its scope, it is relatively easy to construct examples of
NSIM that fit the model but for which estimation from finite samples is not possible.
The goal in this section is to define a framework that allows a rigorous analysis,
yet is broad enough to encompass both the SIM and its nonlinear generalization NSIM.
In the following we describe the assumptions on the function class, the
{underlying nonlinearity} $\Im(\gamma)$, and on the distribution of the data set.

\paragraph{Regularity assumptions for $f$ and $\Im(\gamma)$.}
Let  $\gamma:\CI\rightarrow \bbR^D$, for an interval $\CI\subset \bbR$, be an arc-length parametrization of a simple, connected, and $\CC^2$ smooth curve, denoted $\Im(\gamma)=\gamma(\CI)$, and set $\kappa={\N{\gamma''}}_\infty<\infty$. We consider Lipschitz functions $f:\Omega\subset \bbR^D \rightarrow \bbR$ that satisfy $f(x) = g(\pi_{\gamma}(x))$ for some ${L_f}$-bi-Lipschitz function $g : \Im(\gamma)\rightarrow \bbR$, that is
\begin{equation}\label{eq:g_bilipschitz} L_f^{-1}d_\gamma(v,v') \leq \SN{g(v)- g(v')} \leq L_f d_\gamma(v,v'), \text{ for all } v,v'\in\Im(\gamma).\end{equation}
Through rescaling we can always assume $\Im(f) = [0,1]$.
We can, without loss of generality, align $\gamma$ with $\nabla f$, \emph{i.e.}, choose an orientation such that $\left\langle \nabla f(\gamma(t)), \gamma'(t)\right\rangle > 0$, for almost every $t\in\CI$.
An important quantity is the reach $\reach$ of $\Im(\gamma)$ - the largest $r>0$ such that any point at distance less than $r$ from $\Im(\gamma)$ has a unique nearest point on $\Im(\gamma)$ \cite{federer1959curvature}. This ensures that $\pi_\gamma(x)$, and thus $f(x)$, is well defined for all $x$ within the reach, \emph{i.e.}, all $x$ such that $\min_{z \in \Im(\gamma)}\N{x - z} < \reach$.
\paragraph{Distributional assumptions.}
We consider distributions $\rho$ for which the distribution of $X|Y \in \CR$ is absolutely continuous with respect to
the Lebesgue measure on $\Im(\Covv{X|\CR})$ for any non-empty
interval $\CR \subset [0,1]$, and which satisfy assumptions \ref{ass:A5} - \ref{ass:A3} below.

Assumptions \ref{ass:A5} - \ref{ass:A41} are related to single- and multi-index model literature
(or more broadly sufficient dimension reduction literature, see \cite{ma2013review} for a review), whereas \ref{ass:A2} - \ref{ass:A3} are related to manifold regression.
We begin by describing the behavior of the noise $\varepsilon$.
\begin{enumerate}[label=({A\arabic*})]
\setcounter{enumi}{0}
\item\label{ass:A5} For $\varepsilon := Y - \bbE[Y|X] = Y - g(\pi_{\gamma}(X))$, we assume
$\varepsilon \independent X | \pi_{\gamma}(X)$, and $\SN{\varepsilon}\leq \sigma_{\varepsilon}$ \emph{a.s.}.
\end{enumerate}
In sufficient dimension reduction problems, $\varepsilon \independent X | \pi_{\gamma}(X)$ is
often more commonly written $Y \independent X|\pi_{\gamma}(X)$.

The next assumption states that $\Im(\gamma)$ is centered {in the middle of the distribution}.
\begin{enumerate}[label=({A\arabic*})]
  \setcounter{enumi}{1}
 \item\label{ass:A1} $\bbE[X|\pi_{\gamma}(X)] = \pi_{\gamma}(X)$ holds $\pi_{\gamma}(X)$-\emph{a.s.}
\end{enumerate}
{This is inspired by the \emph{linear condition mean} assumption from single- and multi-index model literature, and is an integral component of every method based on inverse regression \cite{ma2012semiparametric,ma2013review}.
It is needed to ensure the recovery of a subspace of the index space
in the population regime $(N\rightarrow \infty)$, see \emph{e.g.} \cite{dennis2000save,li1991sliced},
and is often ensured by a stronger condition: if $X$ is elliptically distributed \cite{ma2012semiparametric}.
\ref{ass:A1} also implies identifiability of $\Im(\gamma)$ by the distribution of $(X,Y)$.}

\begin{lemma}
\label{lem:uniquely_identifiable}
Let $\CD, \CD' \subset \bbR^D$, with orthogonal projections $\pi_{D}, \pi_{D'}$ defined according to \eqref{eq:orthogonal_projection}, and let
$X$ be a random vector such that $\pi_{D}(X)$ and
$\pi_{D'}(X)$ are \emph{a.s.} unique.
Let $g:\CD \rightarrow \bbR$ and $g' : \CD' \rightarrow \bbR$ be measurable and injective.
If $f = g\circ\pi_{\CD} = g'\circ \pi_{\CD'}$, and $\bbE[X - \pi_{\CD}(X)|\pi_{\CD}(X)] =
\bbE[X - \pi_{\CD'}(X)|\pi_{\CD'}(X)]$ \emph{a.s.}, then $\pi_{\CD}(X) = \pi_{\CD'}(X)$ \emph{a.s.}.
\end{lemma}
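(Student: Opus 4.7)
The plan is to reduce the hypothesis, which conditions on two a~priori distinct $\sigma$-algebras, to a statement involving a single common conditioning, and then cancel. The whole argument hinges on the observation that the injectivity of $g$ and $g'$ forces $\pi_{\CD}(X)$ and $\pi_{\CD'}(X)$ to both be measurable functions of $f(X)$.

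First, I would observe that because $g$ is injective, the relation $f = g\circ\pi_{\CD}$ can be inverted set-theoretically: on $\Im(g)\supseteq\Im(f)$ there is a unique map $h:=g^{-1}$ with $\pi_{\CD}(X) = h(f(X))$ a.s., and analogously $\pi_{\CD'}(X) = h'(f(X))$ a.s.\ for $h':=(g')^{-1}$. Consequently, with $T:=f(X)$, one has
\begin{equation*}
\sigma\bigl(\pi_{\CD}(X)\bigr) \;=\; \sigma(T) \;=\; \sigma\bigl(\pi_{\CD'}(X)\bigr),
\end{equation*}
so conditioning on either projection is the same as conditioning on $f(X)$.

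Second, since $\pi_{\CD}(X)$ is $\sigma(T)$-measurable and integrable, the pull-out (take-out-what-is-known) property of conditional expectations yields
\begin{equation*}
\bbE[X-\pi_{\CD}(X)\mid \pi_{\CD}(X)] \;=\; \bbE[X\mid T] - \pi_{\CD}(X),
\end{equation*}
and the analogous identity holds for the primed version. Substituting into the hypothesis gives $\bbE[X\mid T] - \pi_{\CD}(X) = \bbE[X\mid T] - \pi_{\CD'}(X)$ a.s., from which cancellation immediately yields $\pi_{\CD}(X) = \pi_{\CD'}(X)$ a.s.

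The only real delicacy is the measurability of $h = g^{-1}$ and $h'=(g')^{-1}$, needed to make $h(T)$ a bona fide $\sigma(T)$-measurable random vector and thus legitimize the identification of $\sigma$-algebras in the first step. In the setting of the paper this is harmless: $\Im(\gamma)$ is a Borel subset of $\bbR^D$ and $g$ is a Lipschitz injection, so by standard results on Borel sections (e.g.\ Lusin–Souslin), $h$ is Borel on $\Im(g)$; analogously for $h'$. I therefore expect no substantive obstacle beyond this measurability remark, and the proof is essentially a one-line cancellation once the common conditioning $\sigma(T)$ is identified.
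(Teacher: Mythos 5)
Your proof is correct and takes essentially the same route as the paper's: both exploit the injectivity of $g$ and $g'$ to identify $\sigma(\pi_{\CD}(X)) = \sigma(f(X)) = \sigma(\pi_{\CD'}(X))$, then use the pull-out property of conditional expectation and cancel the common term $\bbE[X\mid f(X)]$. Your closing remark on the Borel measurability of $g^{-1}$ is a sensible precaution that the paper leaves implicit.
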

\begin{proof}
Due to the assumption we have
\begin{equation}
\label{eq:identification_lemma}
\pi_{\CD}(X) - \pi_{\CD'}(X) =  \bbE[X\lvert \pi_{\CD(X)}] - \bbE[X\lvert \pi_{\CD'(X)}], \text{ \emph{a.s.}}
\end{equation}
\noindent Since conditioning on an injective function of a random variable is {equivalent} with conditioning on the random variable itself, we get
\[
\bbE[X\lvert f(X)] = \bbE[X\lvert g(\pi_{\CD'}(X))] = \bbE[X\lvert \pi_{\CD'}(X)],
\]
and similarly for $\pi_{\CD}(X)$. Plugging into
\eqref{eq:identification_lemma} the claim follows.
\end{proof}

In the linear case \ref{ass:A5} and \ref{ass:A1}
imply $\Covv{PX, QX|\CR} = 0$ for any interval $\CR \subset [0,1]$, where $P$ is the orthoprojector onto the index space, and $Q = \Id - P$. For some
single- or multi-index model estimators this suffices to ensure the recovery of the index space in the population regime, see \emph{e.g.} \cite{li1991sliced}.
In the nonlinear case however, due to curvature we require an additional
assumption. Let $t := \gamma^{-1} \circ \pi_{\gamma}(X)\in \CI$ be the induced random variable and define the mean $\bar t_{\CR}:=\bbE[t|\CR]$,
the tangential projection $\Prpara := \gamma'(\bar t_{\CR})\gamma'(\bar t_{\CR})^\top$, and
the orthogonal projection $\Prperp := \Id - \Prpara$, see Figure \ref{fig:notation_graphic}.
Furthermore, let $\CS \subset \Im(\gamma)$ be the shortest connected segment with $\bbP(V \in \CS|\CR)=1$.
\begin{enumerate}[label=({A\arabic*})]
  \setcounter{enumi}{2}
\item\label{ass:A42} There exists an absolute constant $\sigmadiff>0$ such that $\N{\Covv{\Prperp X, \Prpara X\lvert \CR}}\leq \kappa \sigmadiff \SN{\CS}^2.$
\end{enumerate}
Due to other assumptions, \ref{ass:A42} trivially holds if $\SN{\CS}^2$ is replaced by $\SN{\CS}$, though we need more regularity.
Namely, our analysis shows that replacing $\SN{\CS}^2$ with $\SN{\CS}^{1+\alpha}$, for some $\alpha \geq 0$,
approximations of the tangent field are valid only if $\kappa \SN{\CS}^{\alpha}$ falls below a certain threshold.
Thus, for $\alpha = 0$ this restricts the analysis to only SIMs and curves with small curvature.
We select $\alpha=1$ for the sake of notational simplicity, though the results are valid for any $\alpha>0$.

\begin{figure}
\centering
\includegraphics[width=0.6\linewidth]{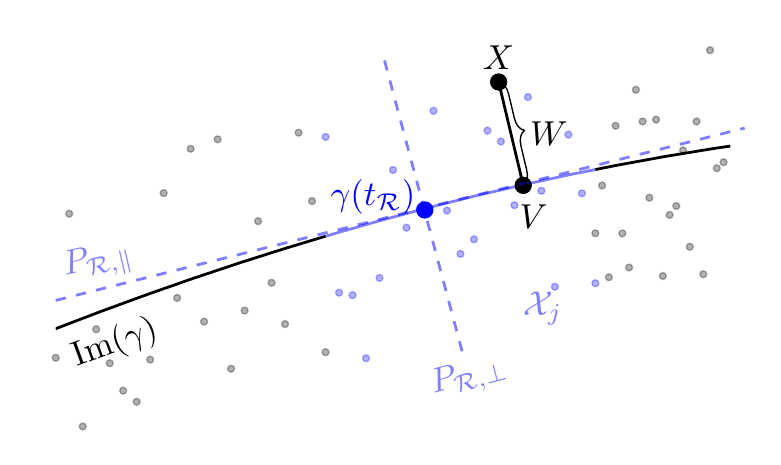}
\caption{A point $X$ can be represented by a component on $\Im(\gamma)$, given by $V:=\pi_{\gamma}(X)$, and a component orthogonal to $\Im(\gamma)$, given by $W:=X - \pi_{\gamma}(X)$. Each training sample belongs to a level set $\CX_j$ with \emph{midpoint} $\gamma(\bar t_{\CR})$, where $\bar t_{\CR} = \bbE[\gamma^{-1}\circ \pi_{\gamma}(X)|\CR]$, and each level set is associated with tangent projection $\Prpara = \gamma'(\bar t_{\CR})\gamma'(\bar t_{\CR})^\top$ and normal projection $\Prperp:=\Id - \Prpara.$}
\label{fig:notation_graphic}
\end{figure}

Our fourth assumption describes the behavior of $X$ orthogonal to the curve.
\begin{enumerate}[label=({A\arabic*})]
  \setcounter{enumi}{3}
  \item\label{ass:A41} For all $v \in \Im(\Covv{X|\CR}) \cap \Im(\Prperp)$, with $\N{v}=1$, we have \[v^\top \Covv{X|\CR} v > \minSigma > 0.\]
\end{enumerate}
In the nonlinear case an assumption of this form is necessary in order to ensure that
the solution of local linear regression aligns with the local tangent vector instead of the local curvature vector.
This is also observed numerically, where if the variance vanishes, as a function of $\CR$,
a vector close to a local curvature vector can minimize \eqref{eq:tangent_argmin}.
Such an assumption has also been used for multi index models, see \cite{dennis2000save,li2007directional,li2005contour}, though assuming \ref{ass:A5} and \ref{ass:A1}
would suffice in our case to ensure that the linear regression vector (for any conditioning on $\CR \subset [0,1]$) is contained in the index space.

The last two assumptions deal with properties of the distribution along the curve $\gamma$, denoted by $V$, and with components orthogonal to it, denoted by $W$.

\begin{enumerate}[label=({A\arabic*})]
\setcounter{enumi}{4}
\item\label{ass:A2}$W := X - \pi_{\gamma}(X)$, the component of $X$ orthogonal to $\Im(\gamma)$, satisfies $\N{W} \leq \boundnormal < \reach$, $W$-a.s.
\end{enumerate}
An assumption of this type is needed due to the fact that the projection $\pi_\gamma(X)$, and consequently the function $f$, is not always well defined for $\N{W}\geq\reach$.
In case of a straight line we have $\reach=\infty$, and thus there are no restrictions on $W$ (which reflects standard SIM assumptions).
On the other hand, \ref{ass:A2} is a relaxation of standard assumptions in manifold regression, which require samples $X$ to lie on, or very near the manifold, \emph{i.e.}, $\N{W}=0$ or $\N{W}\ll \reach$.

Lastly, we assume that the data distribution along the curve does not deviate too much from a uniform distribution.
This is used in manifold regression approaches that approximate the manifold by localization
and linearization, as it ensures that local pieces are sufficiently well covered, see e.g. \cite{liao2016learning}.
\begin{enumerate}[label=({A\arabic*})]
\setcounter{enumi}{5}
\item\label{ass:A3} For random vectors  $V := \pi_{\gamma}(X)\in\Im(\gamma)$ there exists $\uniformC\!>\!0$ such that $c_V^{-1}\SN{\CS}\SN{\CI}^{-1}\! <\! \bbP(V\! \in\! \CS)\! <\! \uniformC\SN{\CS}\SN{\CI}^{-1}$ holds for any $\CS\!\subset\!\Im(\gamma)$.
\end{enumerate}

A comparison of assumptions \ref{ass:A5}-\ref{ass:A3} with standard assumptions in the
literature, and their implication in case of the SIM, is provided in Table \ref{tab:NSIMvsAssumptions}.

\begin{table}[!htbp]
\begin{center}
\scriptsize
\begin{tabular}{@{}m{0.1cm}m{4.5cm}m{7.5cm}@{}}
    NSIM& implication on SIM  & comparable assumption in the literature \\ \midrule
    \ref{ass:A5} & $Y=f(a^\top x) + \varepsilon,\ \varepsilon\independent X\lvert a^\top X$ & the setting is often studied in SIM literature, e.g. in \cite{hristache2001direct,radchenko2015high}\\[1ex]\midrule
    \ref{ass:A1} & $\bbE[X| P X]=PX$ for $P=aa^\top$ & integral part for inverse regression based techniques, usually implied by ellipticity, e.g. \cite{li1991sliced,ma2012semiparametric,ma2013review}\\[1ex]\midrule
    \ref{ass:A42} & implied by \ref{ass:A5} and \ref{ass:A1}& - \\[1ex]\midrule
    \ref{ass:A41} & $v^\top\Covv{X\lvert \CR}v^\top>\minSigma$ for all $v\perp a$, $\N{v}=1$ & implied by the \emph{constant conditional covariance assumption} used sometimes for sufficient dimension reduction, \emph{e.g.} \cite{dennis2000save,li2007directional,li2005contour} \\[1ex]\midrule
    \ref{ass:A2} &  there exists $B>0$ such that $\N{X}\leq B<\infty$ & existing methods require $B=0$ to prove regression rates that do not depend exponentially on $D$, e.g. \cite{bickel2007local,kpotufe2011k}\\[1ex]\midrule
    \ref{ass:A3} & $a^\top X$ is absolutely continuous with respect to the Lebesgue measure on the image of $a^\top X$ & this is common to ensure that the manifold is covered well enough, e.g. \cite{liao2016learning} \\[1ex]
\bottomrule
\end{tabular}
\end{center}
\caption{Comparison of NSIM assumptions \ref{ass:A5}-\ref{ass:A3} with assumptions in SIM and manifold regression theory. Here $a$ denotes the (unit) index vector in SIM.
Assumptions \ref{ass:A5} - \ref{ass:A41}, are common in the study of linear sufficient dimension reduction, whereas \ref{ass:A2} - \ref{ass:A3} reflect the constraints imposed by the non-linearity of the setting, and are common in manifold regression problems.
We add though that \ref{ass:A2} is a significant relaxation of standard assumptions in manifold regression, which require $B=0$ or $B\ll\reach$.}
\label{tab:NSIMvsAssumptions}
\end{table}



\section{Learning localized index vectors}
\label{sec:geometry}
We now begin with the analysis of our estimator by providing guarantees for the estimation of
local index vectors in terms of $N$, the number of samples, and $J$, the number of level sets.
The estimation of local index vectors follows three steps:
\begin{enumerate}[label=\underline{\textsf{Step} \arabic*}, leftmargin=\widthof{[Step 3]}, topsep = 0pt, itemsep = 0ex]
\item\label{enum:partition} \textbf{Partition} $X$'s according to a dyadic partitioning of the range\footnote{Technically, we ought to use $\CR_1 = [-\sigma_{\varepsilon}, J^{-1}]$, and $\CR_J = [(J-1)/J, 1+\sigma_{\varepsilon}]$ to account for noise at the boundaries, but for the sake of simplicity we assume $Y$ is thresholded to $[0,1]$, such that $\SN{\CR_j} = J^{-1}$ for all $j$.} $\Im(f) = [0,1]$,
\begin{equation}
\label{eq:dyadic_partitioning}
\text{let } \CR_{j}:=\left[\frac{j-1}{J},\frac{j}{J}\right] \text{ and define } \CY_j := \CY \cap \CR_{j},\text{ and } \CX_j := \left\{X_i \in \CX: Y_i \in \CY_j\right\}.
\end{equation}
\item\label{enum:regularisation} \textbf{Estimate local index vectors} with $\hat a_j : = \hat b_j/\Vert \hat b_j \Vert$, where $\hat b_j$ is the solution of (local) linear regression  for samples $\CX_j,\,\CY_j$,
\begin{equation}\label{eq:hatbj_geom}\hat b_j := \hat \Sigma_j^{\dagger}\,\lsmean{(\CX_j,\CY_j)}{{(Y- \lsmean{\CY_j}{Y})(X - \lsmean{\CX_j}{X})}}.\end{equation}
\item\label{enum:extension} \textbf{Assign index vectors} to samples $\{X_i : i \in [N]\}$ by setting $\hat a(X_i) := \hat a_{j(X_i)}$.\\
\end{enumerate}

Denote now the tangent vectors by $a(X) := \gamma'(t)$ and $a_j := \gamma'(\bbE[t |\CR_j])$, where $t = \gamma^{-1}\circ \pi_{\gamma}(X)$.
Because of the quantization in \ref{enum:extension}, index vector estimation error can be decomposed as
\begin{equation}
\label{eq:decomposition_tangent_error}
\N{\hat{a}(X_i) - a(X_i)} \leq \N{\hat{a}_{j(X_i)} - a_{j(X_i)}} + \N{a_{j(X_i)} - a(X_i)} \leq
\N{\hat{a}_{j(X_i)} - a_{j(X_i)}} + \kappa_j\len{\CS_j},
\end{equation}
where $\CS_j$ is the infimum of all connected pieces
of $\Im(\gamma)$ such that $\bbP(V \in \CS_j|\CR_j) = 1$, and $\kappa_j$ its curvature bound.
Since $\SN{\CS_j} \lesssim L_f \SN{\CR_j}$ as long as $J^{-1} = \SN{\CR_j} \gg \sigma_{\varepsilon}$ (by Lemma \ref{lem:bound_curve_segment}), the second term
can be improved by increasing the number of level sets $J$.
On the other hand, for the first term we can prove the following concentration bound.

\begin{theorem}
\label{thm:midpoint_estimation}
Let $J \in \bbN$, $j \in [J]$, and $u > 1$. Define $\minSigmaYcool{j} := \Var{a_j^\top X, Y|\CR_j}(\SN{\CS_j}\SN{\CR_j})^{-1  }$.
Provided  Assumptions \ref{ass:A5} - \ref{ass:A2} hold, there exist constants  $C_N, C_A, C_E>0$, depending polynomially on
$L_f$, $\curvtor_j$, $B$, $\lengamma$, $\sigmadiff^* = (\sigmadiff \vee 3L_f \minSigmaYcool{j}\SN{\CR_j})$, $\sigma_{j,Y}^{-1}$, $\sigma_\perp^{-1}$,
such that whenever
\begin{equation}
\label{eq:thm3_conds} 4\sigma_{\varepsilon} < J^{-1} < \left(\frac{2}{3}\right)^{3/2}\frac{\minSigmaYcool{j} \sqrt{\sigma_{\perp}}}{L_f\curvtor_j \sigmadiff^*},
\quad
 \text{ and }\quad  \SN{\CX_j} \geq \max\{C_N (\log(D) + u)^2,D\},\end{equation}
we have
\begin{align}
\label{eq:thm_3_equation}
\bbP\left(\N{\hat a_j - a_j} \leq  C_A\frac{{\kappa_j}}{J^2}
+  {C_E}\frac{\log(D) + u}{\sqrt{\SN{\CX_j}}J}\right) \geq 1-\exp(u).
\end{align}
\end{theorem}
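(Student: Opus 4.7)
The plan is to split the error via the triangle inequality as
\[
\N{\hat a_j - a_j} \leq \N{\hat a_j - a_j^\star} + \N{a_j^\star - a_j},
\]
where $a_j^\star := b_j^\star/\N{b_j^\star}$ is the population analog of $\hat a_j$ obtained by substituting $\Sigma_j := \Covv{X|\CR_j}$ and $c_j^\star := \Covv{X, Y|\CR_j}$ into the definition \eqref{eq:hatbj_geom}. The first summand captures stochastic fluctuations and will be controlled by matrix and vector Bernstein inequalities, producing the $(\log(D)+u)/(\sqrt{\SN{\CX_j}}J)$ piece. The second is a purely deterministic geometric bias and should produce the $\kappa_j/J^2$ piece.

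For the bias term I would first reduce the cross-moment to the curve. Writing $X=V+W$ with $V=\pi_\gamma(X)$, and using \ref{ass:A1} together with $\varepsilon\independent X|V$ from \ref{ass:A5}, a tower argument gives $\bbE[W|V,Y\in\CR_j]=0$, so that $\Covv{W,Y|\CR_j}=0$ and $c_j^\star = \Covv{V, Y|\CR_j}$. Using the $C^2$-Taylor expansion of $V$ around $\gamma(\bar t_{\CR_j})$ and the length bound $\SN{\CS_j}\lesssim L_f/J$ (valid under $4\sigma_\varepsilon<J^{-1}$), one obtains
\[
c_j^\star = \minSigmaYcool{j}\,\SN{\CS_j}\SN{\CR_j}\, a_j + \eta_j, \qquad \eta_j\perp a_j,\qquad \N{\eta_j}\lesssim \kappa_j L_f^2\SN{\CS_j}^2\SN{\CR_j}.
\]
For $\Sigma_j$, assumption \ref{ass:A41} yields normal eigenvalues $\geq \minSigma$, the Taylor expansion gives $a_j^\top \Sigma_j a_j \asymp \SN{\CS_j}^2$ in the tangent direction, and \ref{ass:A42} controls the tangent/normal cross-block by $\kappa_j\sigmadiff\SN{\CS_j}^2$. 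A $2\times 2$ block Schur-complement inversion of $\Sigma_j^\dagger$ then yields $b_j^\star = \alpha_j a_j + \eta_j^\star$ with $\alpha_j = \Theta(1)$ and $\N{\eta_j^\star}\lesssim \kappa_j/J$, so that normalization gives $\N{a_j^\star - a_j}\lesssim \kappa_j/J^2$.

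For the variance term I would apply matrix Bernstein to $\hat\Sigma_j - \Sigma_j$ and vector Bernstein to $\hat c_j - c_j^\star$, conditionally on $\SN{\CX_j}$. Once the level-set assignment is fixed, the points in $\CX_j$ are iid from the conditional distribution, and \ref{ass:A2} bounds the sub-Gaussian norms uniformly by a polynomial in $B$ and $\sigma_\varepsilon$, giving fluctuations of order $\sqrt{(\log D + u)/\SN{\CX_j}}$ on their natural scales. Under the sample-size condition $\SN{\CX_j}\geq C_N(\log D + u)^2$ the non-zero spectrum of $\Sigma_j$ is preserved up to a constant factor, and a pseudoinverse perturbation estimate, combined with the near-tangentiality of $c_j^\star$ (so that the $\Sigma_j^\dagger$-action only amplifies the perturbation by $J^2$ in the direction where $c_j^\star$ has size $\SN{\CS_j}\SN{\CR_j}\asymp J^{-2}$), yields $\N{\hat b_j - b_j^\star}\lesssim (\log D + u)/(\sqrt{\SN{\CX_j}}\,J)$. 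Since $\N{b_j^\star}=\Theta(1)$, normalization transfers this bound directly to $\N{\hat a_j - a_j^\star}$.

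The main obstacle is precisely this last step: the tangent-direction eigenvalue of $\Sigma_j$ is $\Theta(J^{-2})$, and a naive operator-norm bound $\N{\hat\Sigma_j^\dagger - \Sigma_j^\dagger}\lesssim \N{\hat\Sigma_j-\Sigma_j}\N{\Sigma_j^\dagger}^2$ would inject a $J^4$ factor and break the target rate. Circumventing this requires tracking the tangent/normal block structure of both $\Sigma_j$ and $c_j^\star$ separately: the $\Sigma_j^\dagger$-action amplifies by $J^2$ only in the tangent direction (where $c_j^\star$ is already of order $J^{-2}$) and by $\minSigma^{-1}$ in the normal directions (where $c_j^\star$ has only an $O(\kappa_j/J^2)$ component). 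Combined with the noise-scale restriction $4\sigma_\varepsilon<J^{-1}$ from \eqref{eq:thm3_conds}, which keeps conditioning on $Y\in\CR_j$ within a constant factor of conditioning on $t$ lying in a segment of length $\asymp\SN{\CS_j}$, this balance delivers the final $J^{-1}$ variance rate and constitutes the technically delicate part of the argument.
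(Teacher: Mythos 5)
Your high-level strategy is close to the paper's: you decompose $\hat b = \hat\Sigma^\dagger\hat r$ into tangential and normal blocks relative to $a_j$, use a Schur-complement inversion of $\Sigma_j^\dagger$ and matrix Bernstein, and correctly flag the $\Theta(J^{-2})$ tangent eigenvalue as the danger point. The bias analysis matches the paper's Lemmas on $\Sigma^\dagger$ and $r=\Covv{X,Y|\CR_j}$ (modulo an arithmetic slip: if $\alpha_j=\Theta(1)$ then $\N{\eta_j^\star}\lesssim\kappa_j/J$ gives $\N{a_j^\star-a_j}\lesssim\kappa_j/J$, not $\kappa_j/J^2$; the correct intermediate is $\N{\eta_j^\star}\lesssim\kappa_j/J^2$, which does follow from the directional bounds).

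There is, however, a genuine gap in the variance step, and it is exactly at the point you identify as ``technically delicate.'' You claim that the block-structured perturbation argument yields $\N{\hat b_j - b_j^\star}\lesssim (\log D + u)/(\sqrt{\SN{\CX_j}}\,J)$, and then say normalization transfers this bound to $\N{\hat a_j - a_j^\star}$. But the $J^{-1}$ factor does not survive in the full vector norm $\N{\hat b_j - b_j^\star}$: the tangential component $\N{P(\hat b_j - b_j^\star)}$ involves e.g.\ $\N{P\Sigma_j^\dagger P}\cdot\N{P(\hat r_j - r_j)}\sim J^2\cdot J^{-2}/\sqrt{\SN{\CX_j}} = 1/\sqrt{\SN{\CX_j}}$, without the extra $J^{-1}$. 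Only the \emph{normal} component $\N{Q(\hat b_j - b_j^\star)}$ carries the $J^{-1}\asymp\SN{\CR_j}$ factor. So the crude inequality $\N{\hat a_j - a_j^\star}\lesssim\N{\hat b_j - b_j^\star}/\N{b_j^\star}$ that ``normalization transfers this bound directly'' invokes would give $(\log D + u)/\sqrt{\SN{\CX_j}}$ and miss the target rate. The resolution, which is the actual crux of the paper's proof, is that normalization filters out the tangential perturbation: one uses the exact geometric identity
\[
\N{\hat a_j - a_j}\le \sqrt{2}\,\frac{\N{Q b_j} + \N{Q(\hat b_j - b_j)}}{\N{P b_j} - \N{P(\hat b_j - b_j)}},
\]
so the tangential perturbation only enters the denominator, where it just needs to stay a constant factor away from $\N{Pb_j}=\Theta(1)$ (this is the role of the sample-size condition $\SN{\CX_j}\gtrsim(\log D + u)^2$), while the numerator depends only on the $Q$-components, which do enjoy the extra $\ssler\asymp J^{-1}$ factor. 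Without this observation your argument, as stated, loses a factor of $J$ in the variance term.
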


\noindent
The first condition in \eqref{eq:thm3_conds} deals with linearization, and effectively bounds the
influence of the cross-covariance term $\N{\Covv{\Prperp X,\Prpara X\lvert \CR}}$ from \ref{ass:A42}.
The condition gets easier to satisfy for shorter $\CS_j$, or shorter $\CR_j$.
This goes in line with the discussion in Section \ref{sec:model}, since by isolating shorter segments of $\Im(\gamma)$, NSIM approaches the SIM,
where the condition in \eqref{eq:thm3_conds}, and assumption \ref{ass:A41}, are trivally satisfied.
For a weaker form of \ref{ass:A42}, namely $\N{\Covv{\Prperp X,\Prpara X\lvert \CR}} \leq \curvtor_j \sigmadiff \CS_j^{1+\alpha}$, we obtain the same result with $J^{-\alpha}$ replacing $J^{-1}$,
and $J^{-(1+\alpha)}$ replacing $J^{-2}$, see Theorem \ref{thm:midpoint_estimation_restated}.

The second condition in \eqref{eq:thm3_conds} implies that, locally, there is a
minimal number of samples needed to ensure that the norm of the linear regression solution $\Vert\hat b_j\Vert$ is bounded from below.

Lastly, we note that $C_N, C_A, C_E$ are proportional to powers of $\sigma_{j,Y}^{-1}$, which implies that they are uniformly upper bounded (independent of $j$) if $\minSigmaYcool{j}$ is uniformly bounded from below.
We show in Lemma \ref{lem:covariance_AXY} in the Appendix that this is indeed the case whenever $\SN{\CR_j} \gg \sigma_{\varepsilon}$ and $\textrm{Var}(a_j^\top X, f(X) | \CR_j) (\SN{\CS_j}\SN{\CR_j})^{-1}$ is bounded from below.
The latter is satisfied if for example $f\in\CC^2$, see Lemma \ref{lem:covariance_AXY}.
Due to the bi-Lipschitz property of $g$, it seems reasonable however that $\textrm{Var}(a_j^\top X, f(X) | \CR_j) (\SN{\CS_j}\SN{\CR_j})^{-1}$ is bounded from below in more general scenarios.
The requirement $\SN{\CR_j} \gg \sigma_{\varepsilon}$, on the other hand, is also  observed numerically, precisely because $\textrm{Var}(a_j^\top X, Y | \CR_j)$
vanishes as soon as $\SN{\CR_j} -\sigma_{\varepsilon}$ is small. This suggests that our analysis correctly identifies
the dependency on $\minSigmaYcool{j}$.

\begin{remark}[Special cases of Theorem \ref{thm:midpoint_estimation}]\hfill\\
\vspace{-10pt}
\begin{enumerate}[topsep=-40pt,itemsep=5pt,parsep=0pt,partopsep=-20pt, leftmargin = \widthof{\,\,\,\underline{$K_{\CR} = 0$}}]
\item[\underline{$\sigma_{\varepsilon} = 0$}:] In the noise-free case the lower bound for $J^{-1}$ is removed. Thus, provided $\SN{\CX_j}$ is kept constant and $J\asymp N$, we achieve $\Vert \hat a_j - a_j\Vert \asymp N^{-1}$. This proves a $N^{-1}$ rate for the estimation of the (local) index vector
with the ordinary least squares estimator for strictly monotonic link functions.
\item[\underline{$\curvtor_{j} = 0$}:] If $\CR_j$ corresponds to a flat piece of the curve the first term in \eqref{eq:thm_3_equation} vanishes.
Thus, $\hat a_j$ is an unbiased estimator of $a_j$, with convergence rate $\SN{N}^{-1/2}$, provided $J$ is kept constant and $N\asymp \SN{\CX_j}$.
This result covers the SIM, and our estimation rate matches  other results \cite{balabdaoui2019score,brillinger2012generalized,hristache2001direct}.
\end{enumerate}
\end{remark}

Recalling decomposition \eqref{eq:decomposition_tangent_error}, Theorem \ref{thm:midpoint_estimation} can now be used to bound
$\N{\hat a(X_i) - a(X_i)}$ for all $i \in [N]$ by invoking a union bound argument
over all level sets $\CR_j$, $j \in [J]$.

\begin{corollary}
\label{cor:tangent_bound_all}
Let Assumptions \ref{ass:A5} - \ref{ass:A3} hold. Let $u > 1$ and assume we have $N$ iid. copies of $(X,Y)$.
Assume we partition the data set into $J$ partitions according to \eqref{eq:dyadic_partitioning}, so that
\begin{equation}\label{eqn:global_otherconds}
4\sigma_{\varepsilon} < \frac{1}{J} <  \left(\frac{2}{3}\right)^{3/2}\frac{\minSigmaYcool{J} \sqrt{\sigma_{\perp}}}{L_f\curvtor \sigmadiff^*},\quad
\textrm{where } \minSigmaYcool{J} := \max_{j \in [J]}\frac{\Var{a^\top X, Y|\CR_j}}{\SN{\mathcal{S}_j}\SN{\CR_j}},
\end{equation}
and $\sigmadiff^* := (\sigmadiff \vee 3L_f \minSigmaYcool{J} J^{-1})$, and compute local index vectors $\{\hat a_j : j \in [J]\}$.
There exist constants $C_N, C_A, C_E>0$, depending polynomially on $L_f$, $\curvtor$, $B$, $\lengamma$, $\sigmadiff$, $\sigma_{J,Y}^{-1}$, $\sigma_{\perp}^{-1}$,
such that if
\begin{equation}
\label{eqn:global_Nbound}
N \geq C_N\max\{(\log(D) + \log(J)u)^2,D\}u J,
\end{equation}
we have
\begin{align}
\label{eq:tangent_bound_all}
\bbP\left(\max_{i \in [N]}\N{\hat a(X_i) - a(X_i)} \leq  C_A \frac{\curvtor}{J}
+   C_E\frac{\log(D)u + \log(J)u^2}{\sqrt{N J}}\right) \geq 1-\exp(u).
\end{align}
\end{corollary}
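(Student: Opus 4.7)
The plan is to apply Theorem \ref{thm:midpoint_estimation} separately to each level set $j\in[J]$, take a union bound over $j$, and combine with the deterministic decomposition \eqref{eq:decomposition_tangent_error}. The principal new ingredient is a uniform lower bound on the slice sizes $\SN{\CX_j}$, which also dictates the sample-size requirement \eqref{eqn:global_Nbound}.

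First I would invoke \eqref{eq:decomposition_tangent_error} to split
\[\max_{i\in[N]}\N{\hat a(X_i)-a(X_i)}\leq \max_{j\in[J]}\N{\hat a_j-a_j}+\max_{j\in[J]}\kappa_j\SN{\CS_j}.\]
The second term is purely deterministic. Since \eqref{eqn:global_otherconds} enforces $1/J\gg\sigma_\varepsilon$, the preliminary lemma cited in the discussion after \eqref{eq:decomposition_tangent_error} gives $\SN{\CS_j}\lesssim L_f\SN{\CR_j}=L_f/J$, contributing order $\kappa L_f/J$. This already produces the $C_A\kappa/J$ term in \eqref{eq:tangent_bound_all}, and it dominates the weaker $\kappa_j/J^2$ bias arising from the slicewise bound of Theorem \ref{thm:midpoint_estimation}, which can therefore be absorbed.

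For the first term I would apply Theorem \ref{thm:midpoint_estimation} to each $j\in[J]$ at inflated confidence level $u_j:=u+\log J$, so that a union bound yields a total failure probability of order $\exp(-u)$. The condition on $1/J$ in \eqref{eq:thm3_conds} is implied uniformly in $j$ by \eqref{eqn:global_otherconds}, since $\minSigmaYcool{J}$ is defined as the slicewise worst case and $\sigmadiff^*$ uses the same maximum, which is precisely the form making the slicewise hypotheses uniformly implied. The remaining hypothesis of the theorem, $\SN{\CX_j}\geq\max\{C_N(\log D+u_j)^2,D\}$, is the obstacle and must be arranged probabilistically.

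To meet it I would combine assumption \ref{ass:A3} with the bi-Lipschitz property \eqref{eq:g_bilipschitz} of $g$ and the noise bound $\sigma_\varepsilon\ll 1/J$ to obtain the two-sided estimate $p_j:=\bbP(Y\in\CR_j)\in[c_1/J,\,c_2/J]$, for constants depending only on $\uniformC$ and $L_f$. Since $\SN{\CX_j}\sim\mathrm{Binomial}(N,p_j)$ conditionally, a multiplicative Chernoff bound gives $\SN{\CX_j}\geq p_jN/2\gtrsim N/J$ with probability $1-\exp(-cN/J)$ per slice, and a union bound over $j\in[J]$ requires $N/J\gtrsim u+\log J$, which is exactly the scaling in \eqref{eqn:global_Nbound}. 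Substituting $\SN{\CX_j}\gtrsim N/J$ and $u_j=u+\log J$ into the variance term $(\log D+u_j)/(\sqrt{\SN{\CX_j}}\,J)$ of Theorem \ref{thm:midpoint_estimation} yields a bound of order $(\log(D)u+\log(J)u^2)/\sqrt{NJ}$, matching the second term of \eqref{eq:tangent_bound_all}. The hard part is precisely this transfer of regularity from $V=\pi_\gamma(X)$ to $Y$: obtaining the two-sided $1/J$ estimate for $p_j$ via bi-Lipschitz monotonicity in the presence of noise, and ensuring that the concentration holds uniformly in $j$, which is what produces the extra $\log J$ factor in the variance term and the $J\log J$ sample requirement. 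The rest of the argument is mechanical bookkeeping to check that the slicewise hypotheses of Theorem \ref{thm:midpoint_estimation} are implied by the global hypotheses \eqref{eqn:global_otherconds}--\eqref{eqn:global_Nbound}.
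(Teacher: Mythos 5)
Your proof is correct and follows the same skeleton as the paper's argument: decompose via \eqref{eq:decomposition_tangent_error}, apply Theorem~\ref{thm:midpoint_estimation} to each slice with an inflated confidence parameter, union-bound over $j\in[J]$, and establish a uniform lower bound on $\SN{\CX_j}$ that feeds both the sample-size requirement and the final rate. The genuine difference is in how that lower bound on $\SN{\CX_j}$ is obtained. The paper proves Lemma~\ref{lem:net_property}, which shows that the projected samples $\{\pi_\gamma(X_i)\}$ form a $\delta$-net on $\Im(\gamma)$ with respect to the geodesic metric, with $\delta\asymp\lengamma u/(\uniformC N)$, and then reads off $\min_j\SN{\CX_j}\gtrsim N/(uJ)$ from the net property because each segment $\CS_j^-$ (of length $\gtrsim 1/(L_f J)$) must then contain proportionally many net points. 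You instead establish the two-sided slice-probability estimate $p_j=\bbP(Y\in\CR_j)\asymp 1/J$ (using \ref{ass:A3}, bi-Lipschitz monotonicity, and $\sigma_\varepsilon\ll 1/J$, exactly as the paper uses these tools inside Lemma~\ref{lem:bound_curve_segment} and Lemma~\ref{lem:net_property}) and apply a per-slice multiplicative Chernoff bound to $\SN{\CX_j}\sim\mathrm{Binomial}(N,p_j)$, with a union bound supplying the $\log J$ overhead. Both routes land on the same $N\gtrsim J\cdot\mathrm{polylog}$ requirement and the same $(\log D+u+\log J)/\sqrt{NJ}$ variance scaling (which is dominated by the corollary's stated $(\log(D)u+\log(J)u^2)/\sqrt{NJ}$ since $u>1$), so the quantitative conclusions match. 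Your Chernoff argument is the more elementary and standard of the two and is entirely adequate here; what the paper's net lemma buys, by comparison, is that the same event (the $\delta$-net property) is reused verbatim in Theorem~\ref{thm:guarantees_f_general} to control the geodesic nearest-neighbor distance, so it earns double duty. Also note, purely for bookkeeping, that only the lower bound $p_j\gtrsim 1/J$ is actually required for the Chernoff step; the upper bound is correct but unused, and your per-slice confidence choice $u_j=u+\log J$ is slightly sharper than the paper's $u\log J$ but equally valid under the running assumption $u>1$.
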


\noindent
Let us make two remarks. First, terms in the bound on the right hand side of \eqref{eq:tangent_bound_all} can also be written in a local form,
\emph{i.e.}, a global curvature  bound can be replaced with a curvature bound for a segment around the sample $\pi_{\gamma}(X_i)$.
Thus, the learning of local index index vectors is consistent on locally linear pieces.

{Second, \eqref{eqn:global_otherconds} and \eqref{eqn:global_Nbound}  suggest that to optimally balance bias and variance we ought to use $J= C\min\{N/\log^2(N), \sigma_{\varepsilon}^{-1}\}$ level sets,
where $C>0$ is small enough so that \eqref{eqn:global_Nbound} is satisfied.
Looking at \eqref{eq:tangent_bound_all}, this implies that there are two regimes.

In the first regime, in order to decrease the error we ought to increase $J$ as long as $J \gg \sigma_{\varepsilon}$, \emph{i.e.}, subdivide the data set into an increasing number of subsets, while keeping the number of samples within each subset roughly constant.
The rationale behind this is that further subdividing the data set not only reduces the approximation error (which is caused by the curvature), but it also reduces the variance in the linear regression part of the problem, \emph{i.e.}, when estimating $a_{j(X_i)}$ by $\hat{a}_{j(X_i)}$.
In the second regime function noise precludes further decreasing $\SN{\CR_j}$, since we cannot further decrease $\SN{\CS_j}$.
In other words, the noise level $\sigma_\varepsilon$ imposes a lower bound on $\SN{\CR_j}$, and the bias does not completely vanish.

Note also that in \eqref{eq:tangent_bound_all}, compared to \eqref{eq:thm_3_equation}, we lose an order in $J^{-1}$, \emph{i.e.}, in the interval length.
This is due to the use of quantization to approximate the entire tangent field over the respective level set.
This could be improved by learning a separate tangent for each sample $X_i$ from a level set centred around $X_i$, but the second term in \eqref{eq:tangent_bound_all} prohibits achieving $J^{-2}$ overall.


\section{Function estimation}
\label{sec:function_estimation}

\begin{algorithm}[b!]
\caption{Modified out-of-sample prediction for NSIM estimator}
\label{alg:main_modified}
\vspace{0.2cm}
\textbf{Out-of-sample prediction}
\begin{algorithmic}
  \REQUIRE sample $x$, data set $\{(X_i,Y_i) : i \in [N]\}$ with $\{\hat a(X_i) : i \in [N]\}$, second data set $\{(X_{\ell}',Y_{\ell}') : \ell \in [N]\}$, parameters $k$, $\tempradius$
  \vspace{0.2cm}
  \STATE For all $\ell \in [N]$: $\hat a(X_{\ell}') := \hat a(X_{i^*})$ where $i^* := \argmin_{i \in [N]} \intMetMod{X_{\ell}'}{X_i}{\tempradius})$
  \STATE Compute nearest neighbor ordering $1(x),\ldots,k(x)$ over $\{(X_{\ell}',Y_{\ell}') : \ell \in [N]\}$ based on $\intMetMod{x}{X_{\ell}'}{\tempradius}$.
\ENSURE  $\hat{f}_{k}(x) = k^{-1}\sum_{\ell = 1}^{k}Y_{\ell(x)}'$
\end{algorithmic}
\end{algorithm}

\noindent
In this section we use the guarantees on local index vector estimation to establish
function estimation guarantees.
We recall that the estimator \eqref{eq:prediction} predicts an output by averaging the responses of  $\{(X_{i(x)}, Y_{i(x)}) : i \in [k])\}$, the $k$ closest samples with respect to the metric $\intMetMod{x}{\cdot}{\tempradius}$. This makes the analysis challenging because the same data is used twice: first for estimating the geometry and then for predicting the function.
As a result, random variables $\left\{\varepsilon_{i(x)} : i \in [k]\right\}$ become statistically dependent
and their finite sample average could be biased.

To avoid this technical issue split the given data set (consisting of $2N$ samples) in two halves (reducing the effective sample size only by a factor of $1/2$) and use the first half, $\{(X_{i},Y_{i}) : i \in [N]\}$, for approximating the geometry, and the second half,  $\{(X'_{\ell},Y_{\ell}') : \ell \in [N]\}$, for function prediction.
We then extend the tangent field approximation through nearest neighbors, defining $\hat a(X_\ell') :=\hat a(X_{i^*})$, where $i^* := \argmin_{i\in[N]} \intMetMod{X_{\ell}'}{X_i}{\tempradius}$.
The prediction of $f(x)$ is then given by averaging the responses, $Y'_{\ell(x)},\, \ell \in [k]$, of $k$ closest samples  with respect to $\intMetMod{x}{\cdot}{\tempradius}$ from
$\{X_\ell' : \ell \in [N]\}$, see Algorithm \ref{alg:main_modified}.
Thus, random variables $\varepsilon_{\ell}'$ are not used in the selection
of $k$ closest neighbors of $x$ and we preserve unbiased finite sample averages,
\emph{i.e.}, $\bbE \varepsilon_{\ell(x)}'  = \bbE \varepsilon = 0$.

We split our analysis in two parts. The first concerns the case when $\gamma$ is close to an
affine space (see Definition \ref{def:almost_linearity}),  and we call it a perturbed single index model.
The second part extends the analysis to general curves $\gamma$. The reason for treating the first case separately
is that we can achieve theoretical guarantees even without restricting the search space of nearest neighbors, \emph{i.e.} setting $\eta = \infty$.
Furthermore, numerical experiments in Section \ref{sec:experiments} suggest that perturbed SIMs
fit well to several data sets that were previously used as benchmarks for the SIM.

\subsection{Function estimation for perturbed single index models}
\label{subsec:function_estimation}
We begin by defining the notion of almost linearity that is used to quantify the deviation
of the true model to an ordinary SIM, respectively, of the curve $\gamma$ to a straight line.

\begin{definition}
\label{def:almost_linearity}
Let $\FI$ be an interval and $\gamma : \FI \rightarrow \bbR^{D}$ an arc-length parametrized $\CC^1(\FI)$ curve. Let $0 < \theta \leq 1$.
We say $\gamma$ is \emph{$\theta$-almost linear} if $\EUSP{\gamma'(t)}{\gamma'(s)} > \theta$ for all $t, s \in \FI$.
\end{definition}
Definition \ref{def:almost_linearity} implies that if $\theta$ is close to $1$ then $\gamma$ is close to a
straight line. Furthermore, the Euclidean distance approximates the geodesic distance well, \emph{i.e.} $\N{v - v'} \asymp d_{\gamma}(v, v')$ for any $v, v' \in \Im(\gamma)$,
which allows to prove an equivalence between the (unrestricted) proxy metric
$\intMetMod{x}{\cdot}{\infty}$ and $d_{\gamma}(x,\cdot)$.

\begin{proposition}
\label{prop:intrinsic_bound_for_approximated_closest_neighbors}
Assume $\gamma$ is $\theta$-almost linear for some $\theta > \kappa \boundnormal$.
Let $\{\bar x_i : i \in [N]\} \subset \suppmarg$, and $\{\hat a(\bar x_i) : i \in [N]\}\subset\bbS^{D-1}$ be arbitrary sets.
Let $x\in\suppmarg$. If $\bar x_{k(x)}$ is $k$-th closest sample, based
on $\intMetMod{x}{\cdot}{\infty}$, and $\bar x_{k^*(x)}$ the $k$-closest sample, based on $d_{\gamma}(x,\cdot)$,
we have
\begin{equation}
\label{eqn:bound_on_approximated_and_true_ball}
d_{\gamma}(x,\bar x_{k(x)}) \leq \frac{2\vee (\SN{\CI} + 2B)}{\theta - \kappa \boundnormal}\LRP{d_{\gamma}(x, \bar x_{k^*(x)})
+  \max\limits_{i \in [N]} \N{\hat a(\bar x_i) - a(\bar x_i)}}.
\end{equation}
\end{proposition}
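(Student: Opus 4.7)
The strategy is to derive a bilateral comparison between the proxy quantity $\intMetMod{x}{\bar x_i}{\infty}=\SN{\hat a(\bar x_i)^\top(x-\bar x_i)}$ and the geodesic distance $d_\gamma(x,\bar x_i)$, valid for every sample, and then to run a standard nearest-neighbor interchange argument.

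The geometric heart of the argument is the following decomposition. Writing $v=\pi_\gamma(x)=\gamma(s)$, $v'=\pi_\gamma(\bar x_i)=\gamma(t)$, $w=x-v$ and $w'=\bar x_i-v'$, and exploiting the orthogonalities $w\perp\gamma'(s)$ and $w'\perp\gamma'(t)$ built into the definition of $\pi_\gamma$, we obtain
\[
a(\bar x_i)^\top(x-\bar x_i)\;=\;\gamma'(t)^\top(v-v')\;+\;(\gamma'(t)-\gamma'(s))^\top w.
\]
The first term equals $\int_t^s\gamma'(t)^\top\gamma'(r)\,dr$, whose modulus lies between $\theta\SN{s-t}=\theta\,d_\gamma(x,\bar x_i)$ and $\SN{s-t}=d_\gamma(x,\bar x_i)$ by $\theta$-almost linearity. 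The second term is controlled by $\N{\gamma'(t)-\gamma'(s)}\,\N{w}\leq\kappa B\,d_\gamma(x,\bar x_i)$, using $\kappa=\N{\gamma''}_\infty$, arc-length parametrization, and \ref{ass:A2}. Combining these,
\[
(\theta-\kappa B)\,d_\gamma(x,\bar x_i)\;\leq\;\SN{a(\bar x_i)^\top(x-\bar x_i)}\;\leq\;(1+\kappa B)\,d_\gamma(x,\bar x_i),
\]
with the lower constant strictly positive by the hypothesis $\theta>\kappa B$.

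Setting $E:=\max_{i\in[N]}\N{\hat a(\bar x_i)-a(\bar x_i)}$ and using Cauchy--Schwarz together with the crude bound $\N{x-\bar x_i}\leq\SN{\CI}+2B$, the above transfers to
\[
(\theta-\kappa B)\,d_\gamma(x,\bar x_i)-E(\SN{\CI}+2B)\;\leq\;\intMetMod{x}{\bar x_i}{\infty}\;\leq\;(1+\kappa B)\,d_\gamma(x,\bar x_i)+E(\SN{\CI}+2B).
\]
Applying the right inequality to the $k$ samples closest in $d_\gamma(x,\cdot)$ exhibits a set of $k$ indices whose proxy values are bounded by $(1+\kappa B)\,d_\gamma(x,\bar x_{k^*(x)})+E(\SN{\CI}+2B)$, so $\bar x_{k(x)}$ necessarily inherits the same upper bound by definition. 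Invoking the left inequality for $\bar x_{k(x)}$, using $\kappa B<\theta\leq 1$ to replace $1+\kappa B$ by $2$, and packaging the common prefactor as $2\vee(\SN{\CI}+2B)$ produces \eqref{eqn:bound_on_approximated_and_true_ball} up to an absorbable numerical constant.

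The main obstacle is the geometric first step: the orthogonalities $w\perp\gamma'(s)$ and $w'\perp\gamma'(t)$ must be exploited in tandem with the fundamental theorem of calculus applied to $\gamma'(t)-\gamma'(s)$, so that the ambient displacement $w$ enters \emph{only} through $(\gamma'(t)-\gamma'(s))^\top w$. This produces a curvature-scaled correction $\kappa B\,d_\gamma$ rather than a bare $B$, which is precisely what keeps the lower constant $\theta-\kappa B$ positive; without this cancellation the interchange argument would fail.
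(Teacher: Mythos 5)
Your proof is correct and follows essentially the same route as the paper: the identity $a(\bar x_i)^\top(x-\bar x_i)=\gamma'(t)^\top(v-v')+(\gamma'(t)-\gamma'(s))^\top w$ exploiting both orthogonalities, the use of $\theta$-almost linearity (the paper's Lemma \ref{lem:almost_linearity}) for the tangential term, the curvature bound $\kappa B\,d_\gamma$ for the cross term, the Cauchy--Schwarz transfer to $\hat a$ with $\|x-\bar x_i\|\le\SN{\CI}+2B$, and the nearest-neighbor interchange are all exactly the paper's steps (cast there as the two-sided inequality \eqref{eq:delta_estimate} for an arbitrary test vector $p$). The only quibble is that, like the paper's own computation, the argument delivers the right-hand side of \eqref{eqn:bound_on_approximated_and_true_ball} only up to an extra constant factor in front of the $\max_i\N{\hat a(\bar x_i)-a(\bar x_i)}$ term, so the statement's constant is not met literally by either derivation — this is a harmless bookkeeping slack, not a gap.
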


\noindent
Note that curvature and reach of a curve $\gamma$ always satisfy $\kappa \reach\leq 1$.
This means that $\kappa B<1$ is trivially satisfied, since $B<\reach$ by \ref{ass:A2}.
Thus, the requirement  $\theta>\kappa B$ is driven by linearization, namely,
by the fact that we are approximating the geodesic geometry of samples projected onto a curved
space, with a linear geometry of samples projected onto its linerization.

To show guarantees for function estimation we first need to derive bounds on the tangent field
$\max_{\ell \in [N]} \N{\hat a(X_{\ell}') - a(X_{\ell}')}$ from bounds on $\max_{i \in [N]} \N{\hat a(X_{i}) - a(X_{i})}$, given by Corollary \ref{cor:tangent_bound_all}.
Using Proposition \ref{prop:intrinsic_bound_for_approximated_closest_neighbors} with sets $\{X_{i} : i \in [N]\}$ and $\{\hat a(X_{i}) : i \in [N]\}$, for all $\ell \in [N]$ we have
\begin{equation}
\begin{aligned}
\label{eq:tangent_field_extension}
\N{\hat a(X_\ell') - a(X_\ell')} &= \N{\hat a(X_{1(X_{\ell}')}) - a(X_\ell')} \leq
\N{\hat a(X_{1(X_{\ell}')}) -  a(X_{1(X_{\ell}')}) +  a(X_{1(X_{\ell}')}) -  a(X_\ell')}\\
&\leq \max_{i \in [N]}\N{\hat a(X_i)-a(X_i)} + \kappa d_{\gamma}( X_{\ell}',X_{1(X_{\ell}')})\\
&\leq \frac{1+\curvtor(2\vee(\SN{\CI}+2B))}{\theta - \curvtor B}\left({\max_{i \in [N]}\N{\hat a(X_i)-a(X_i)} + d_{\gamma}(X_\ell', X_{1^*(X_{\ell}')})}\right)
\end{aligned}
\end{equation}
where $X_{1^*(X_{\ell}')}$ is the sample closest to $X_{\ell}'$ with respect to the geodesic distance.
We can now state the main result for function estimation.

\begin{theorem}
\label{thm:guarantees_f_general}
Assume \ref{ass:A5} - \ref{ass:A3}.
Let $\eta = \infty$ and assume that $\gamma$ is $\theta$-almost linear for some $\theta > \kappa \boundnormal$.
Whenever $N, J$ satisfy the conditions of Corollary \ref{cor:tangent_bound_all},
we have for arbitrary $x \in \suppmarg$ and $1 < u < N$
\begin{equation}
\label{eq:f_guarantee_first_regime}
\SN{\hat{f}_k(x) - f(x)}\leq C\frac{\sigma_{\varepsilon}u}{\sqrt{k}} +\frac{C_B}{(\theta-\kappa \boundnormal)^2}\left(u\frac{k}{N} +
C_E\frac{\log(D)u + \log(J)u^2}{\sqrt{NJ}}+ C_A\frac{\curvtor}{J}\right).
\end{equation}
 with probability at least $1-\exp(-u)$, where $C_A,C_E>0$ are constants from Corollary \ref{cor:tangent_bound_all}, $C>0$ is an absolute constant and
 $C_B=2L_f(2\vee (\SN{\CI}+2B))\left(1+\curvtor(2 \vee (\SN{\CI}+2B)\right)$.
\end{theorem}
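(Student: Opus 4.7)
The plan is to decompose the error into a stochastic noise term and a deterministic bias term, handle the noise via a Hoeffding-type inequality by leveraging the data splitting in Algorithm \ref{alg:main_modified}, and bound the bias using the bi-Lipschitz property of $g$ combined with the geodesic/proxy-metric equivalence from Proposition \ref{prop:intrinsic_bound_for_approximated_closest_neighbors}. Write
\[
\hat f_k(x) - f(x) = \frac{1}{k}\sum_{\ell=1}^k \varepsilon'_{\ell(x)} + \frac{1}{k}\sum_{\ell=1}^k \bigl(f(X'_{\ell(x)}) - f(x)\bigr) =: T_1 + T_2.
\]

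For the noise term $T_1$, condition on the first half of the data $\{(X_i,Y_i) : i \in [N]\}$ together with the second-half covariates $\{X'_\ell : \ell \in [N]\}$. Under this conditioning the nearest-neighbor indices $\ell(x)$ are deterministic (they depend only on the first-half tangents and on $\{X'_\ell\}$), while by \ref{ass:A5} the residuals $\varepsilon'_\ell$ remain conditionally independent, centered, and uniformly bounded by $\sigma_\varepsilon$. Hoeffding's inequality then yields $|T_1| \leq C\sigma_\varepsilon u/\sqrt{k}$ with the claimed probability, producing the first summand of \eqref{eq:f_guarantee_first_regime}.

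For the bias term $T_2$, the bi-Lipschitz property \eqref{eq:g_bilipschitz} gives $|T_2| \leq L_f d_\gamma(x, X'_{k(x)})$. Applying Proposition \ref{prop:intrinsic_bound_for_approximated_closest_neighbors} to the second data set with the induced tangent field $\{\hat a(X'_\ell)\}$ and $\eta = \infty$ bounds this further by
\[
\frac{2 \vee (\SN{\CI} + 2B)}{\theta - \kappa B}\Bigl( d_\gamma(x, X'_{k^*(x)}) + \max_{\ell \in [N]} \N{\hat a(X'_\ell) - a(X'_\ell)}\Bigr),
\]
where $X'_{k^*(x)}$ is the true $k$-th geodesic nearest neighbor among the second half. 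The tangent field error on the second set is in turn controlled by \eqref{eq:tangent_field_extension}, which reduces it to $\max_i \N{\hat a(X_i) - a(X_i)}$ plus the 1-NN geodesic distance between the two halves; Corollary \ref{cor:tangent_bound_all} handles the former, and assumption \ref{ass:A3} (quasi-uniformity of $V = \pi_\gamma(X)$ along $\Im(\gamma)$) handles both the 1-NN distance and $d_\gamma(x, X'_{k^*(x)})$ via standard Chernoff/covering arguments, yielding $d_\gamma(x, X'_{k^*(x)}) \lesssim uk/N$ and $\max_\ell d_\gamma(X'_\ell, X_{1^*(X'_\ell)}) \lesssim u\log(N)/N$. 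In the admissible regime of $N$ and $J$, the latter is dominated by the $(NJ)^{-1/2}$ variance contribution from Corollary \ref{cor:tangent_bound_all} and is absorbed into it.

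Collecting the inequalities, using $(\theta - \kappa B)^{-1} \leq (\theta - \kappa B)^{-2}$ to group both bias contributions under one prefactor, and identifying $C_B = 2L_f(2 \vee (\SN{\CI}+2B))(1 + \kappa(2 \vee (\SN{\CI}+2B)))$ yields \eqref{eq:f_guarantee_first_regime}, with the overall probability following by a union bound over the $O(1)$ high-probability events invoked. The main obstacle is the iterated use of Proposition \ref{prop:intrinsic_bound_for_approximated_closest_neighbors}, once to pass from the proxy $k$-NN of $x$ to its true geodesic $k$-NN, and once, via \eqref{eq:tangent_field_extension}, to transport the tangent-field error from the first half to the second; this double application is precisely what produces the $(\theta - \kappa B)^{-2}$ prefactor. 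A secondary delicate point is to verify that the residuals $\varepsilon'_{\ell(x)}$ remain centered and independent after conditioning on the data-dependent neighbor selection, which is exactly why Algorithm \ref{alg:main_modified} splits the data in half and why \ref{ass:A5} is stated as a conditional-independence assumption.
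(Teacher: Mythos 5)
Your proposal is correct and follows essentially the same route as the paper: the identical $T_1+T_2$ decomposition, Hoeffding for $T_1$ (with the data-splitting conditioning the paper leaves implicit made explicit, which is a nice touch), the bi-Lipschitz estimate plus Proposition \ref{prop:intrinsic_bound_for_approximated_closest_neighbors} and \eqref{eq:tangent_field_extension} for $T_2$, the $\delta$-net argument from Lemma \ref{lem:net_property}, and Corollary \ref{cor:tangent_bound_all} to close. One small imprecision: $\lvert T_2\rvert \le L_f\,d_\gamma(x,X'_{k(x)})$ does not follow directly, since the proxy ordering $1(x),\dots,k(x)$ need not be monotone in $d_\gamma$; one should instead apply the proposition to each of the $k$ terms in the average and bound each $d_\gamma(x,X'_{\ell^*(x)})$ by $\delta k$, which is what the paper does and yields the same final expression.
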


\begin{proof}
We first decompose the left-hand side of \eqref{eq:f_guarantee_first_regime} as
\begin{align*}
&\SN{\hat{f}_k(x) - f(x)} = \SN{\frac{1}{k}\sum_{\ell=1}^{k}Y'_{\ell(x)} - f(x)} \leq \SN{\frac{1}{k}\sum_{\ell=1}^{k}\varepsilon'_{\ell(x)}} +  \SN{\frac{1}{k}\sum_{\ell=1}^{k}f(X'_{\ell(x)}) - f(x)}.
\end{align*}
The first term is a sum independent copies of $\varepsilon$. Since $\SN{\varepsilon}\leq \sigma_{\varepsilon}$ almost surely,
and $\bbE\varepsilon = 0$, H\"offding's inequality for bounded random variables gives, for an absolute constant $C>0$
\begin{align*}
\bbP\left(\SN{\frac{1}{k}\sum_{\ell=1}^{k}\varepsilon'_{\ell(x)}} \leq C \frac{\sigma_{\varepsilon}u}{\sqrt{k}}\right) \geq 1-\exp(-u^2)\geq 1-\exp(-u).
\end{align*}
Assume now $\{\pi_{\gamma}(X_\ell') : \ell \in [N]\}$ and $\{\pi_{\gamma}(X_i) : i \in [N]\}$ are $\delta$-nets for $\Im(\gamma)$ with respect to
$d_{\gamma}$. We can use the Lipschitz property of $g$ and apply Proposition \ref{prop:intrinsic_bound_for_approximated_closest_neighbors} to bound the second term as
\begin{align*}
\SN{\frac{1}{k}\sum_{\ell=1}^{k}f(X_{\ell(x)}') - f(x)} &\leq  \frac{L_f}{k}\sum_{\ell=1}^{k}d_{\gamma}(X_{\ell(x)}', x) \leq \frac{L_f (2\vee (\SN{\CI} + 2B))}{\theta - \kappa \boundnormal}\left(\delta k + \max_{\ell \in [N]}\N{\hat{a}(X'_\ell) - a(X'_\ell)}\right).
\end{align*}
Using \eqref{eq:tangent_field_extension} and $d_{\gamma}(X_\ell', X_{1^*(X_{\ell}')}) \leq \delta \leq \delta k$
we get
\begin{align*}
\SN{\frac{1}{k}\sum_{\ell=1}^{k}f(X_{\ell(x)}') - f(x)} &\leq \frac{C_B}{(\theta - \kappa \boundnormal)^2}\left(\delta k + \max_{i \in [N]}\N{\hat{a}(X_i) - a(X_i)}\right).
\end{align*}
Lemma \ref{lem:net_property} gives that $\{\pi_{\gamma}(X_\ell') : \ell \in [N]\}$ and $\{\pi_{\gamma}(X_i) : i \in [N]\}$ are $\delta$-nets for
$\delta = \lengamma u \left(\uniformC N\right)^{-1}$ with probability $1-2\exp(-u)$.
The claim then follows by Corollary
\ref{cor:tangent_bound_all}.
\end{proof}

Theorem \ref{thm:guarantees_f_general} reveals that the error in function estimation originates from three sources.
The first term accounts for the averaging of the noise, which is incurred by responses $Y_{\ell}'$.
Using $k = \CO(N^{2/3})$, as is standard for Lipschitz-smooth functions, it decays at a rate $N^{-1/3}$.
The second term bounds the geodesic distance to the nearest neighbor, and comes from the covering of the curve by the projected samples.
The last two terms are from the approximation of the geodesic metric with the proxy metric $\intMetMod{x}{\cdot}{\infty}$ through
tangent approximations $\{\hat a(X_i) : i \in [N]\}$, and behave according to Corollary \ref{cor:tangent_bound_all}.
Setting $k = \CO(N^{2/3})$ and $J= C\min\{N/\log^2(N), \sigma_{\varepsilon}^{-1}\}$, as in Section \ref{sec:geometry},
yields
\begin{equation}
\label{eq:f_guarantee_with_parameter_choices}
\SN{\hat{f}_k(x) - f(x)}\lesssim \frac{(1+\sigma_{\varepsilon})u}{(\theta-\curvtor \boundnormal)^2} N^{-1/3}
+ \frac{\curvtor}{(\theta-\curvtor \boundnormal)^2} \max\left\{\frac{\log^2(N)}{N},\sigma_{\varepsilon}\right\}.
\end{equation}
We see that the estimator is generally biased since the error
tends to $\curvtor/(\theta-\curvtor \boundnormal)^2 \sigma_{\varepsilon}$ for $N\rightarrow \infty$.

\begin{remark}[Special cases of Theorem \ref{thm:guarantees_f_general}]\hfill\\
\vspace{-10pt}
\begin{enumerate}[topsep=-10pt,itemsep=5pt,parsep=0pt,partopsep=-10pt, leftmargin = \widthof{\,\,\,\underline{$K_{\CR} = 0$}}]
\item[\underline{$\sigma_{\varepsilon} = 0$}:] In the noise-free case the first term in \eqref{eq:f_guarantee_first_regime} vanishes, and thus
choosing $k = 1$, and $J = CN/\log(N)^2$ with $C$ small enough so that \eqref{eqn:global_Nbound} holds, we have
\begin{equation}
\label{eq:f_noise_free_case}
\SN{\hat{f}_k(x) - f(x)}\lesssim \frac{1}{(\theta-\curvtor \boundnormal)^2}\frac{u + \log(D)\log(N)u + \log^2(N) (\curvtor + u^2)}{N}.
\end{equation}
Up to logarithmic factors, this matches the optimal rate for noise-free estimation of Lipschitz functions, see \cite{kohler2014optimal}.

\item[\underline{$\curvtor = 0$}:] If the model follows the ordinary SIM  the second term in \eqref{eq:f_guarantee_with_parameter_choices} vanishes.
Thus we achieve a $N^{-1/3}$ rate, which is optimal for Lipschitz smooth functions \cite{gyorfi2006distribution}.
\end{enumerate}
\end{remark}

\noindent
Before moving to general curves, let us remark why achieving consistent estimation is a challenging task in the noisy, nonlinear case.
The presented estimator is based on localization and linearization, where localization hinges on the fact that conditional distributions $(X,Y)|\CR_j$ are increasingly SIM-like when reducing the level set width $\SN{\CR_j} = J^{-1}$.
This reduces the effects of curvature and linearization becomes increasingly accurate.
On the other hand, relating the width of $\CR_j$ with the length of corresponding segment $\CS_j$ of the curve, as $\SN{\CR_j}\asymp \SN{\CS_j}$, is by Lemma \ref{lem:bound_curve_segment}
valid only if  $\SN{\CR_j} > 2\sigma_{\varepsilon}$.
Namely, reducing $\CR_j$ beyond that threshold does not reduce $\SN{\CS_j}$, \emph{i.e.}, $X\lvert \CR_j$ does not become more SIM-like.
This predicament can not be further improved under our noise model.

Results in this section imply that having a consistent estimator of the tangent field of $\Im(\gamma)$, whose sample complexity does not depend exponentially on $D$,
is sufficient to construct a consistent estimator for $f$, with a similar sample complexity.
At the same time, a consistent, low-complexity estimator of $f$ can be used to estimate the tangent
field, by approximating $\nabla f$ through finite sample differences.
This suggests a certain equivalence between estimating $f$ and estimating the
tangent field of $\Im(\gamma)$, and to some extent the manifold $\Im(\gamma)$ itself.

Minimax rates for estimating a manifold from $N$ samples $\{X_i : i \in [N]\}$ that are spread around it have been extensively studied
in \cite{genovese2012minimax,genovese2012manifold}.
Moreover, in \cite{genovese2012manifold} the authors provide
a theoretical estimator that converges at a $(\log(N)/N)^{2/(2+d)}$ rate (measured in the Hausdorff distance), where $d$ is
the dimensionality of the manifold. However, they emphasize that the estimator is not practical
and pose the development of a practical alternative as an important open problem. To the best of our knowledge,
this problem still has not been solved.

\subsection{Extension to general curves}
\label{subsec:function_estimation_general_curves}
In the general case the unrestricted proxy metric $\intMetMod{x}{\cdot}{\infty}$ is not
equivalent to the geodesic metric $d_{\gamma}(x,\cdot)$, and thus cannot be used to reliably select nearest neighbors.
To better illustrate this point, let $\gamma$ be a segment of the unit circle that contains two antipodal points $\pi_\gamma(x)$ and $\pi_\gamma(x')$,
and assume we have access to the true tangents $a(x)$, $a(x')$, so that $a(x)=-a(x')$.
Thus, on one hand we have $d_\gamma(x,x')=\pi$, and on the other $\intMetMod{x}{x'}{\infty} = \intMetMod{x'}{x}{\infty}=0$ since $a(x)\perp x'-x $.

To avoid this and establish an equivalence between $d_{\gamma}(x,\cdot)$ and $\intMetMod{x}{\cdot}{\tempradius}$,
similar to Proposition \ref{prop:intrinsic_bound_for_approximated_closest_neighbors},
we thus have to restrict the search space.
Considering the unit circle example, we ought to choose $\tempradius>0$ that ensures there are no two points
$x, x'$, such that $d_\gamma(\pi_\gamma(x),\pi_\gamma(x'))\gg 0$, but $\N{x-x'}\leq \tempradius$ and $\SN{a(x)^\top(x-x')} =0 = \SN{a(x')^\top(x'-x)}$.
It can be shown that this is satisfied for $\tempradius < 2(\reach - \boundnormal)$, provided assumption \ref{ass:A2} holds,
see Figure \ref{fig:q_ahalf} and Lemma \ref{lem:general_curve_ball_auxiliary}.
On the other hand, $\tempradius$ needs to be large enough to ensure there are enough samples within $\CB_{\N{\cdot}}(x,\tempradius)$, with respect to $N$, to achieve optimal
function prediction rates.
Under the uniformity assumption  \ref{ass:A3}, this is ensured whenever
 $\tempradius > 2\boundnormal$.

\begin{figure}[]
  \subfloat[$B=1/4\reach$]{\includegraphics[width=0.32\linewidth]{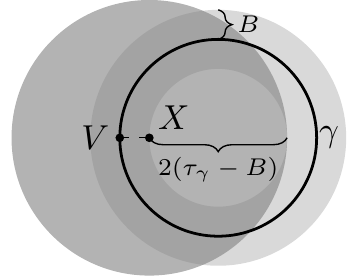}\label{fig:q_one_half_1}}
  \subfloat[$B=1/2\reach$]{\includegraphics[width=0.32\linewidth]{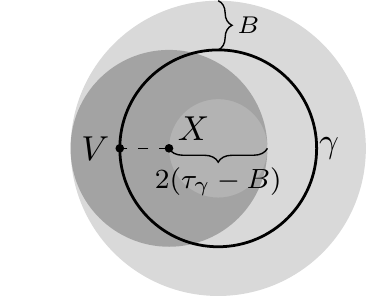}\label{fig:q_one_half_2}}
  \subfloat[$B=3/4\reach$]{\includegraphics[width=0.32\linewidth]{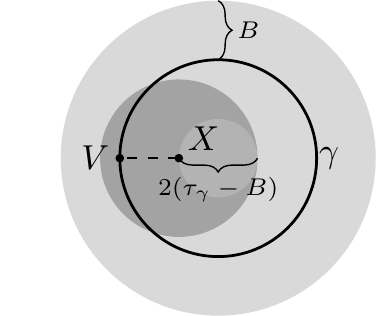}\label{fig:q_one_half_3}}
\caption{Consider $\gamma = \bbS^{1}$ and a point $X=V+W$ with $\N{W} = \boundnormal = \{1/4,1/2,3/4\}\reach$. The ball $\CB_{\Vert \cdot \Vert}(X, 2(\reach -\boundnormal))$ never intersects the antipodal region. Furthermore if $B \leq 1/2\reach$, $\pi_{\gamma}^{-1}(V)$ entirely is covered by $\CB_{\Vert \cdot \Vert}(X, 2(\reach -\boundnormal))$, implying that it has lower bounded probability mass.}
\label{fig:q_ahalf}
\end{figure}

Balancing these two demands we get $2\boundnormal<\tempradius<2(\reach - \boundnormal)$
and thus  $\boundnormal < 1/2\reach$.
Therefore, we require a more restrictive version of \ref{ass:A2}.
To compensate for errors in tangent approximations we further impose $\tempradius < \reach$.
This allows to prove a guarantee for metric equivalence.
\begin{proposition}
\label{prop:local_metric_equivalence_general}
Assume \ref{ass:A2} for $\boundnormal = (1/2-q)\reach$ for some $q > 0$, and choose any $\tempradius \in (2\boundnormal, \reach)$.
Let $\{\bar x_i : i \in [N]\} \subset \suppmarg$, and $\{\hat a(\bar x_i) : i \in [N]\}\subset\bbS^{D-1}$ be arbitrary sets.
For an arbitrary $x\in\suppmarg$ let $\bar x_{k(x)}$ be its $k$-th closest sample based
on $\intMetMod{x}{\cdot}{\tempradius}$, and $\bar x_{k^*(x)}$ be its $k$-closest sample based on $d_{\gamma}(x,\cdot)$.
Whenever $\{\pi_{\gamma}(X_i) : i \in [N]\}$ forms a $\delta$-net on $\Im(\gamma)$, and
\begin{equation}
\label{eq:general_curve_requirement_on_net}
\delta k< \max\Big\{\tempradius - 2B, \frac{1}{2}\big(q\reach - (\reach + \lengamma + 2\boundnormal)\max_{i \in [N]}\N{\hat a(\bar x_i) - a(\bar x_i)}\big)\Big\}
\end{equation} we have
\[d_{\gamma}(x, \bar x_{k(X)}) \leq  4(2\vee (\SN{\CI}+2B) \vee \reach) \LRP{d_{\gamma}(x, \bar x_{k^*(x)}) + \max_{i \in [N]}\N{\hat a(\bar x_i) - a(\bar x_i)}}.\]
\end{proposition}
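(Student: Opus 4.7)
The plan is to prove a two–sided comparison between the proxy metric $\intMetMod{x}{\cdot}{\tempradius}$ and the geodesic metric $d_\gamma(x,\cdot)$ on the restricted search set $\CB_{\N{\cdot}}(x,\tempradius)$, and then transfer this comparison to the $k$-th nearest–neighbour orderings. Concretely, I want to show (i) that there are at least $k$ samples $\bar x_i$ lying inside the Euclidean ball $\CB_{\N{\cdot}}(x,\tempradius)$ whose $\intMetMod{x}{\cdot}{\tempradius}$ value is controlled by $d_\gamma(x,\bar x_{k^*(x)})$ plus a tangent–estimation error, and (ii) that for the selected $\bar x_{k(x)}$ itself the geodesic distance is controlled by $\intMetMod{x}{\bar x_{k(x)}}{\tempradius}$ up to the same kind of error. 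The key geometric input is the assumption $\boundnormal=(1/2-q)\reach$ combined with $\tempradius<\reach$, which through Lemma \ref{lem:general_curve_ball_auxiliary} rules out the antipodal configurations illustrated in Figure \ref{fig:q_ahalf} and therefore yields a chord–arc equivalence $\N{V-V_i}\asymp d_\gamma(V,V_i)$ throughout the search ball.

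For direction (i), I would decompose each sample as $\bar x_i=V_i+W_i$ with $V_i=\pi_\gamma(\bar x_i)$ and $\N{W_i}\leq \boundnormal$. The $\delta$-net hypothesis together with the first clause of \eqref{eq:general_curve_requirement_on_net}, namely $k\delta\leq\tempradius-2\boundnormal$, provides $k$ samples satisfying $d_\gamma(\pi_\gamma(x),V_i)\leq d_\gamma(x,\bar x_{k^*(x)})+k\delta$ and $\N{x-\bar x_i}\leq \N{V-V_i}+2\boundnormal\leq\tempradius$, so all of them are admissible candidates. On those samples I would bound $\SN{\hat a(\bar x_i)^\top(x-\bar x_i)}$ via the decomposition
\begin{equation*}
\hat a(\bar x_i)^\top(x-\bar x_i)=\hat a(\bar x_i)^\top(V-V_i)+\hat a(\bar x_i)^\top(W-W_i),
\end{equation*}
using a $\CC^2$ Taylor expansion $V-V_i=d_\gamma(V,V_i)\,a(\bar x_i)+O(\curvtor\, d_\gamma(V,V_i)^2)$ and the fact that $W_i\perp a(\bar x_i)$ while the normal bundle rotates by at most $\curvtor d_\gamma(V,V_i)$ between $V_i$ and $V$, which yields $\SN{a(\bar x_i)^\top(W-W_i)}\leq \boundnormal\,\curvtor\, d_\gamma(V,V_i)$. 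Swapping $a(\bar x_i)$ for $\hat a(\bar x_i)$ costs an extra $\tempradius\max_i\N{\hat a(\bar x_i)-a(\bar x_i)}$.

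For direction (ii), I would use that $\N{x-\bar x_{k(x)}}\leq\tempradius<\reach$, so the chord–arc comparison again applies and $V-V_{k(x)}$ is almost aligned with $a(\bar x_{k(x)})$. Reversing the estimate above then gives $d_\gamma(x,\bar x_{k(x)})\lesssim\intMetMod{x}{\bar x_{k(x)}}{\tempradius}+\max_i\N{\hat a(\bar x_i)-a(\bar x_i)}$, provided the aggregate error stays below $q\reach$, which is exactly what the second clause of \eqref{eq:general_curve_requirement_on_net} secures. Chaining (i) and (ii) through the minimality $\intMetMod{x}{\bar x_{k(x)}}{\tempradius}\leq\intMetMod{x}{\bar x_{k^*(x)}}{\tempradius}$ and collecting constants then produces the stated inequality with the prefactor $4(2\vee(\SN{\CI}+2\boundnormal)\vee\reach)$.

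I expect the main obstacle to be controlling $\SN{a(\bar x_i)^\top(W-W_i)}$: although $W_i$ is orthogonal to $a(\bar x_i)$ by the very definition of the tangent at $V_i$, the component $W$ is normal at $V$, so its inner product with $a(\bar x_i)$ depends on how fast the normal bundle twists along $\gamma$. Making this rigorous requires an integrated Frenet-type estimate valid inside the reach tube, which is precisely why the hypothesis $\boundnormal<\reach/2$ is imposed. A secondary complication is the case analysis in \eqref{eq:general_curve_requirement_on_net}: when the first clause is tight, the second one takes over, in which regime the $\delta$-net must be fine enough so that the tangent–estimation error itself does not push neighbours outside the $q\reach$-scale where the chord–arc comparison is valid. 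Bookkeeping the constants through both regimes so as to land on a single prefactor of $4(2\vee(\SN{\CI}+2\boundnormal)\vee\reach)$ will likely account for most of the technical effort.
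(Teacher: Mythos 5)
Your direction~(i) is sound and close in spirit to how the paper derives the left-hand side of its \eqref{eq:metric_equivalency}/\eqref{eq:delta_estimate}: decompose $\hat a(\bar x_i)^\top(x-\bar x_i)$ into tangential and normal parts, control the twist of the normal bundle by $\kappa\, d_\gamma(V,V_i)\,B$, and pay a price $\N{x-\bar x_i}\,\N{\hat a(\bar x_i)-a(\bar x_i)}$ for swapping $a$ for $\hat a$. The subsequent chaining through $\intMetMod{x}{\bar x_{k(x)}}{\tempradius}\le\intMetMod{x}{\bar x_{k^*(x)}}{\tempradius}$ is also the right skeleton.

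The genuine gap is in direction~(ii). You assert that $\N{x-\bar x_{k(x)}}\le\tempradius<\reach$ together with Lemma~\ref{lem:general_curve_ball_auxiliary} yields a chord--arc equivalence under which ``$V-V_{k(x)}$ is almost aligned with $a(\bar x_{k(x)})$''. That does not follow. Lemma~\ref{lem:general_curve_ball_auxiliary} only forbids the \emph{exact} equality $a(\pi_\gamma(\bar x))^\top\!\big(x-\pi_\gamma(\bar x)\big)=0$ when $\pi_\gamma(x)\neq\pi_\gamma(\bar x)$ at Euclidean distance below $2(\reach-B)$; it is silent about configurations in which this inner product is small but nonzero, which is precisely where a na\"ive reversal of the estimate fails. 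Moreover $\tempradius<\reach$ and $B<\reach/2$ only give $\N{V-V_{k(x)}}\le\tempradius+2B<2\reach$, which is far outside the regime $\N{V-V_{k(x)}}\le\reach/2$ where the Niyogi--Smale--Weinberger chord--arc bound \cite[Prop.~6.3]{niyogi2008finding} kicks in. The ``almost aligned'' claim therefore presupposes the very conclusion you are trying to derive.

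What the paper actually does here is order-sensitive and more delicate. First it uses direction~(i) and the minimality of $\bar x_{k(x)}$ to show that the proxy value $\omega=\intMetMod{x}{\bar x_{k(x)}}{\tempradius}$ satisfies $\omega+\reach\,\varepsilon_a<q\reach$; only \emph{after} this bound is in hand does it invoke Lemma~\ref{lem:local_metric_equivalence_general}. That lemma, in turn, relies on an auxiliary construction $\tilde x := \pi_\gamma(\bar x) + (\Id - a(\bar x)a(\bar x)^\top)(x - \pi_\gamma(\bar x))$, a contradiction via Lemma~\ref{lem:general_curve_ball_auxiliary} to conclude $\pi_\gamma(\tilde x)=\pi_\gamma(\bar x)$, the Lipschitz continuity of $\pi_\gamma$ inside the reach tube \cite[Thm.~4.8(8)]{federer1959curvature} to bound $\N{\pi_\gamma(x)-\pi_\gamma(\bar x)}$, and only then the chord--arc estimate. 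Your sketch omits the auxiliary projection $\tilde x$, the contradiction step, the Lipschitz estimate on $\pi_\gamma$, and, most importantly, the logical dependence whereby direction~(i) must produce the $q\reach$-scale bound on $\omega$ \emph{before} any reverse inequality is available. As written, your direction~(ii) cannot be completed without supplying all of these ingredients.
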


\noindent
Covering the manifold $\Im(\gamma)$ with a sufficiently fine $\delta$-net $\{\pi_{\gamma}(X_i) : i \in [N]\}$, and condition \eqref{eq:general_curve_requirement_on_net}, are satisfied
with high probability as soon as $N$ is sufficiently large, due to Corollary \ref{cor:tangent_bound_all} and Lemma \ref{lem:net_property}, respectively.
In that case, Theorem \ref{thm:guarantees_f_general} holds also for general curves, by simply replacing
Proposition \ref{prop:intrinsic_bound_for_approximated_closest_neighbors} with Proposition \ref{prop:local_metric_equivalence_general} in the proof.
Since for $N\rightarrow \infty$ the term $\max_{i \in [N]}\N{\hat a(\bar X_i) - a(\bar X_i)}$ converges to $\CO(\curvtor \sigma_{\varepsilon})$
by Corollary \ref{cor:tangent_bound_all}, we are ensured to enter the valid regime
whenever the noise $\sigma_{\varepsilon}$ is small enough compared to $q$ (in particular in the noise-free case).

\begin{proposition}
Assume \ref{ass:A5} - \ref{ass:A3}, and
the conditions of Proposition \ref{prop:local_metric_equivalence_general} hold.
Let $\tempradius \in (2\boundnormal, \reach)$.
Whenever $N$, $J$ satisfy the conditions of Corollary \ref{cor:tangent_bound_all},
we have for arbitrary $x \in \suppmarg$ and $1 < u < N$
\begin{equation}
\label{eq:f_guarantee_global_regime}
\SN{\hat{f}_k(x) - f(x)}\leq C\frac{\sigma_{\varepsilon}u}{\sqrt{k}} +\frac{C_B}{(\theta-\kappa \boundnormal)^2}\left(u\frac{k}{N} +
C_E\frac{\log(D)u + \log(J)u^2}{\sqrt{NJ}}+ C_A\frac{\curvtor}{J}\right),
\end{equation}
 with probability at least $1-\exp(-u)$, where $C_A,C_E>0$ are constants from Corollary \ref{cor:tangent_bound_all},
 $C>0$ is an absolute constant and $C_B=32L_f(2\vee (\SN{\CI}+2B) \vee \reach)\left(1+\curvtor(2\vee (\SN{\CI}+2B) \vee \reach)\right)$.
\end{proposition}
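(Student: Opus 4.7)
The plan is to rerun the proof of Theorem \ref{thm:guarantees_f_general} essentially verbatim, with Proposition \ref{prop:local_metric_equivalence_general} replacing Proposition \ref{prop:intrinsic_bound_for_approximated_closest_neighbors} at every invocation. First I would split
\[ |\hat f_k(x) - f(x)| \leq \Big|\tfrac{1}{k}\sum_{\ell=1}^k \varepsilon'_{\ell(x)}\Big| + \Big|\tfrac{1}{k}\sum_{\ell=1}^k f(X'_{\ell(x)}) - f(x)\Big|. \]
Sample splitting makes the indices $\ell(x)$ measurable with respect to $\{X_i\}\cup\{X'_\ell\}$ only, so $\{\varepsilon'_{\ell(x)}\}$ are conditionally i.i.d.\ copies of $\varepsilon$ and H\"offding gives the $C\sigma_\varepsilon u/\sqrt{k}$ term exactly as before. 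For the bias term the Lipschitz property of $f$ reduces the task to controlling $(L_f/k)\sum_\ell d_\gamma(X'_{\ell(x)},x)$.

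I would then apply Proposition \ref{prop:local_metric_equivalence_general} with $\{\bar x_i\}=\{X'_\ell\}$ and the tangent field $\{\hat a(X'_\ell)\}$ obtained by nearest-neighbor transfer from $\{X_i\}$, which yields
\[ d_\gamma(x, X'_{\ell(x)}) \leq 4(2 \vee (|\CI|+2B) \vee \reach)\,\big(d_\gamma(x, X'_{\ell^*(x)}) + \max_\ell \|\hat a(X'_\ell) - a(X'_\ell)\|\big). \]
The maximum on the right is bounded by the chain in \eqref{eq:tangent_field_extension}, but with Proposition \ref{prop:local_metric_equivalence_general} (applied with $k=1$ to the reference set $\{X_i\}$) replacing Proposition \ref{prop:intrinsic_bound_for_approximated_closest_neighbors}; the curvature correction produces the factor $1+\kappa(2\vee(|\CI|+2B)\vee\reach)$ that appears in $C_B$. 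Finally, the covering radii $d_\gamma(x,X'_{\ell^*(x)})$ and $d_\gamma(X'_\ell,X_{1^*(X'_\ell)})$ are bounded by the $\delta$-net of Lemma \ref{lem:net_property} with $\delta\lesssim \lengamma u/(\uniformC N)$, while $\max_i\|\hat a(X_i)-a(X_i)\|$ is bounded by Corollary \ref{cor:tangent_bound_all}. Collecting the three contributions (noise averaging, net gap $k/N$, and tangent approximation) yields \eqref{eq:f_guarantee_global_regime}.

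The main obstacle, and the one genuinely new step compared to Theorem \ref{thm:guarantees_f_general}, is the verification of hypothesis \eqref{eq:general_curve_requirement_on_net}: I must ensure that $\delta k$ stays below both $\tempradius-2B$ and $\tfrac12\big(q\reach - (\reach + |\CI| + 2B)\max_i\|\hat a(X_i)-a(X_i)\|\big)$. The first threshold is positive because we have assumed $B=(1/2-q)\reach$ and $\tempradius\in(2B,\reach)$; the second is positive only once the tangent error falls below $q\reach/(\reach+|\CI|+2B)$. By Corollary \ref{cor:tangent_bound_all} the tangent error decays to $O(\kappa\sigma_\varepsilon)$, so we enter the valid regime for $N$ large and $\sigma_\varepsilon$ small enough relative to $q$ (in particular in the noise-free case), which is exactly the qualitative restriction flagged in the discussion preceding the statement. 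Once \eqref{eq:general_curve_requirement_on_net} holds, the rest is bookkeeping that mirrors the perturbed-SIM proof.
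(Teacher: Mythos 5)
Your proposal is correct and matches the paper's own strategy: the paper does not give a standalone proof of this proposition, stating only that ``Theorem \ref{thm:guarantees_f_general} holds also for general curves, by simply replacing Proposition \ref{prop:intrinsic_bound_for_approximated_closest_neighbors} with Proposition \ref{prop:local_metric_equivalence_general} in the proof,'' which is precisely your plan. Your attention to verifying hypothesis \eqref{eq:general_curve_requirement_on_net}---including the observation that it holds for large $N$ once the tangent error falls below $q\reach/(\reach+\SN{\CI}+2B)$, and that this requires $\sigma_\varepsilon$ small relative to $q$---reproduces the discussion the paper places immediately before the proposition statement rather than inside its proof.
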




\section{Numerical Experiments}
\label{sec:experiments}
In this section, we present experimental results of the proposed estimator in two settings.
First, we conduct synthetic experiments to validate theoretical results of Sections \ref{sec:geometry} and \ref{sec:function_estimation}.
Second, we benchmark the estimator against commonly used methods on a selection of real-world data sets.
The source code for Algorithm \ref{alg:main} and synthetic experiments is available at \url{https://github.com/soply/nsim_algorithm}.
Moreover, real-world data sets, code for their preprocessing, and implementations of competing estimators (or references, if publicly available source code is used)
are readily available at  \url{https://github.com/soply/local_sim_experiments}.
\subsection{Experiments with synthetic data}
\label{subsec:synthethic_experiments}
\begin{figure}[]\noindent
\subfloat[\texttt{Line}]{\includegraphics[trim={2cm 1.5cm 1cm 2.cm},clip,width=0.32\linewidth]{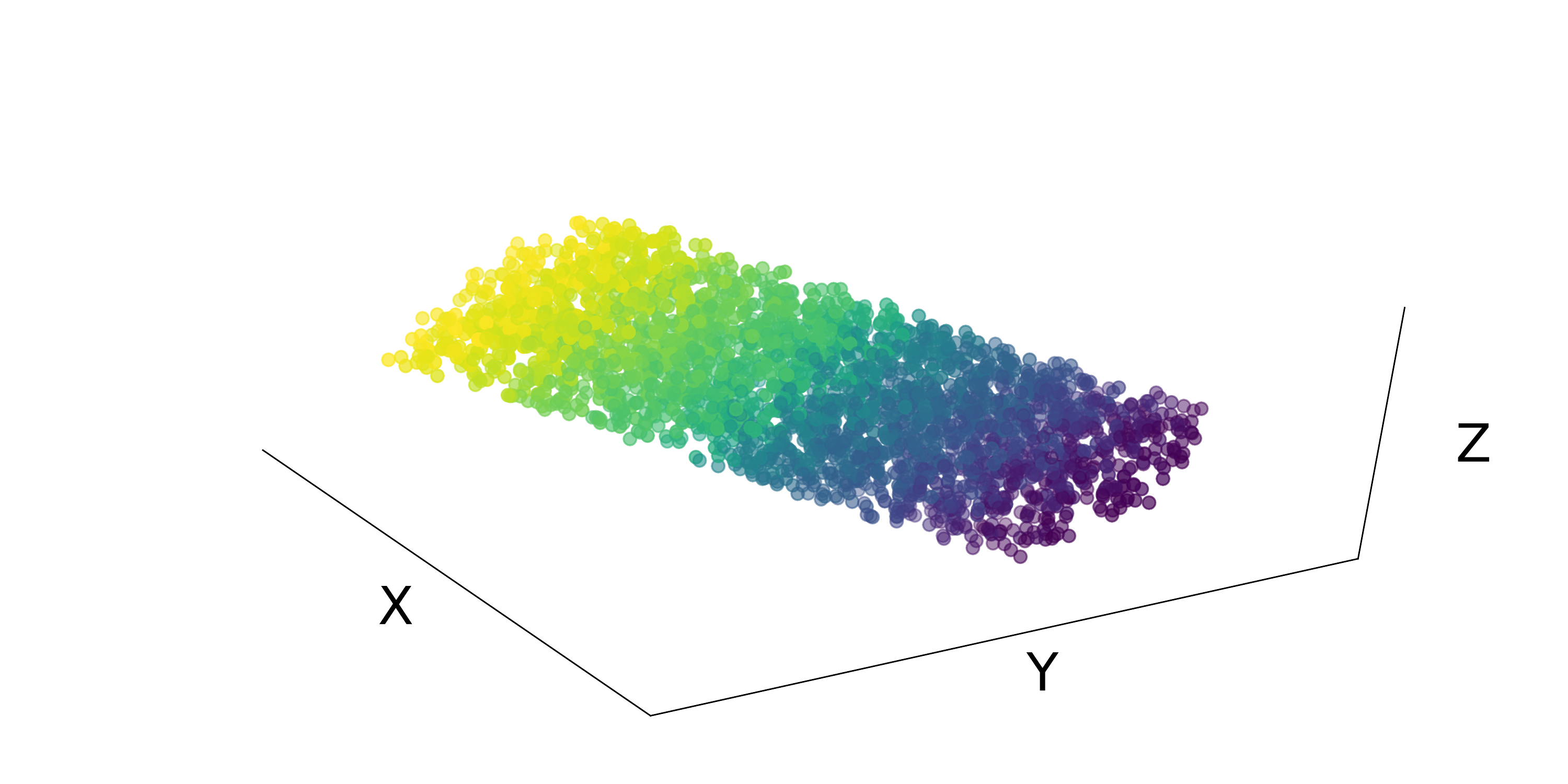}\label{fig:marginal_X_id}}
\subfloat[\texttt{S-Curve}]{\includegraphics[trim={2cm 1.5cm 1cm 2.cm},clip,width=0.32\linewidth]{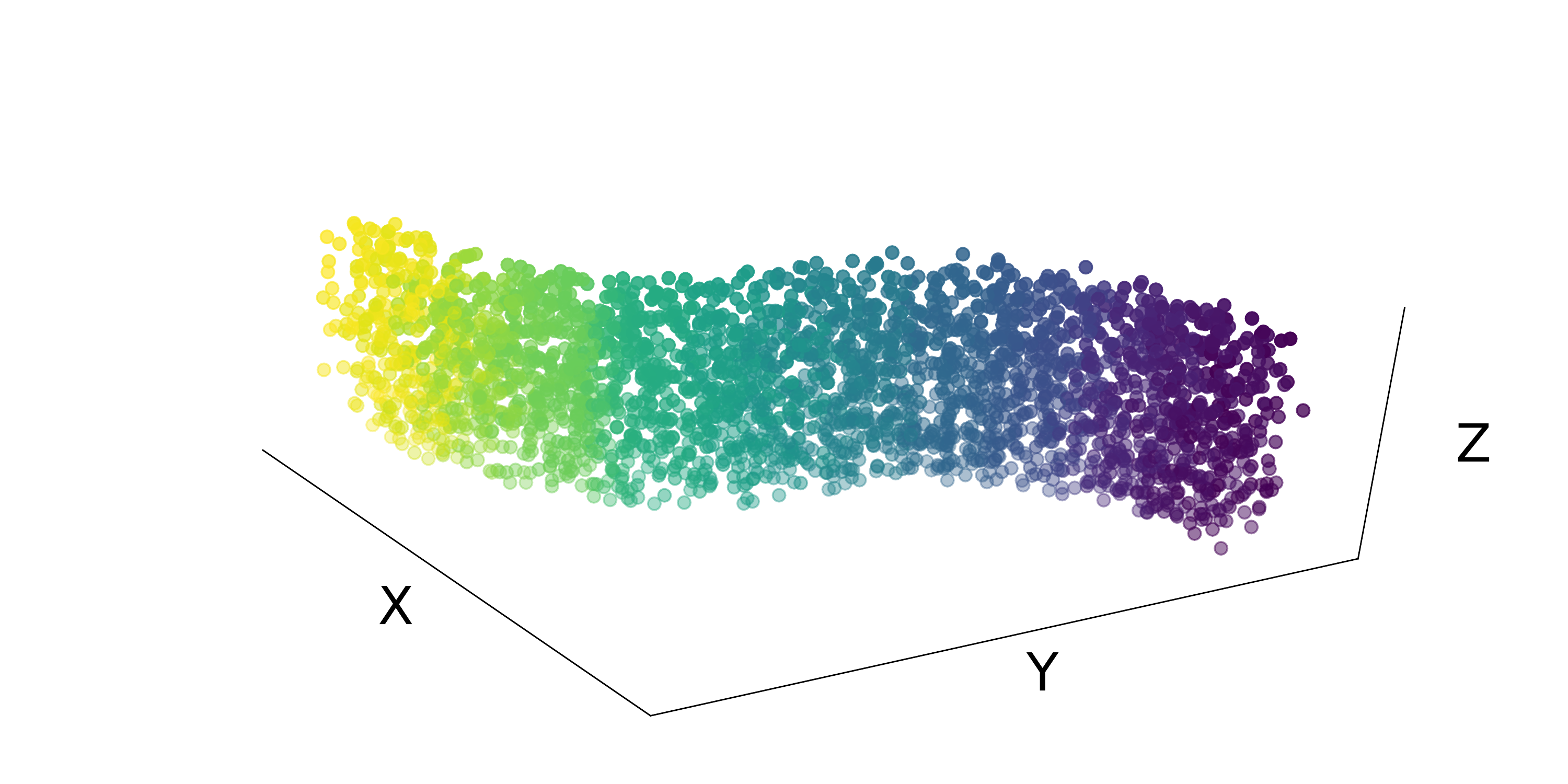}\label{fig:marginal_X_scurve}}
\subfloat[\texttt{Helix}]{\includegraphics[trim={2cm 1.5cm 1cm 2.cm},clip,width=0.32\linewidth]{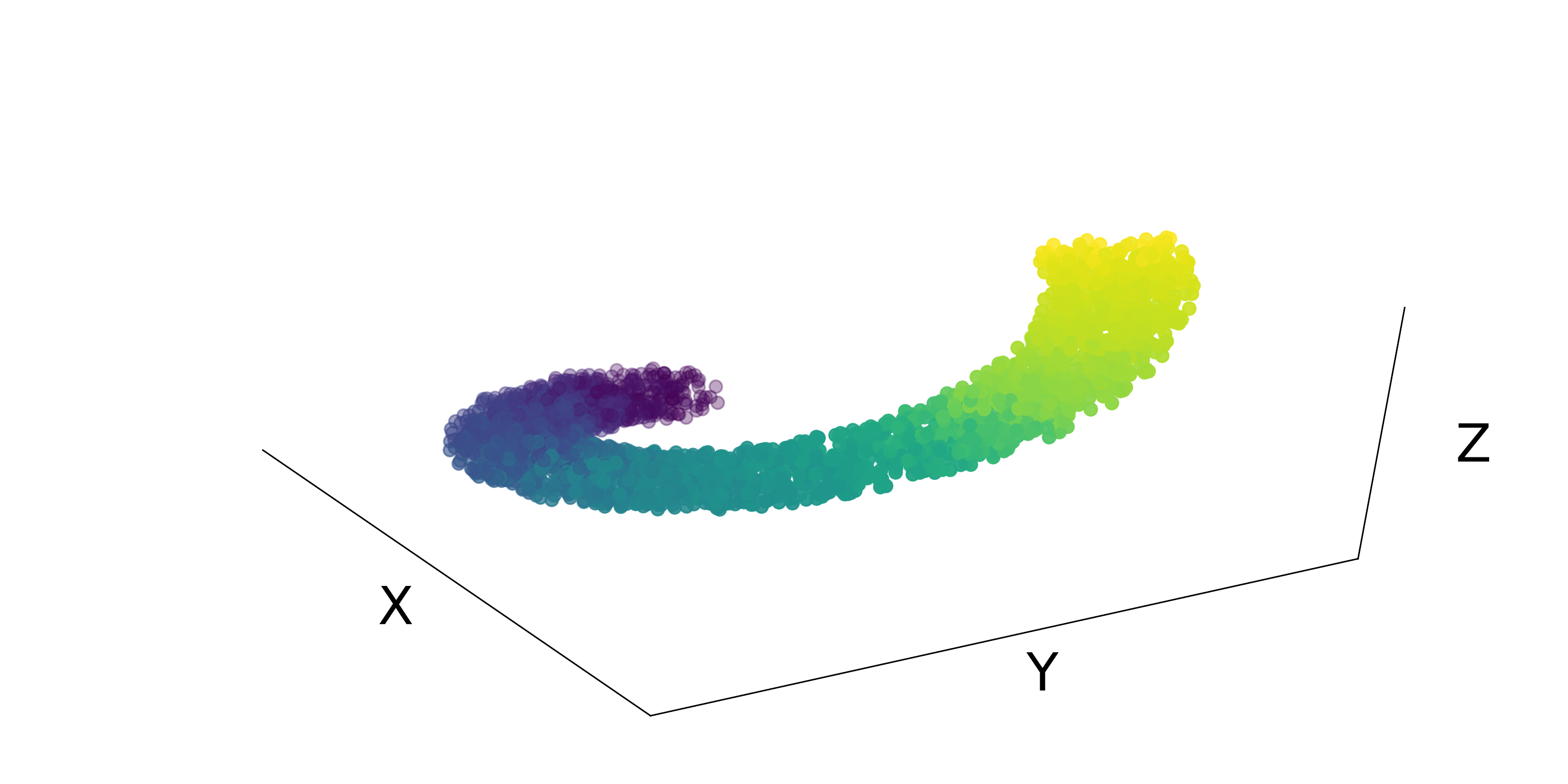}\label{fig:marginal_X_helix}}\\
\subfloat[\texttt{Line}]{\includegraphics[width=0.32\linewidth]{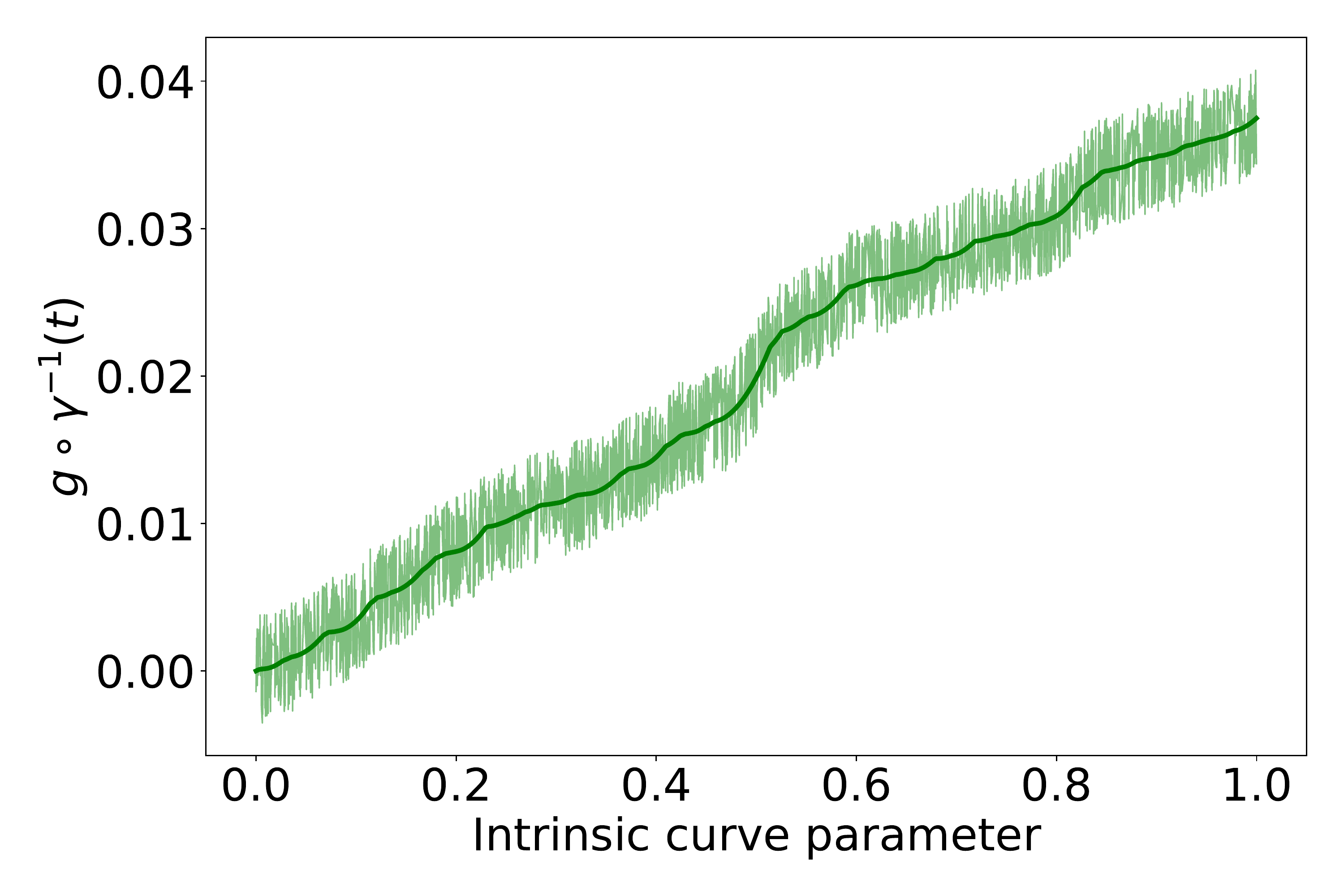}\label{fig:function_id}}
\subfloat[\texttt{S-Curve}]{\includegraphics[width=0.32\linewidth]{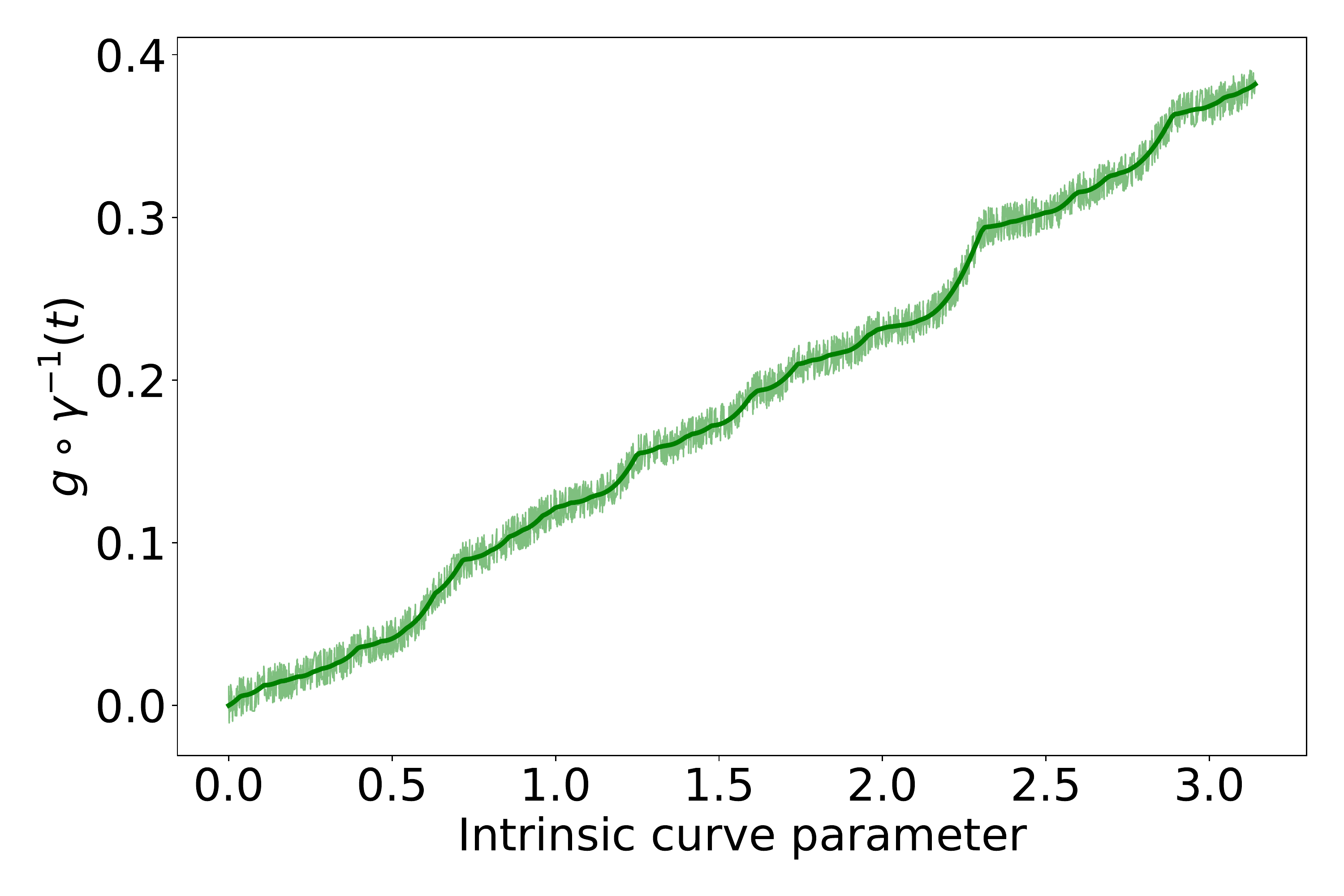}\label{fig:function_scurve}}
\subfloat[\texttt{Helix}]{\includegraphics[width=0.32\linewidth]{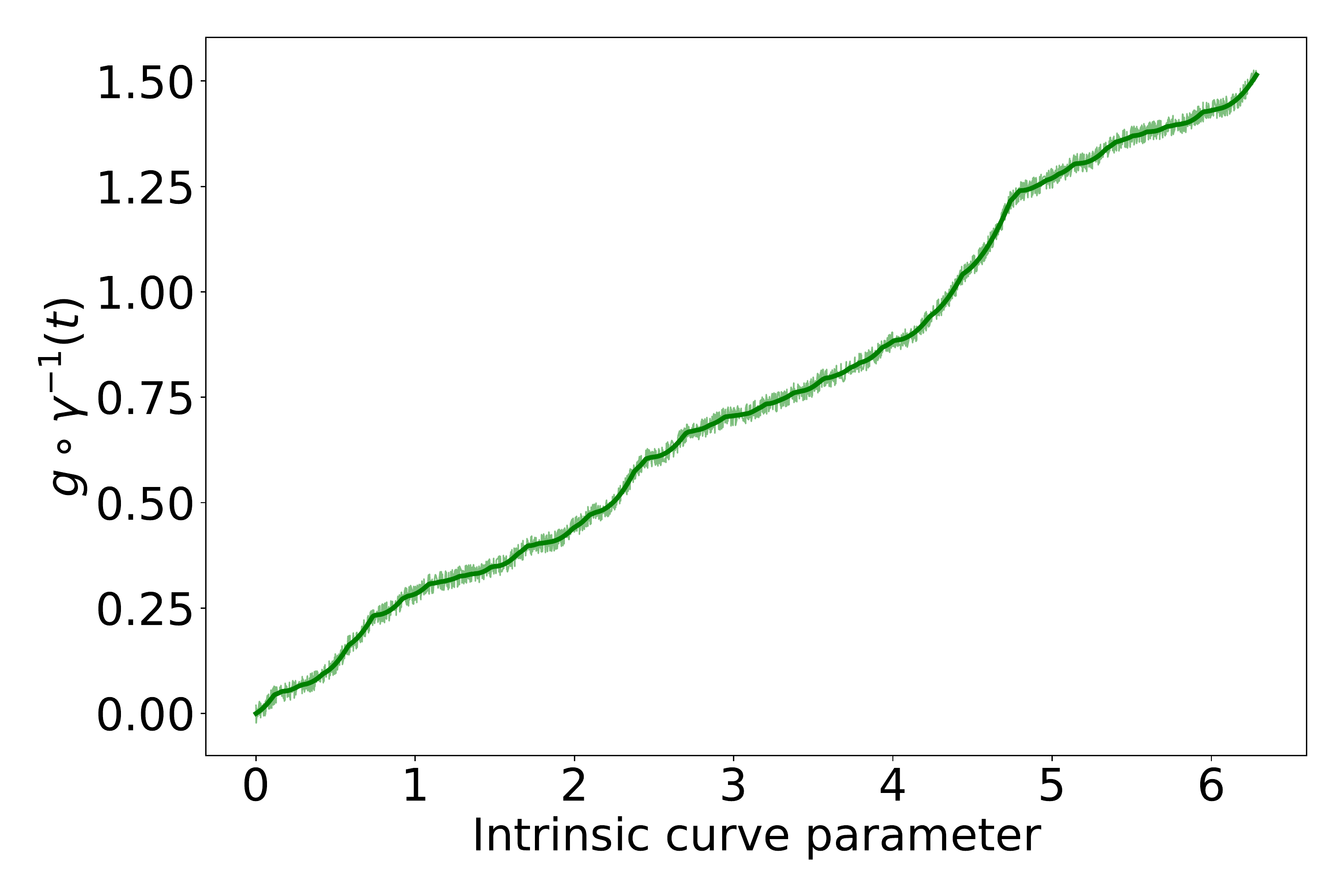}\label{fig:function_helix}}
\caption{Top row: random draws from marginal distributions around $\Im(\gamma)$ visualized for $D = 3$ and for
three different curves. Bottom row: link functions in dark green with maximum noise
level in light green. Note that the visually small noise for the S-curve and Helix manifold is due to a different scaling,
and becomes large when isolating small level sets $Y \in \CR$.}
\label{fig:synthethic_problems}
\end{figure}
\paragraph{General setup.}
We consider the following three curves
\begin{align*}
\texttt{Line: }\quad \Im(\gamma) &= \{ta : a = (1,1,1),\ t \in [0,1]\},\\
\texttt{S-Curve: }\quad \Im(\gamma) &= \{(\cos(t), \sin(t)) : t \in [-\pi/2,0]\} \cup \{(2-\cos(t), \sin(t)) : t \in [0,\pi/2]\},\\
\texttt{Helix: }\quad \Im(\gamma) &= \{(\cos(t/\sqrt{2}), \sin(t/\sqrt{2}), t/\sqrt{t}) : t \in [0,2\pi] \},
\end{align*}
and embed them into $\bbR^D$ for $D \in \{4,8,12\}$. We set $X = V + F(V)U$, where $V$
is sampled uniformly on $\Im(\gamma)$, $U$ is sampled uniformly on
$\CB_{\N{\cdot}}(0, 0.25)$, and the rows of $F(V) \in \bbR^{D\times (D-1)}$
form an orthonormal basis for the normal space of $\Im(\gamma)$ at $V$.
Examples of such marginal distributions are illustrated in the top row of Figure \ref{fig:synthethic_problems}.
The target function $g \circ \gamma^{-1}$ is a strictly monotonic, piecewise quadratic polynomial.
We set $Y = g\circ \gamma^{-1}(\pi_{\gamma}(X)) + \varepsilon$ with $\varepsilon \sim \Uni{[-\sigma_{\varepsilon},\sigma_{\varepsilon}]}$.
Different noise levels are used: $\sigma_{\varepsilon} = c\Delta f$ with $c \in \{0\}\cup\{ 10^{-\ell} : \ell = 1,\ldots,4\}$ and $\Delta f:= (\max_i f(X_i) - \min_i f(X_i))\lengamma^{-1}$.

Parameter selection for the NSIM estimator is guided by Section \ref{sec:function_estimation}.
Namely, we use $k=1$ and $J=(15D)^{-1}N$ if $\sigma_{\varepsilon} = 0$,
and $k=1/2N^{2/3}$ with cross-validation over $J \in \{2^\ell : \ell \in [13]\}$ in the noisy case.
Furthermore, the  restricting radius for the nearest neighbor search is  $\tempradius = 0.5$.
We also train an ordinary kNN-regressor with $k=1$ in the noise-free case,
and $k=1/2N^{2/3}$ in noisy case, to demonstrate that in these problems ordinary kNN-regression indeed
suffers from the curse of dimensionality.

For evaluating the NSIM estimator, we report the root mean squared errors (RMSE)
\begin{align*}
\textrm{RMSE}(\hat f - f) := \sqrt{\frac{\sum_{m=1}^{1000}\left(\hat f(Z_m) - f(Z_m)\right)^2}{\sum_{m=1}^{1000}f(Z_m)^2}},\quad
\textrm{RMSE}(\hat a - a) := \sqrt{\frac{1}{J}\sum_{j=1}^{J}\N{\hat a_j - a_j}^2},
\end{align*}
where $\{Z_m : m \in [1000]\}$ are test samples iid. from $\rho_X$, and $J = J(N)$ is
chosen as described above. The results are averaged over
20 repetitions of the same experiment. {The standard deviation is indicated by vertical bars.}

\begin{figure}[]\noindent
\subfloat[NSIM for \texttt{Line}]{\includegraphics[width=0.33\linewidth]{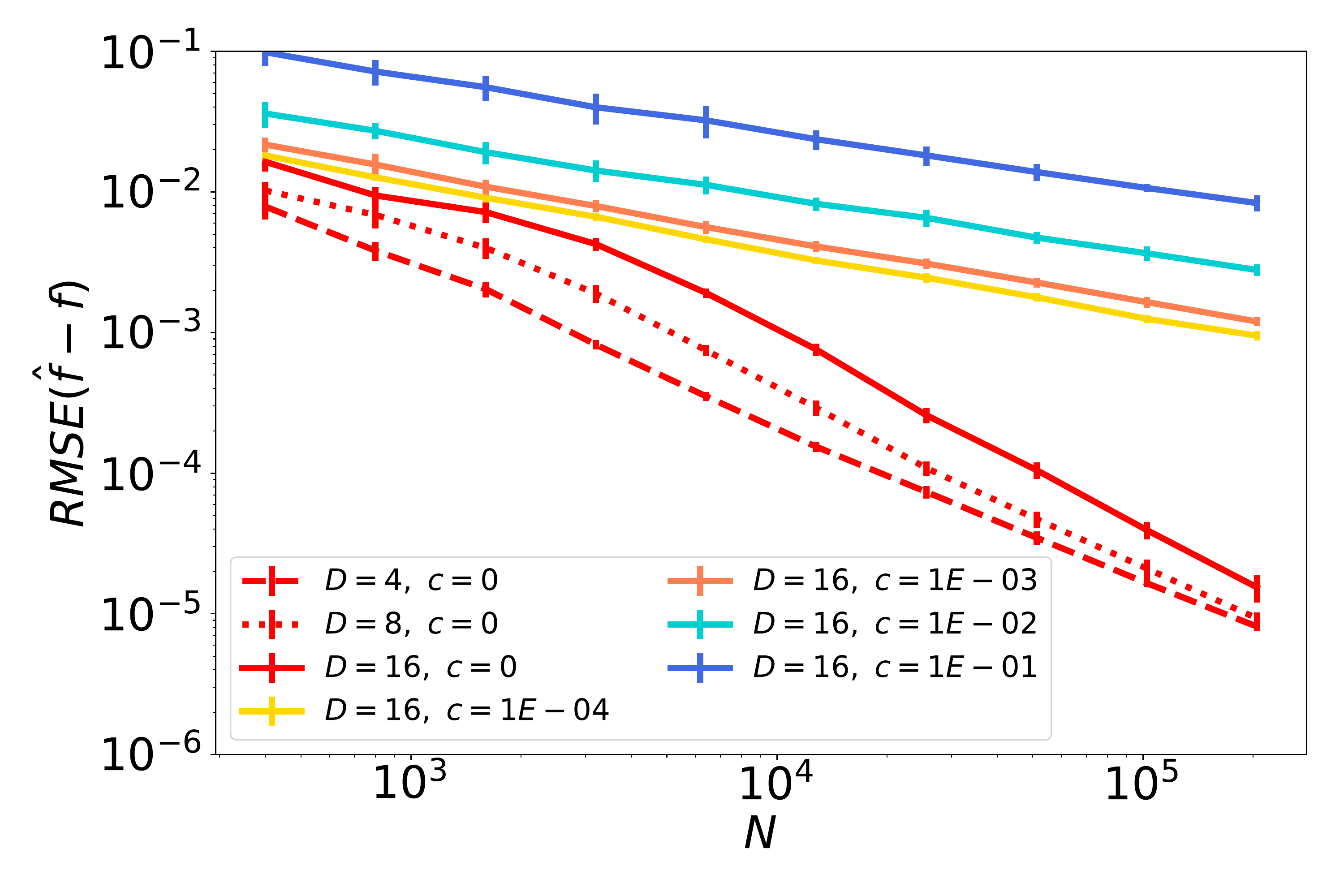}\label{fig:nsim_id_function}}
\subfloat[NSIM for \texttt{S-curve}]{\includegraphics[width=0.33\linewidth]{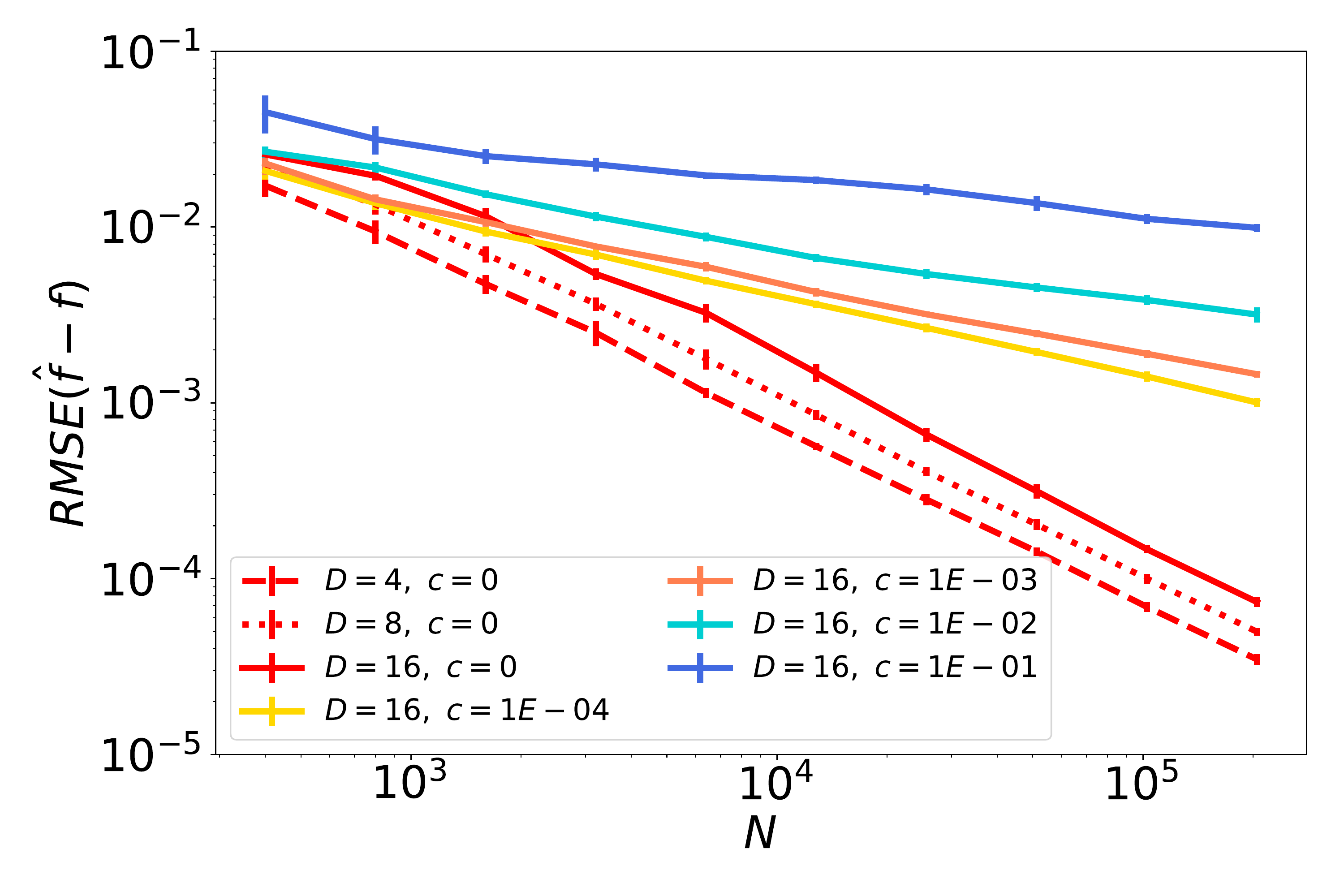}\label{fig:nsim_scurve_function}}
\subfloat[NSIM for \texttt{Helix}]{\includegraphics[width=0.33\linewidth]{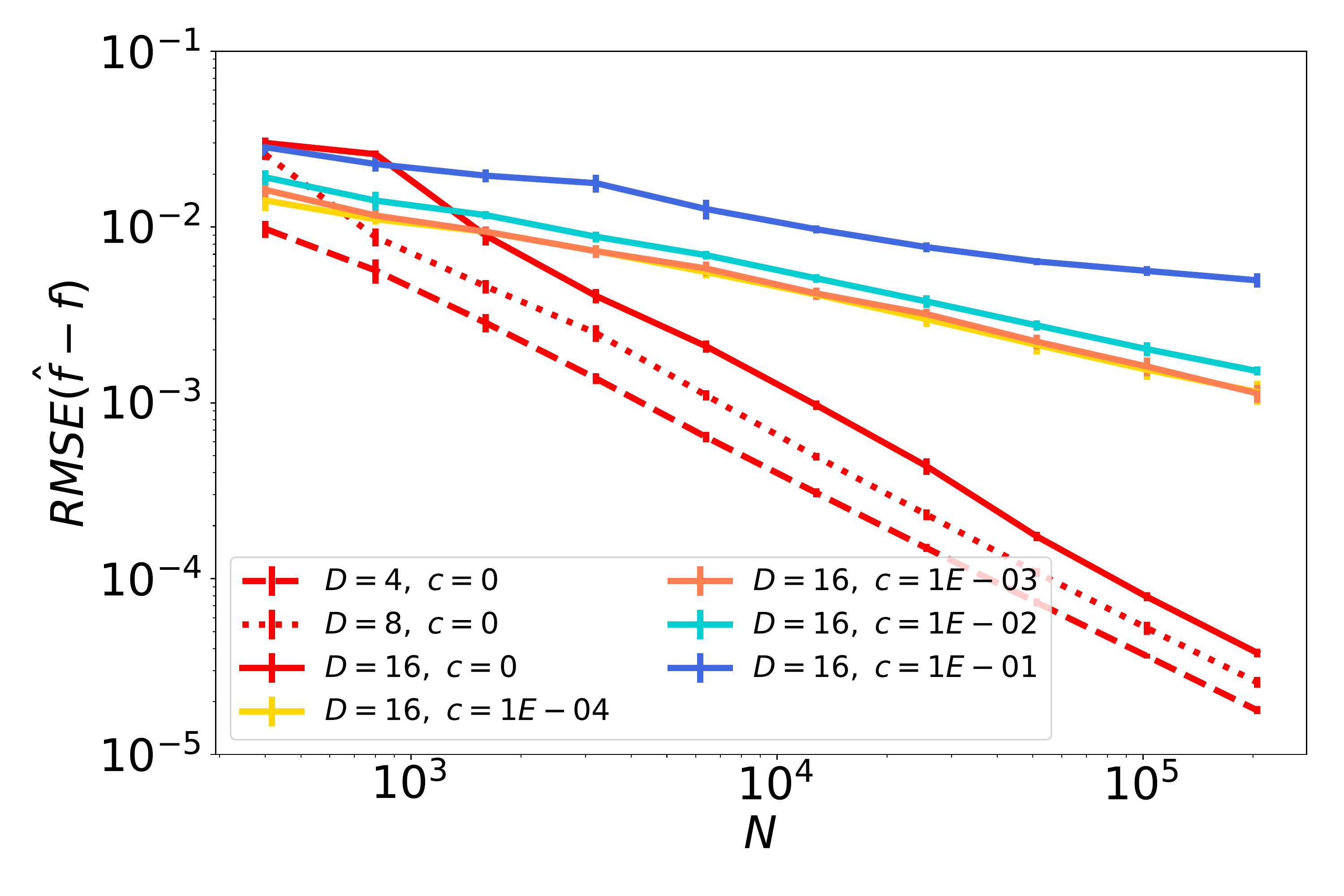}\label{fig:nsim_helix_function}}\\ \noindent
\subfloat[NSIM for \texttt{Line}]{\includegraphics[width=0.33\linewidth]{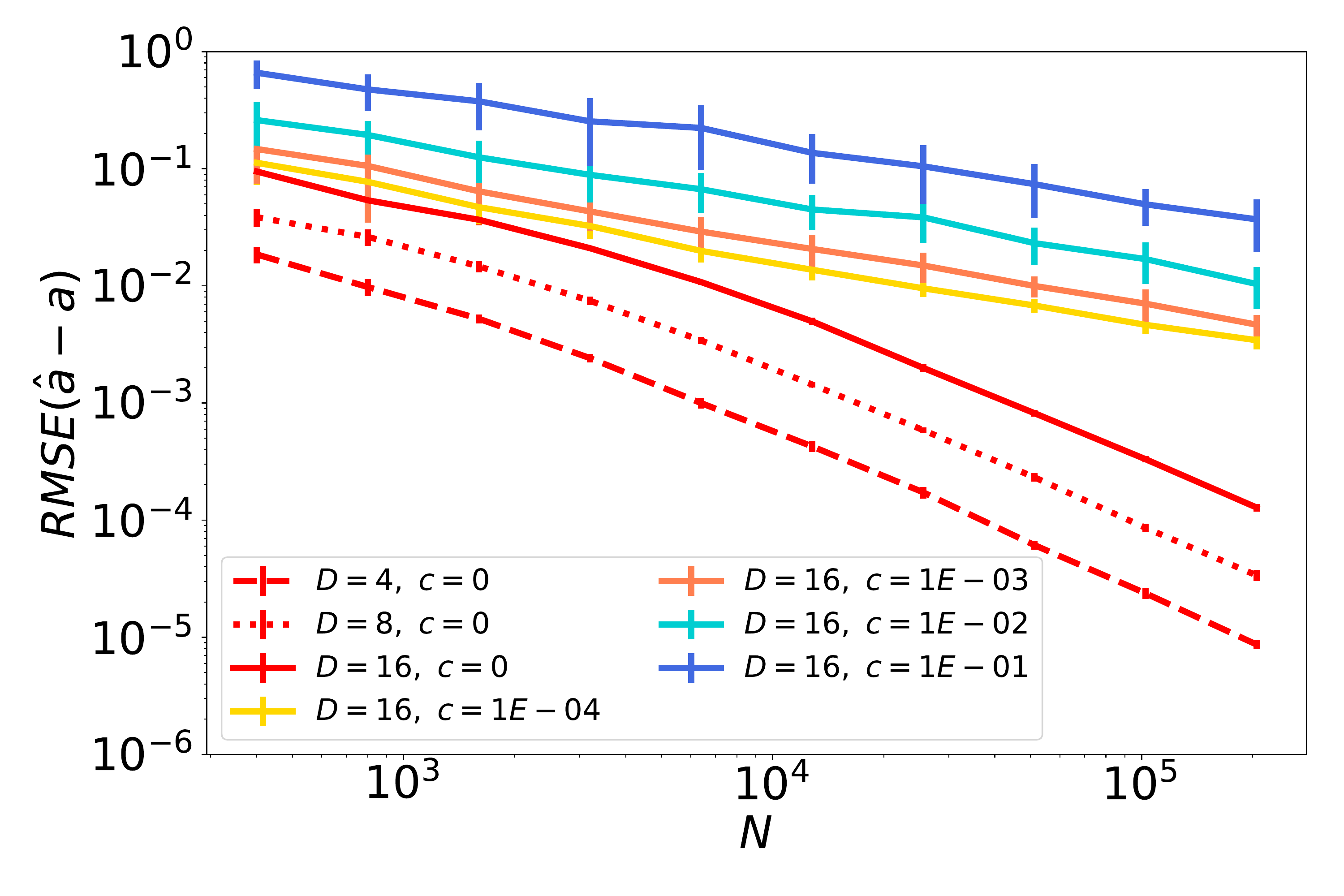}\label{fig:nsim_id_tangent}}
\subfloat[NSIM for \texttt{S-curve}]{\includegraphics[width=0.33\linewidth]{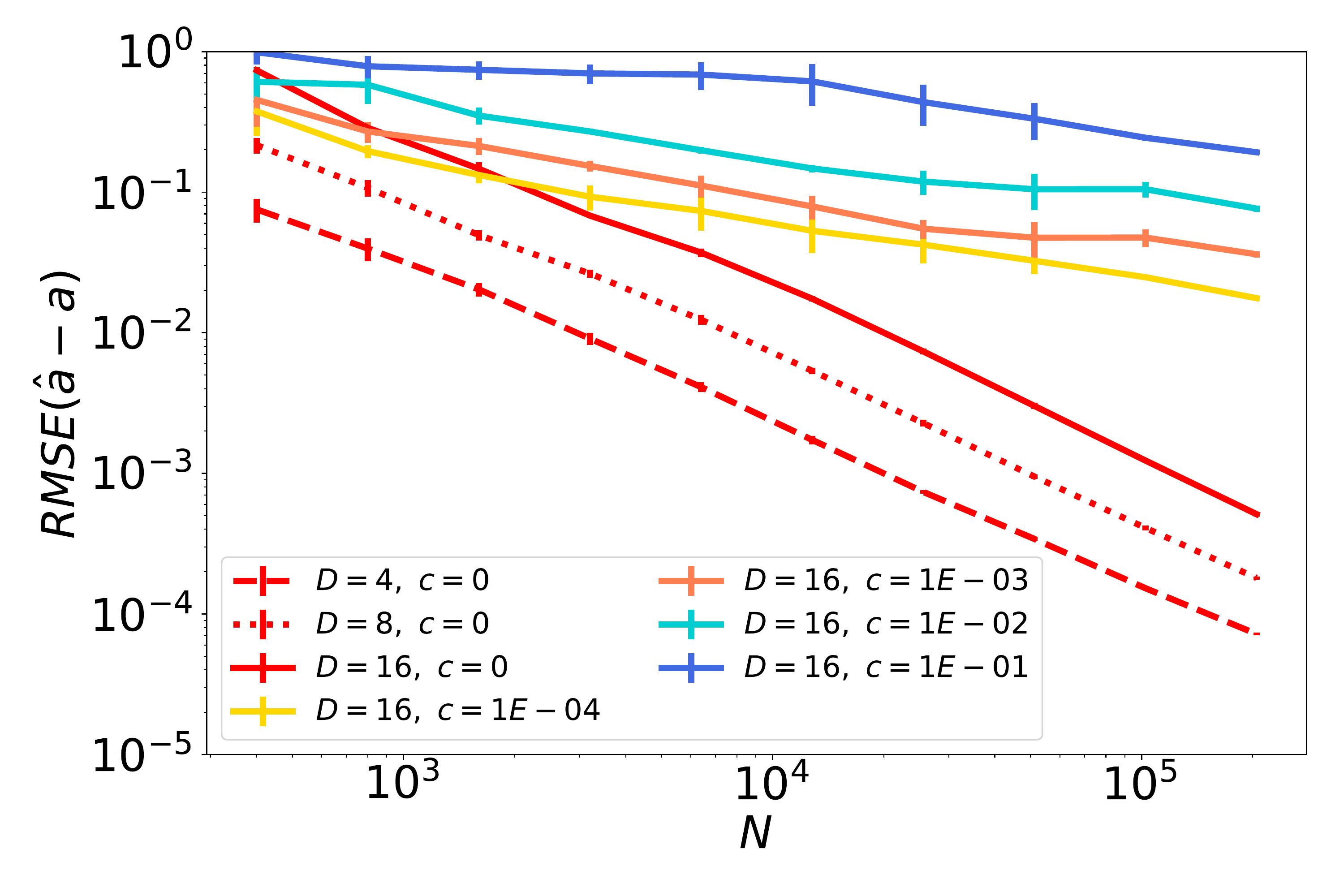}\label{fig:nsim_scurve_tangent}}
\subfloat[NSIM for \texttt{Helix}]{\includegraphics[width=0.33\linewidth]{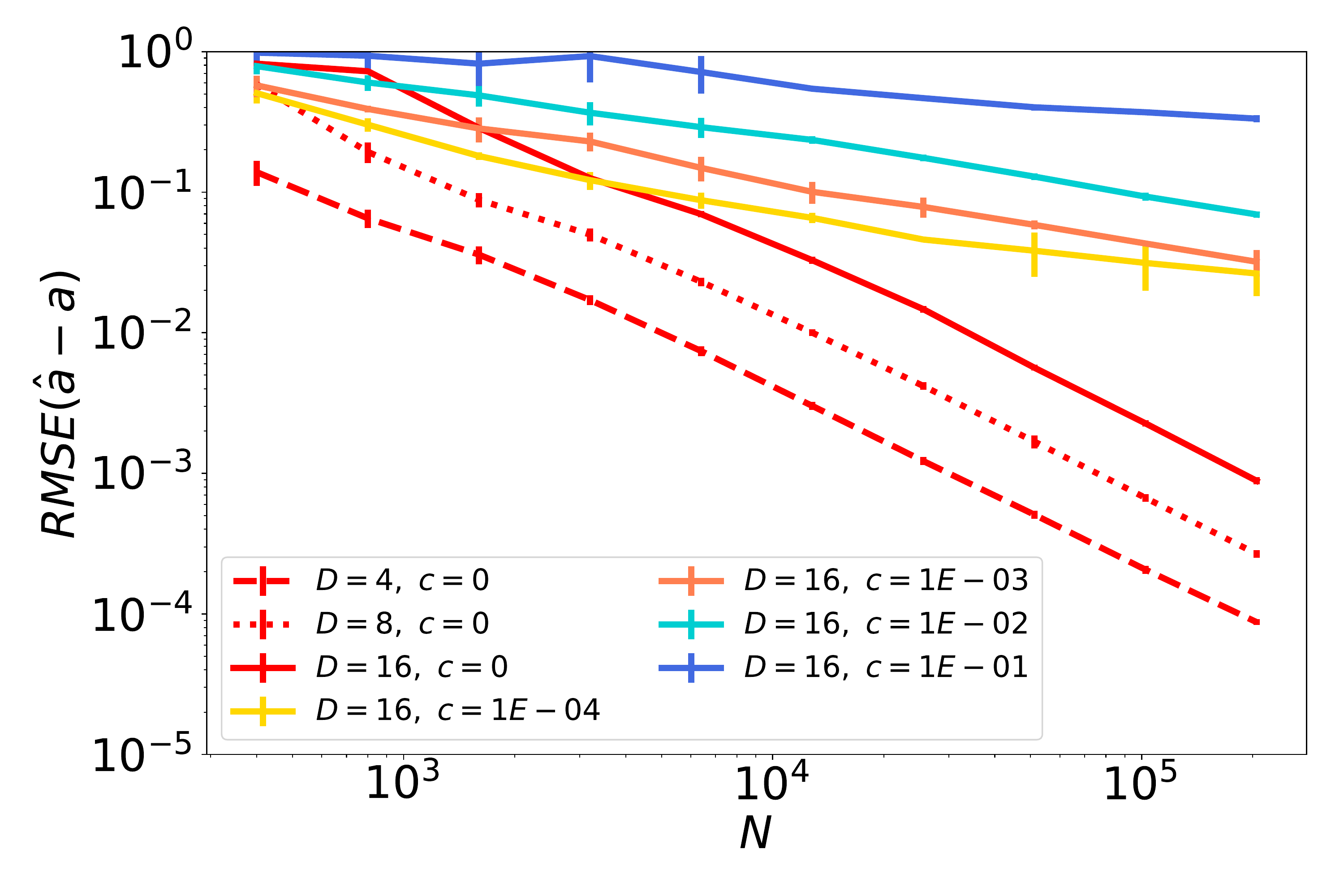}\label{fig:nsim_helix_tangent}}\\\noindent
\subfloat[kNN for \texttt{Line}]{\includegraphics[width=0.33\linewidth]{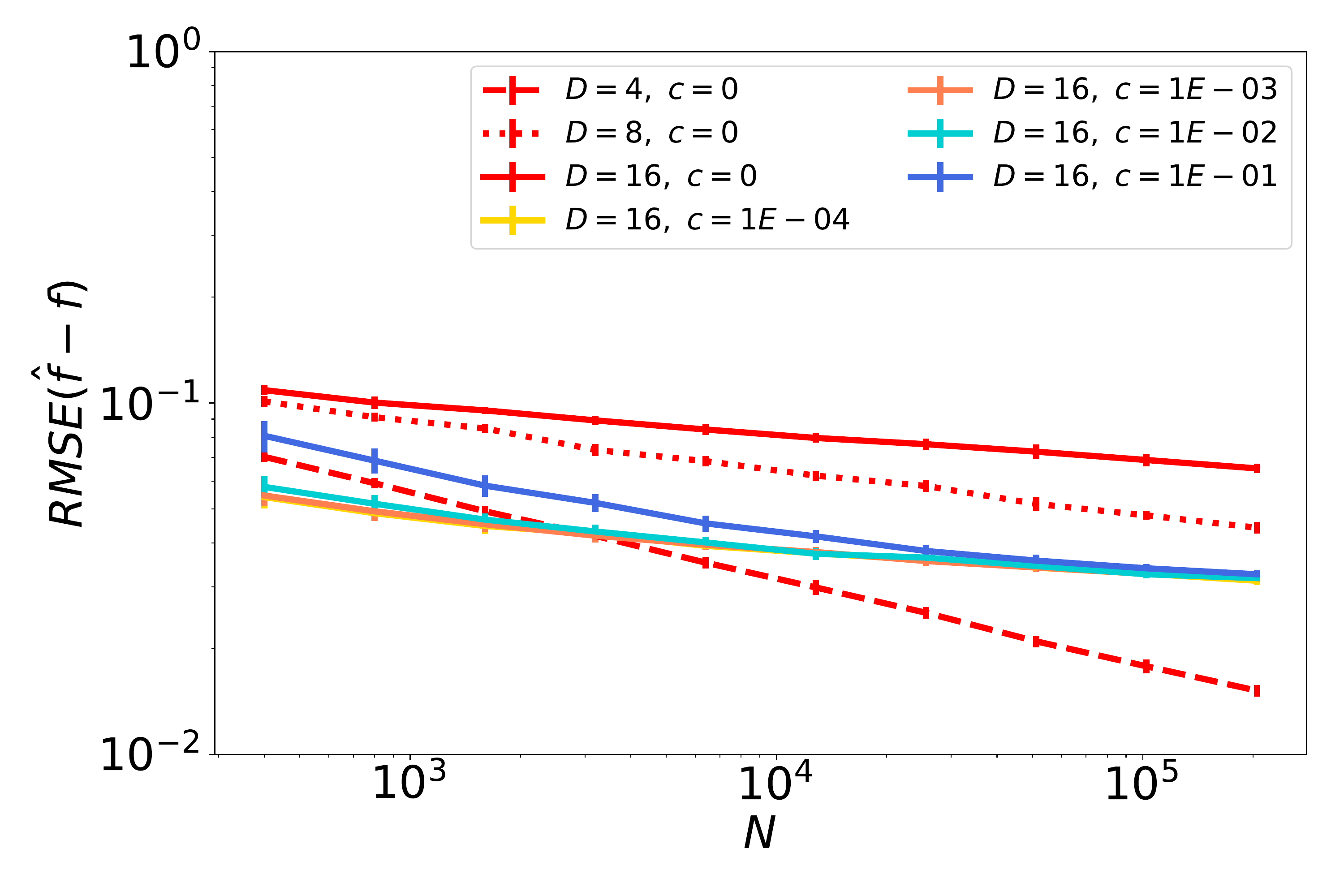}\label{fig:knn_id}}
\subfloat[kNN for \texttt{S-curve}]{\includegraphics[width=0.33\linewidth]{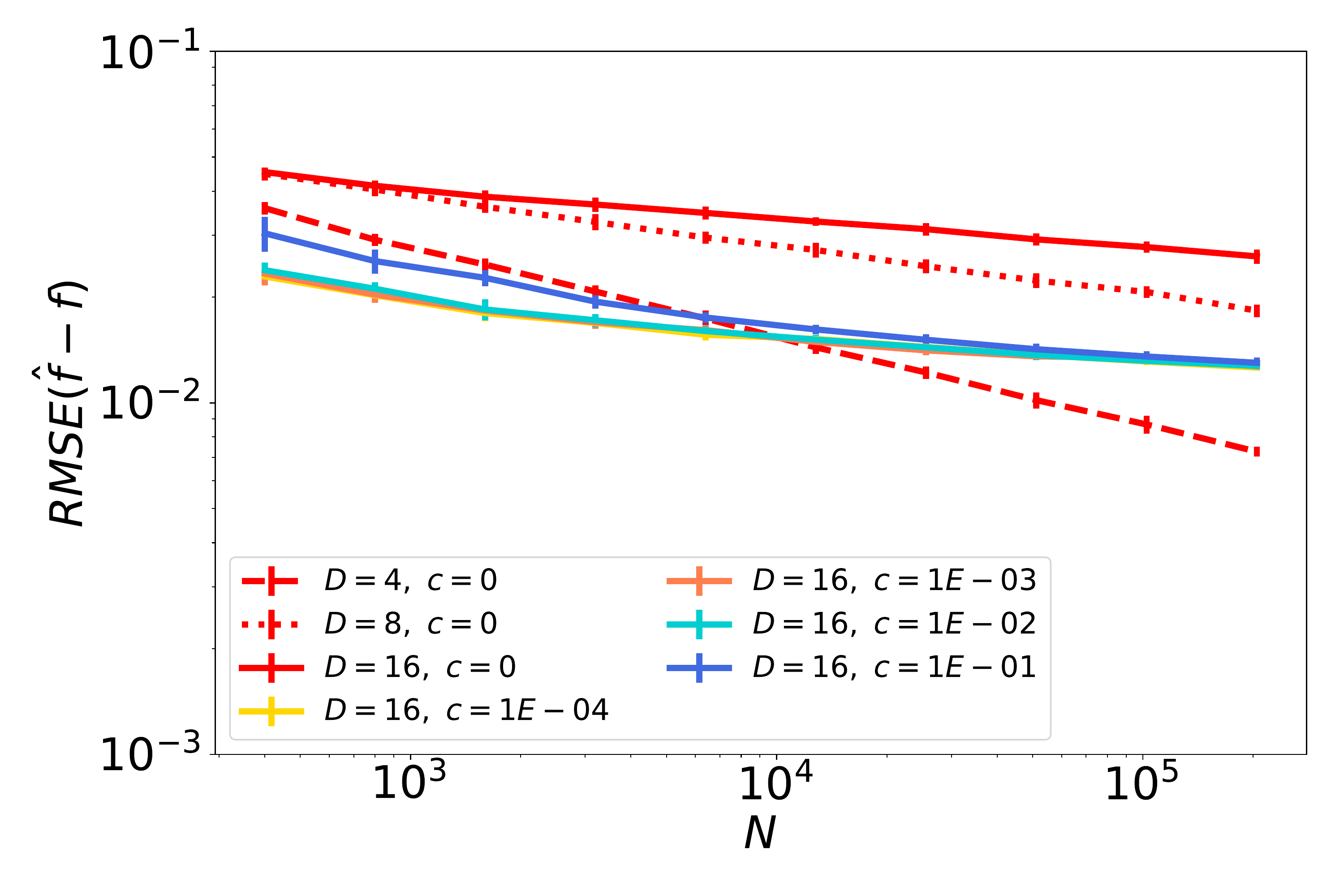}\label{fig:knn_scurve}}
\subfloat[kNN for \texttt{Helix}]{\includegraphics[width=0.33\linewidth]{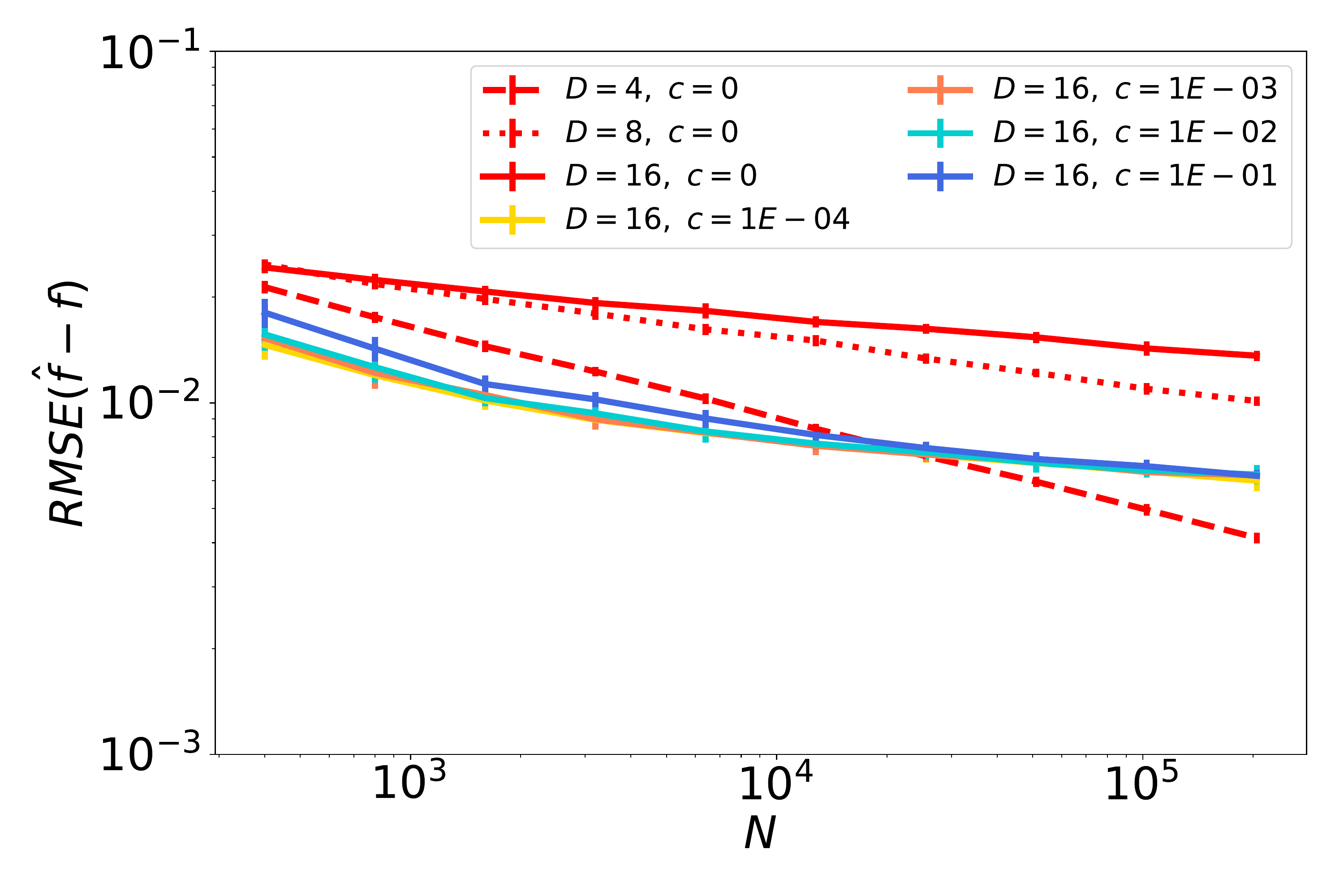}\label{fig:knn_helix}}
\caption{Error decay with respect to $N$ for NSIM and ordinary kNN-estimation. Red lines correspond to
noisefree cases, where the ambient dimension $D$ is varied. Other colors represent
different noise levels, and $D = 12$. Top: RMSE for function estimation when using the proposed NSIM estimator in Algorithm \ref{alg:main}.
Middle: RMSE for localized index vectors over all level sets.
Bottom: RMSE for function estimation when using ordinary kNN-regression.}
\label{fig:results_synthethic_problems}
\end{figure}

\paragraph{Discussion}
The results of our studies are presented in Figure \ref{fig:results_synthethic_problems}.
In red plots, which correspond to cases with $\sigma_{\varepsilon} = 0$,
we observe a $N^{-1}$ decay of the function error (Figures \ref{fig:nsim_id_function} - \ref{fig:nsim_helix_function}),
and similarly a $N^{-1}$ decay of the tangent field error
(Figures \ref{fig:nsim_id_tangent} - \ref{fig:nsim_helix_tangent}).
In particular, the ambient dimension $D$ affects the error only in terms of a multiplicative
constant but not in the rate of decay. Therefore, the NSIM estimator does not suffer
from the curse of dimensionality, which is not the case for ordinary kNN-regression as
shown in Figures \ref{fig:knn_id} - \ref{fig:knn_helix}.

The remaining plots in Figure \ref{fig:results_synthethic_problems} represent  noisy cases,
where the highest noise level corresponds to blue lines. Considering the first column,
where $\Im(\gamma)$ is a straight line, and therefore the data follows an ordinary SIM,
we see that the error for function and index vector estimation steadily decreases
at a $N^{-1/3}$ rate. This confirms our theoretical result, \emph{i.e.}, the NSIM estimator
is consistent, and achives the optimal rate, in case of an ordinary SIM. If we have a curved geometry
and function noise on the other hand, errors for function prediction and tangent field
estimation stall after reaching a certain quality. This can be seen e.g.
in the blue plots in Figures \ref{fig:nsim_scurve_function} and \ref{fig:nsim_helix_function}.

We remark here that estimators, that are used for comparison on real data sets in
the next section, have been tested on these synthethical problems as well. We omit corresponding results
because none show any improvement as the sample size $N$ increases (apart from SIM estimators
and the $\texttt{Line}$ problem). This is expected for SIM estimators because they can not
resolve the underlying nonlinear geometry during training.

\subsection{Real data}
\label{subsec:realdata_experiments}
We will now test the NSIM algorithm
and compare it to other commonly used algorithms on a variety of real-worlds data sets.
We report the mean RMSE 
and its standard deviation over 30 repetitions of each experiment.
In each run, we use $15\%$ of the data as the test set, and we tune hyper-parameters
for each estimator using 5-fold cross-validation on exhaustive parameter grids.

\paragraph{Data sets.}
We use 6 UCI data sets (\texttt{Air Quality}, \texttt{Boston Housing},
\texttt{Concrete}, \texttt{Istanbul Stock Exchange}, \texttt{Skillcraft1},
\texttt{Yacht}) and the \texttt{Ames Housing} data set in our study.
For each data set, the components of $X$ are standardized and we exclude clearly irrelevant features.
Moreover, if the marginal of $\tilde Y = \log(Y)$ resembles the uniform distribution better (compared to $Y$), we use $\tilde Y$ instead of $Y$.
The preprocessed the data sets are readily available at
\url{https://github.com/soply/db_hand}.

\setlength{\tabcolsep}{9pt}
\begin{table}[h!]
\scriptsize
\begin{tabular}{@{}cccccccc@{}}
\centering
Characteristics& {Yacht} & {Istanbul} & {Ames} & {Concrete} & {Air Quality} &  {Boston} & Skillcraft\\ \toprule
{$\log$-TF} & {Yes} & {No} & {Yes} & {No} & {No} & {Yes} & {Yes} \\ \midrule
{$D,\ N$} & $6,\ 307$& $7,\ 536$& $7,\ 1197$ & $8,\ 1030$ & $11,\ 7393$ & $12,\ 506$ & $16,\ 3338$\\ \midrule
{Factor} & $10^{1}$ & {$10^{-2}$} & $10^5$ & $10^{1}$ & $10^{-1}$ & $10^1$ & $10^2$\\ \midrule
{$\bar Y\pm\textrm{STD}(Y)$} & $1.05 \pm 1.51$ &  $0.16 \pm 2.11$   & {$1.74 \pm 0.67$} &
{$3.58\pm1.67$} & {$9.95 \pm 4.03$} & {$1.27 \pm 0.71$} & {$1.15 \pm 0.48$} \\\bottomrule \\
{Method} & \\ \toprule
{NSIM-dyad} & {$0.15\pm0.04$} & $1.52\pm0.14$  & $\highlight{0.23\pm0.04}$ & $0.9\pm0.06$ & $0.82\pm0.04$ & $\highlight{0.42\pm0.06}$ & $0.08\pm0.01$\\
$k$ & {$11.6$} & {$19.9$} & {$14.6$} & {$46.0$} & {$60.8$} & {$33.1$} & {$19.0$} \\
$J$ & {$2.4$} & {$1.1$} & {$2.4$} & {$3.9$} & {$5.6$} & {$1.0$} & {$4.1$} \\ \midrule
{NSIM-stat} & $\highlight{0.12\pm0.03}$ & $\highlight{1.39\pm0.18}$  & $\highlight{0.23\pm0.03}$ & $0.97\pm0.06$ & $0.80\pm0.02$ & $\highlight{0.42\pm0.04}$ & $0.08\pm0.01$\\
$k$ & {$8.6 $} & {$19.3$} & {$18.2$} & {$41.6$} & {$69.3$} & {$43.0$} & {$17.7$} \\
$J$ & {$5.5 $} & {$1.0$}  & {$3.1$} & {$2.7$} & {$5.3$} & {$1.0$} & {$5.2$} \\ \midrule
{Lin-Reg} & $0.22\pm0.07$ & $\highlight{1.38\pm0.13}$  & $\highlight{0.23\pm0.02}$ & $1.06\pm0.06$ & $1.22\pm0.03$ & $0.50\pm0.11$ & $0.14\pm0.03$\\ \midrule
{kNN}  & $0.76\pm0.11$ & $1.52\pm0.16$ & $0.26\pm0.03$ & $0.89\pm0.08$ & $1.03\pm0.02$ & $\highlight{0.41\pm0.06}$ & $0.17\pm0.01$ \\
$k$ & {$1.1$} & {$17.8$} & {$9.8$} & {$5.5$} & {$25.0$} & {$6.8$} & {$9.8$} \\ \midrule
{SIR-kNN} &$0.26\pm0.11$ & $1.48\pm0.16$ & $0.25\pm0.03$ & $1.05\pm0.06$ &$1.87\pm0.04$ & $0.47\pm0.05$ & $0.17\pm0.01$\\
$k$ & {$10.4$} & {$21.7$} & {$20.0$} & {$48.4$}  & {$137.5$} & {$43.5$} & {$37.1$} \\
$J$ & {$10.8$} & {$7.4$}  & {$21.8$} & {$3.0$} & {$4.8$} & {$8.5$} & {$25.6$} \\ \midrule
{Isotron} &$0.15\pm0.05$ & $1.42\pm0.11$  & $\highlight{0.24\pm0.03}$ & $1.03\pm0.05$ & $0.83\pm0.03$ & $\highlight{0.42\pm0.05}$ & $0.08\pm0.01$\\
Iterations & {$460.0$} & {$343.75$} & {$338.75$} & {$392.5$} & {$596.25$} & {$280.0$} & {$425.0$} \\ \midrule
{ELM-Sig} & $0.44\pm0.30$ & $1.46\pm0.15$  & $\highlight{0.23\pm0.04}$ & $0.72\pm0.05$  & $0.58\pm0.12$ & $0.44\pm0.06$ & $0.20\pm0.04$\\
Nodes & {$88.8$} & {$15.2$}  & {$54.0$} & {$86.3$} &  {$91.8$} & {$46.2$} & {$77.1$} \\ \midrule
{SNN-Tan} & $0.48\pm0.20$ & $1.61\pm0.21$  & $0.25\pm0.04$ & $0.80\pm0.07$ & $\highlight{0.14\pm0.04}$ & $\highlight{0.41\pm0.05}$ & $\highlight{0.04\pm0.01}$\\
Nodes & {$9.4$} & {$3.0$} & {$18.95$} & {$15.1$} & {$15.95$} & {$13.0$} & {$14.0$} \\ \midrule
{SNN-Sig} & $0.30\pm0.11$ & $1.65\pm0.27$ & $\highlight{0.23\pm0.03}$ & $\highlight{0.63\pm0.05}$ & $0.18\pm0.02$ & $\highlight{0.41\pm0.05}$ & $\highlight{0.04\pm0.00}$\\
Nodes & {$13.0$} & {$3.9$}  & {$8.1$} & {$16.9$}& {$21.5$} & {$7.5$} & {$10.4$} \\
\end{tabular}
\caption{RMSE, standard deviation, and cross-validated hyper-parameters, over $30$ repetitions for several estimators and real-world data sets.
 Values for $k$, $J$, and for numbers of iterations and nodes, are averages over different runs of each experiment.
First $5$ rows describe the data sets and their characteristics, and the remaining rows contain the results.
For a simplified presentation, we divide the mean and STD of RMSE, and the mean and STD of the data (5th row)
by the value in row \emph{Factor}.}
\label{tab:real_data_experiments}
\end{table}

\paragraph{Estimators.}
\begin{itemize}
\item \texttt{NSIM-dyad}, respectively, \texttt{NSIM-stat} refer to Algorithm \ref{alg:main} using a dyadic partition, respectively statistically equivalent blocks.
$k$ and $J$ are chosen via cross-validation. The radius is the intersecting Euclidean ball is determined by $\eta=\infty$.
\item \texttt{Lin-Reg} and \texttt{kNN} are standard linear regression and kNN-regression.
\item \texttt{SIR-kNN} uses sliced inverse regression \cite{li1991sliced} to find an index vector $a$, and then
kNN on projected samples $(a^\top X, Y)$.
Replacing \texttt{SIR} by \texttt{SAVE} \cite{dennis2000save} uniformly worsens the results.
\item \texttt{Isotron}, \cite{kalai2009isotron}, iteratively fits the link function $g$ using isotonic regression  \cite{isotonicregression}
on projected samples $(a^\top X, Y)$, and then updates the index vector $a$. The iteration is initialized with $a=0$ and stopped
when the validation error stalls on a hold-out set.
\item \texttt{ELM-Sig}, \cite{huang2006extreme}, is a shallow neural network with sigmoid activation where inner biases and weights are randomly sampled, and only the outer layer is trained on data. This can be done by solving a simple linear system, which makes the
algorithm very efficient. {We also tested the hyperbolic tangent activation function, but the results were uniformly worse.}
\item \texttt{SNN-Tan} and \texttt{SNN-Sig} are standard shallow neural networks with hyperbolic tangent and sigmoid activation functions, respectively.
We train them using stochastic gradient descent (learning rate $0.01$), and stop the iteration when the validation error stalls on an inner validation set. As for ELM, we use 5-fold cross-validation for the number
of hidden nodes.
\end{itemize}
\paragraph{Discussion.}
The results are presented in Table \ref{tab:real_data_experiments}.
It is helpful to divide these estimators into two groups. The first group consists of simple estimators (kNN and linear regression) and of estimators that use a reduced ($1D$) representation of the data (NSIM, SIR and Isotron).
The second group are shallow neural networks which search for an estimator in a considerably richer class of functions.
Among the first group, NSIM variants achieve very convincing results as they always belong to the best performing group of
estimators. Moreover, experiments suggest that our approach adapts well to the complexity of a given data set. For example,
on a data set where linear regression performs best (\texttt{Istanbul}), NSIM achieves
roughly the same performance, and automatically chooses (most of the time) $J =1$.
On the other hand, for the \texttt{Concrete} data set, where all models that use a single index vector perform rather poorly,
the added model flexibility of the NSIM approach proves beneficial, and we achieve the same performance
as kNN, despite reducing the dimensionality. This is not the case for \texttt{SIR-kNN} and
\texttt{Isotron}, both of which use a \textit{linear} $1D$ projection.
{Finally, on \texttt{Air Quality} and \texttt{Yacht}, \texttt{NSIM-stat} achieves superior performance
 while leveraging the enhanced model flexibility with $J \approx 5$ level sets.}

Estimators in the second group enjoy a greater model flexibility, but are at the same time more prone to overfitting. For data sets
with a lot of samples (\texttt{Air Quality}, \texttt{Concrete}, and \texttt{Skillcraft}), these
methods are better than the estimators in the first group.
On the other hand for data sets with smaller sample sizes (\texttt{Istanbul} and \texttt{Yacht}), the model
can not be fitted easily, and we observe exactly the opposite effect.
Considering the results for the \texttt{Ames} data set, all estimators perform roughly the same.

\begin{figure}[]
  \centering
\subfloat[\texttt{Air Quality}]{\includegraphics[trim={0cm 4.cm 0 4.cm},clip, width=0.8\linewidth]{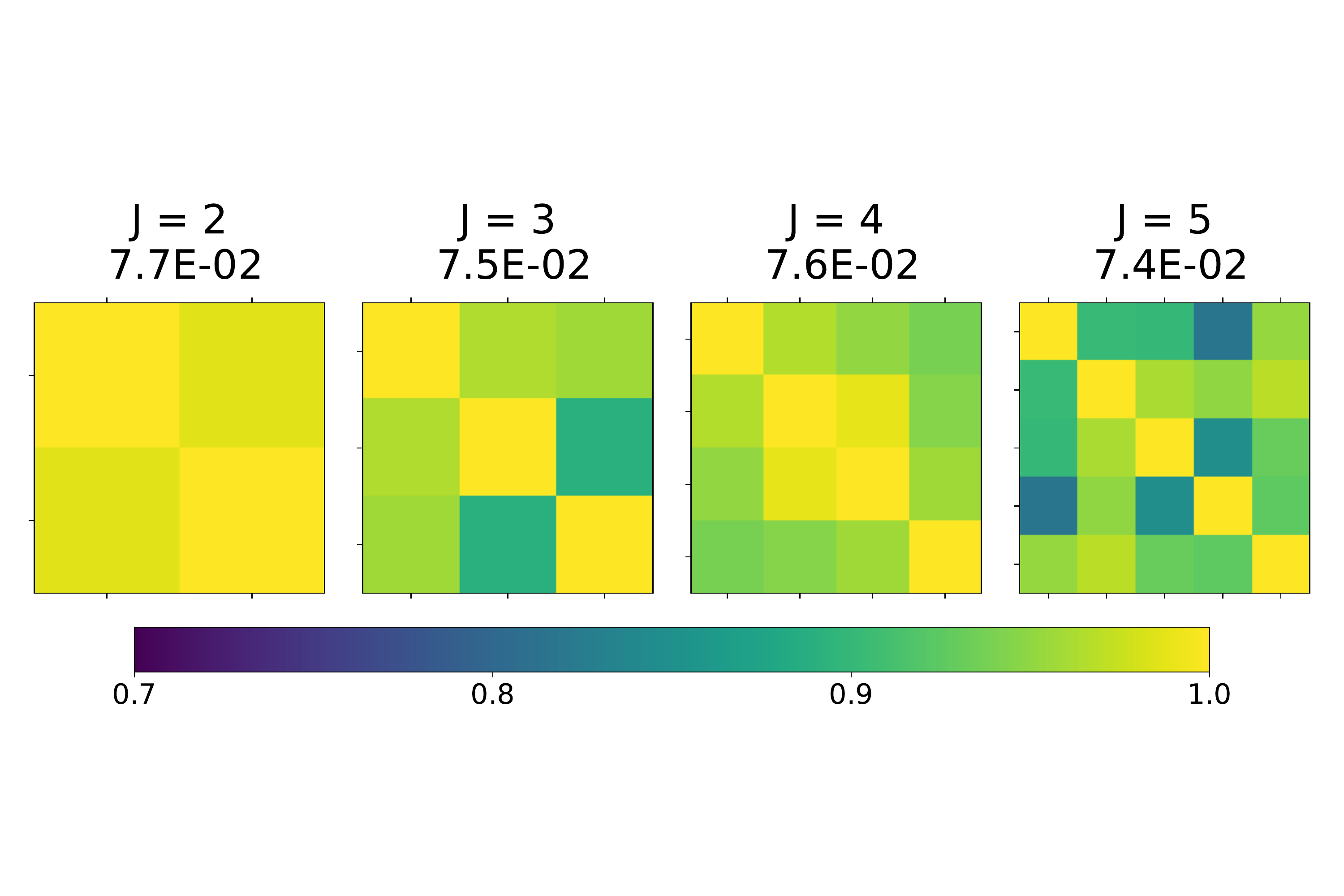}\label{fig:airquality}}\\\noindent
\subfloat[\texttt{Concrete}]{\includegraphics[trim={0cm 4.cm 0 4.cm},clip, width=0.8\linewidth]{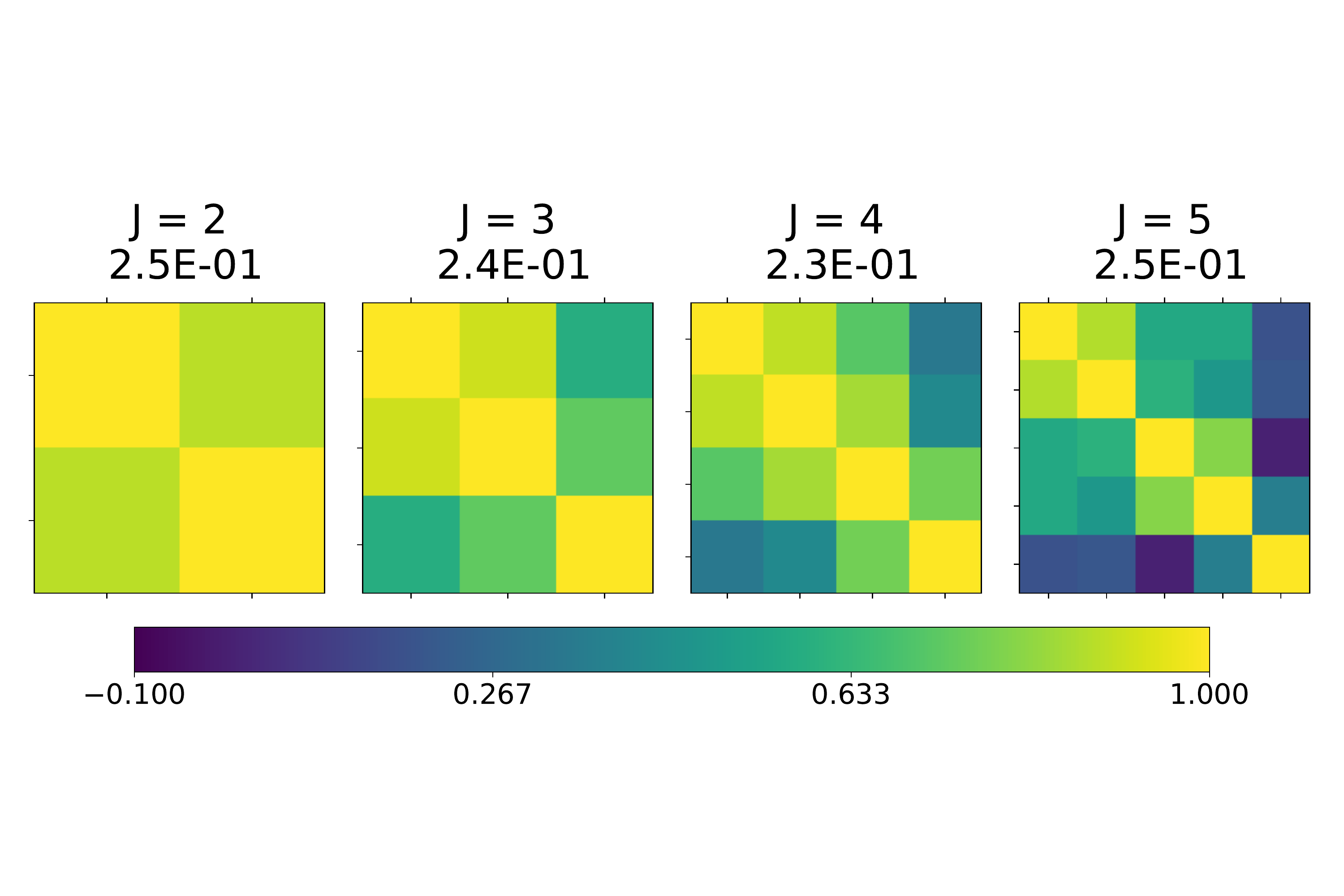}\label{fig:concrete}}\\\noindent
\subfloat[\texttt{Skillcraft}]{\includegraphics[trim={0cm 4.cm 0 4.cm},clip, width=0.8\linewidth]{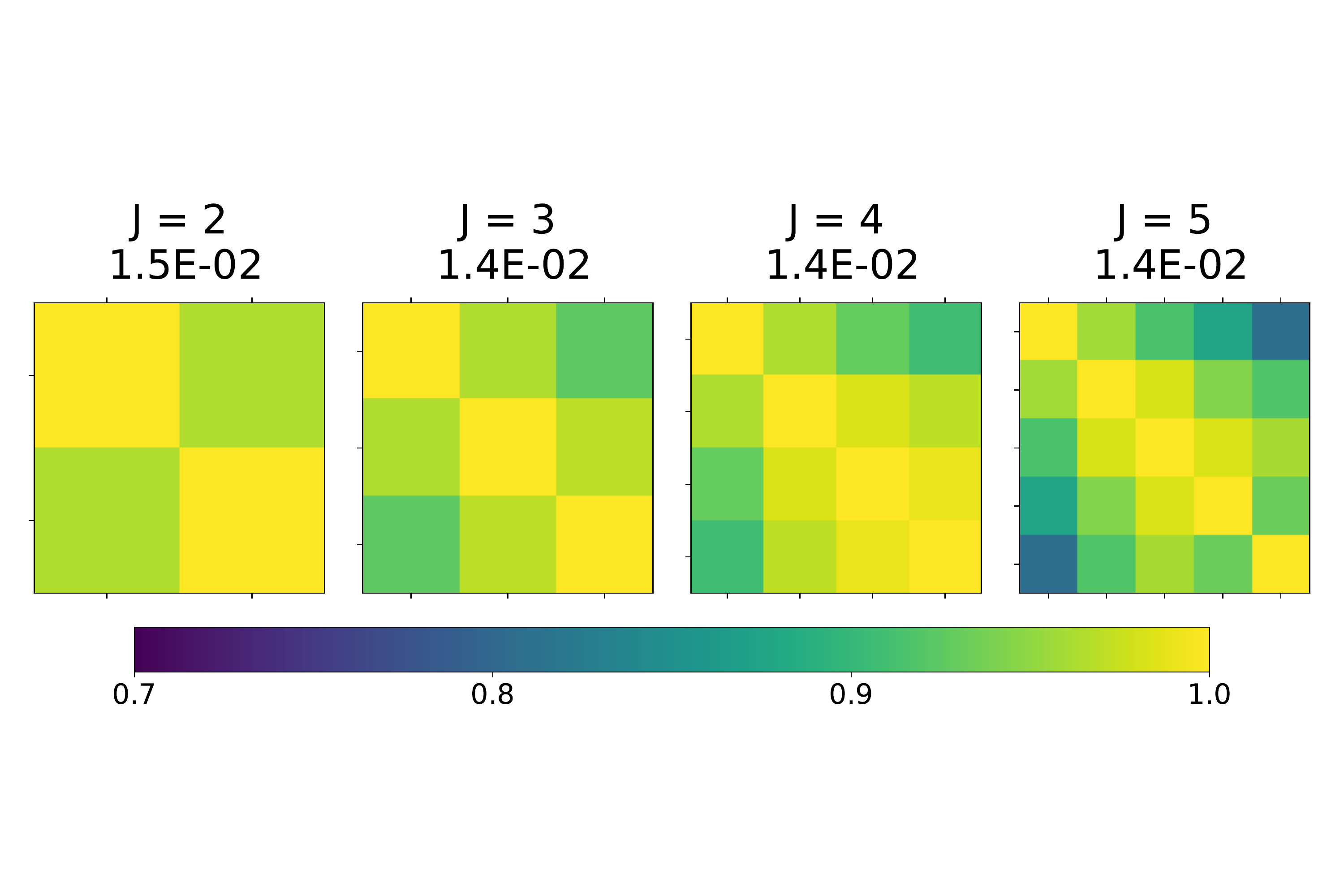}\label{fig:skillcraft}}
\caption{The Grammian matrices $G \in \bbR^{J \times J}$ of local index vectors for different data sets and different parameters $J$. The second value in the title is the relative prediction error when using parameter $k$ as cross-validated in Table \ref{tab:real_data_experiments}. Similarity of two local index vectors $\hat a_j$  and $\hat a_i$ for $(X,Y) | Y \in \CR_i$ and $(X,Y) | Y \in \CR_j$ is implied if the $(i,j)$ entry of $G$ is close to one. Since neighboring entries are conditioned on neighboring sets $\CR_j$, the similarity is usually inverse proportional to the distance  $\SN{i-j}$.}
\label{fig:interpretability}
\end{figure}

\paragraph{Interpretability.}
An important feature of the SIM is its interpretability, because the recovered index vector
describes the relationship between each feature and the response $Y$. Namely, the $i$-th entry
of the index vector $\hat a$ should have a large magnitude if the corresponding
feature has a strong influence on $Y$ (relative to other features), and its sign
indicates if the feature increases or decreases Y (when keeping other entries fixed).
NSIM retains these properties and allows for a more refined analysis, since it
considers conditional distributions, $X|\CR_j$, for different ranges of the response.
By inspecting and comparing local index vectors we can thus analyze whether the
influence of features changes across different regimes.

To that end, we propose to study off-diagonal entries of Grammian matrices,
$G \in \bbR^{J\times J}$, where $G_{ij}=\hat a_i^\top \hat a_j$, after fitting the model for a range of $J$’s.
If $G_{ij}\approx 1$ everywhere, and for all $J$, then the model most likely follows
the traditional, monotone SIM. On the other hand, if roughly $G_{ij}\asymp \SN{i-j}^{-1}$,
then local index vectors indeed vary, with certain regularity, as a function of $Y$.

In Figure \ref{fig:interpretability} we plot the results for this method on
\texttt{Air Quality}, \texttt{Concrete}, and \texttt{Skillcraft} data sets.
We see that the the pair-wise similarity $G_{ij}$ is indeed inverse proportional to $\SN{i-j}^{-1}$,
suggesting that NSIM fits the data better than SIM. Results in Table \ref{tab:real_data_experiments}
confirm this, by showing that NSIM outperforms SIM-based estimators (\texttt{Lin-Reg}, \texttt{SIR-kNN}, and \texttt{Isotron}).


\section{Conclusions}\label{sec:conclusion}
In this paper we propose a nonlinear relaxation of the single index model for data sets with inherent monotonicity between
features and outputs. We propose to estimate the model by combining localization through level set partitioning,
local linear regression and a kNN-regressor for out-of-sample prediction.
Our theoretical results provide guarantees on the error of the quantization of the tangent field of $\Im(\gamma)$, and yield guarantees for out-of-sample prediction.
In the noise free case we provide optimal learning rates, while in the noisy case we generally have a biased estimator. If the NSIM reduces to
a SIM, \emph{i.e.} if $\Im(\gamma)$ is a straight line, we recover the optimal learning rates for estimating the SIM also in the noisy case.

Our numerical experiments show that the NSIM estimator yields superior results when compared to estimators of similar model complexity.
Moreover, the estimator outperforms shallow neural network models on data sets with rather few samples.
On the other hand, if the data sets are sufficiently rich to properly fit shallow networks models, their
additional flexibility pays off and NSIM does not achieve similar predictive accuracy.
Consequently, our future research direction aims at further enhancing the model space of our estimator,
by replacing kNN with more sophisticated regressors and learning multiple index
vectors, \emph{i.e. multi index models}, in each level set.
\section*{Supplementary Materials}
Code to replicate the experiments in the article is available at IMAIAI online.
\section*{Funding}
This work was supported by the Research Council of Norway [251149/O70 to V.N.].
\section*{Acknowledgements}
T.K. thanks Prof. Mauro Maggioni, Stefano Vigogna and Alessandro Lanteri for helpful
discussions about the project.


\bibliographystyle{alpha}
\ifx\undefined\BySame
\newcommand{\BySame}{\leavevmode\rule[.5ex]{3em}{.5pt}\ }
\fi
\ifx\undefined\textsc
\newcommand{\textsc}[1]{{\sc #1}}
\newcommand{\emph}[1]{{\em #1\/}}
\let\tmpsmall\small
\renewcommand{\small}{\tmpsmall\sc}
\fi

\appendix
\section{Appendix}
\subsection{Proofs for Section \ref{sec:geometry}}
\label{subsec:nsim_app_proofs_sec_3}
This section is split into two parts. The first concerns a local analysis and
establishes Theorem \ref{thm:midpoint_estimation}.
The second part deals with the global analysis and proves Corollary \ref{cor:tangent_bound_all}.

\subsubsection{Local analysis}
\label{subsubsec:appendix_local_analysisz}
Before we begin with the proof of Theorem \ref{thm:midpoint_estimation} we collect some
required auxiliary results. All these results describe local phenomena, which means we
can consider consider a fixed, arbitrary closed interval $\CR \subset [0,1]$
with corresponding minimal $\CS := \CS(\CR) \subset \Im(\gamma)$ such that $\bbP(V \in \CS(\CR) | \CR) = 1$.
We denote $\bar t := \bbE[t|\CR]$, $a := \gamma'(\bar t)$, $\Ptan := aa^\top$, $\Pperp := \Id - \Ptan$.  For notational simplicity,
we do not use a subscript $\CR$ for e.g. $\Sigma, \kappa, N$ and so on, but keep in mind that
all quantities are understood locally.
We use $\lesssim$ to absorb universal numeric constants.

\paragraph{Auxiliary results}
The following result shows that the length of $\CR$ and $\CS$ are equivalent up to
the Lipschitz constant $L_f$, and provided $\CR \gg \sigma_{\varepsilon}$.

\begin{lemma}
\label{lem:bound_curve_segment}
Take an interval $\CR \subset \Im(f)$ and let $\CS \subset \Im(\gamma)$ be the shortest segment such that $\bbP(V \in \CS | Y \in \CR) = 1$.
Then $L_f^{-1} (\ssler - 2\sigma_{\varepsilon}) \leq \sslen \leq L_f (\ssler + 2\sigma_{\varepsilon})$.
\end{lemma}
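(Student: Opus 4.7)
The strategy is to transport the length inequality between $\CS$ and $\CR$ through the real interval $g(\CS)$, using only the bi-Lipschitz property of $g$ and the noise model $Y=g(V)+\varepsilon$. Since $g$ is continuous and $\CS$ is a connected arc of $\Im(\gamma)$, the image $g(\CS)\subset\bbR$ is a closed real interval, and the bi-Lipschitz bound \eqref{eq:g_bilipschitz} pins its length via $L_f^{-1}\SN{\CS}\leq\SN{g(\CS)}\leq L_f\SN{\CS}$. It therefore suffices to sandwich $\SN{g(\CS)}$ between $\SN{\CR}-2\sigma_\varepsilon$ and $\SN{\CR}+2\sigma_\varepsilon$, after which both inequalities drop out by dividing/multiplying by $L_f$.

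For the upper bound $\SN{g(\CS)}\leq\SN{\CR}+2\sigma_\varepsilon$, I would use \ref{ass:A5} to write $g(V)=Y-\varepsilon$ with $\SN{\varepsilon}\leq\sigma_\varepsilon$ almost surely. Conditional on $Y\in\CR$ this yields $g(V)\in\CR\oplus[-\sigma_\varepsilon,\sigma_\varepsilon]$ a.s. Minimality identifies $\CS$ with $\gamma$ applied to $[\essinf T,\,\esssup T]$, where $T=\gamma^{-1}\circ\pi_\gamma(X)$ under the conditional law $(X\mid Y\in\CR)$; continuity of $g\circ\gamma$ transfers this to $g(\CS)=[\essinf g(V),\,\esssup g(V)]$. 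Both endpoints lie in the closed inflation $\CR\oplus[-\sigma_\varepsilon,\sigma_\varepsilon]$, which has length $\SN{\CR}+2\sigma_\varepsilon$, giving the claim.

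For the lower bound $\SN{g(\CS)}\geq\SN{\CR}-2\sigma_\varepsilon$, the symmetric step gives $Y=g(V)+\varepsilon\in g(\CS)\oplus[-\sigma_\varepsilon,\sigma_\varepsilon]$ conditional on $Y\in\CR$, so the essential support of $Y\mid Y\in\CR$ is contained in $g(\CS)\oplus[-\sigma_\varepsilon,\sigma_\varepsilon]$. Since this conditional law is simply the restriction of the $Y$-marginal to $\CR$, and since the distributional hypothesis (absolute continuity of $X\mid Y\in\CR$ combined with continuity of $g\circ\pi_\gamma$) forces that restriction to charge every subinterval of $\CR$, we conclude $\CR\subseteq g(\CS)\oplus[-\sigma_\varepsilon,\sigma_\varepsilon]$ and hence $\SN{g(\CS)}\geq\SN{\CR}-2\sigma_\varepsilon$.

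The only delicate point is the minimality identification: one must check that the endpoints of $g(\CS)$ genuinely coincide with the essential extrema of $g(V)\mid Y\in\CR$ (equivalently, that no shorter closed sub-arc of $\CS$ carries a.s.\ mass), so that the containments above are tight on both sides. Once that bookkeeping is done, the rest of the proof is a direct application of the bi-Lipschitz inequality, and no substantive analytic obstacle arises.
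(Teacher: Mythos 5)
Your proof is correct and follows the same route as the paper's: both transport the length of $\CS$ through the bi-Lipschitz link $g$ and control the discrepancy between $g(V)$ and $Y$ via the almost-sure noise bound $\SN{\varepsilon}\leq\sigma_\varepsilon$. The paper argues pairwise, taking a supremum of $d_\gamma(V,V')$ for the upper bound and an extremal pair with $\SN{Y-Y'}=\SN{\CR}$ for the lower bound, whereas you phrase the identical estimates as two-sided set containments between the interval $g(\CS)$ and the $\sigma_\varepsilon$-inflation of $\CR$ --- a purely cosmetic reformulation, though your version makes explicit the minimality/density bookkeeping that the paper leaves implicit.
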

\begin{proof}
For any $V, V'\in\CS$ we have $\SN{Y-Y'} - 2\sigma_\epsilon\leq \SN{f(V)-f(V')}\leq \SN{Y-Y'} + 2\sigma_\epsilon$, almost surely, where $Y,Y'$ are such that $Y=f(V)+\epsilon$, $Y'=f(V')+\epsilon'$.
Using \eqref{eq:g_bilipschitz} we have
\[ d_\gamma(V, V') \leq  L_f \SN{f(V)-f(V')}\leq L_f\LRP{\SN{Y-Y'} + 2\sigma_\epsilon}\leq L_f\LRP{\SN{\CR} + 2\sigma_\epsilon},\]
and the upper bound follows after taking the supremum over $V,V'$.
For the converse, taking $(X,Y),(X',Y')$ be such that $\SN{Y-Y'}=\SN{\CR}$, we have
\[ d_\gamma(V, V') \geq L_f^{-1} \SN{f(V)-f(V')} \geq L_f^{-1}\LRP{\SN{\CR} -2\sigma_\epsilon}.\]
\end{proof}
\noindent
Next we provide some basic bounds on spectral properties of the conditional
covariance matrix. We use in the proof that a random vector $Z$ satisfying $\N{Z-Z'} \leq M$
almost surely, where $Z'$ is an independent copy of $Z$, satisfies
$\N{\Covv{Z}} \leq \bbE\N{Z-\bbE Z}^2 = 1/2\bbE\N{Z-Z'}^2 \leq 1/2M^2$.

\begin{lemma}
\label{lem:technical_inequalities}
Let \ref{ass:A5}, \ref{ass:A1} and \ref{ass:A2} hold. Take an interval $\CR \subset \Im(f)$ and let $\CS \subset \Im(\gamma)$ be the shortest
segment such that $\bbP(V \in \CS| \CR) = 1$. Then the following holds:
\begin{align}
\label{eq:variance_bound_PW}
\N{\Covv{\Ptan W|\CR}}&\leq B^2\curvtor^2 \sslen^2 \leq \sslen^2,\\
\label{eq:variance_bound_PV}
\N{\Covv{\Pperp V|\CR}}&\leq \bbE[\N{\Pperp (V-\bbE[V|\CR])}^2|\CR]  \leq 1/2 \curvtor^2 \sslen^{4},\\
\label{eq:variance_bound_PV_V}
\N{\Covv{V, \Pperp V|\CR}} &\leq 1/2 \curvtor \sslen^{3},\\
\label{eq:almost_sure_bounds_X_aX}
\N{X - \mu_X} \leq \boundnormal  +\sslen, \text{ and } &\SN{a^\top(X - \mu_X)} \leq 2\sslen\quad \textrm{almost surely}.
\end{align}
\end{lemma}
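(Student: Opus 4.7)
The plan is to unify all four bounds through two basic observations. First, \ref{ass:A1} gives $\bbE[W\mid V]=0$, hence $\bbE[W\mid\CR]=0$ by the tower property, so $\mu_X := \bbE[X\mid\CR] = \bbE[V\mid\CR] =: \mu_V$ and $X - \mu_X = (V - \mu_V) + W$. Second, since $\gamma$ is arc-length parametrized with $\curvtor=\N{\gamma''}_\infty$, the tangent varies slowly: $\N{a-\gamma'(t)}\leq \curvtor\SN{t-\bar t}\leq \curvtor\sslen$ for any $t$ with $\gamma(t)\in\CS$; combined with the orthogonality $W\perp\gamma'(t)$, this yields the key estimate
\[ \SN{a^\top W}=\SN{(a-\gamma'(t))^\top W}\leq \curvtor \boundnormal \sslen \quad \text{a.s.} \]
Throughout I will use the standard consequence of the definition of reach, $\curvtor\reach\leq 1$, so that \ref{ass:A2} ($\boundnormal<\reach$) gives $\boundnormal\curvtor<1$.

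For the almost-sure bounds \eqref{eq:almost_sure_bounds_X_aX}, I would first argue $\N{V-\mu_V}\leq\diam(\CS)\leq\sslen$ (Euclidean distance is dominated by arc length, and then apply it inside $\N{V-\bbE V'}\leq \bbE\N{V-V'}$), and combine with $\N{W}\leq \boundnormal$ to obtain $\N{X-\mu_X}\leq \boundnormal+\sslen$. For the tangential component, the pieces $\SN{a^\top(V-\mu_V)}\leq\sslen$ and $\SN{a^\top W}\leq \boundnormal\curvtor\sslen$ add to $(1+\boundnormal\curvtor)\sslen\leq 2\sslen$, thanks to $\boundnormal\curvtor<1$.

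For \eqref{eq:variance_bound_PW}, since $\Ptan W = (a^\top W)a$ and $\SN{a^\top W}\leq \curvtor \boundnormal\sslen$, I would bound $\N{\Covv{\Ptan W\mid\CR}}\leq \bbE[\N{\Ptan W}^2\mid\CR]\leq \boundnormal^2\curvtor^2\sslen^2$, and then use $\boundnormal\curvtor\leq 1$ for the final inequality. For \eqref{eq:variance_bound_PV}, a Taylor expansion $V = \gamma(\bar t)+a(t-\bar t)+R$ with $\N{R}\leq \tfrac12\curvtor(t-\bar t)^2$ and $\Pperp a=0$ gives $\Pperp(V-\gamma(\bar t))=\Pperp R$, hence $\N{\Pperp(V-\gamma(\bar t))}\leq\tfrac12\curvtor\sslen^2$. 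Since the covariance is invariant under deterministic shifts,
\[ \bbE[\N{\Pperp(V-\mu_V)}^2\mid\CR]\leq \bbE[\N{\Pperp(V-\gamma(\bar t))}^2\mid\CR]\leq \tfrac14\curvtor^2\sslen^4\leq \tfrac12\curvtor^2\sslen^4, \]
and this trace bound dominates the spectral norm, settling the chain \eqref{eq:variance_bound_PV}. Finally, \eqref{eq:variance_bound_PV_V} will follow from a bilinear Cauchy--Schwarz: for any unit $u,w$,
\[ u^\top\Covv{V,\Pperp V\mid\CR}w \leq \sqrt{\N{\Covv{V\mid\CR}}}\sqrt{\N{\Covv{\Pperp V\mid\CR}}}, \]
with $\N{\Covv{V\mid\CR}}\leq\sslen^2$ (from $\N{V-\mu_V}\leq\sslen$) and the $\tfrac14\curvtor^2\sslen^4$ bound just obtained, yielding $\tfrac12\curvtor\sslen^3$. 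The main subtlety is getting the sharp constant $2$ in \eqref{eq:almost_sure_bounds_X_aX}; everything else reduces to second-order Taylor control of $\gamma$ around $\bar t$ and the orthogonality $\gamma'(t)\perp W$.
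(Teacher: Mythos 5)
Your proposal is correct and essentially mirrors the paper's argument: the key pointwise estimate $\SN{a^\top W}\leq\curvtor\boundnormal\sslen$ (via $W\perp a(V)$) and the Cauchy--Schwarz step for \eqref{eq:variance_bound_PV_V} are the same, and you supply the short verification of \eqref{eq:almost_sure_bounds_X_aX} that the paper leaves implicit. The one genuine variation is in \eqref{eq:variance_bound_PV}: you Taylor-expand $\gamma$ around $\bar t$ and use that the conditional mean minimizes mean-squared distance to a constant, whereas the paper uses the independent-copy identity $\N{\Covv{Z}}\leq\tfrac12\bbE\N{Z-Z'}^2$ combined with $\Pperp(V-V')=\int_{t'}^{t}\Pperp\left(\gamma'(s)-\gamma'(\bar t)\right)ds$. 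Both are elementary and yield the stated constant (yours is in fact a factor two sharper). One precision point worth fixing: $\bbE[W\mid V]=0$ from \ref{ass:A1} does not yield $\bbE[W\mid Y\in\CR]=0$ by the tower property alone, since $\sigma(V)$ and $\sigma(\{Y\in\CR\})$ are not nested. You also need the conditional independence $\varepsilon\independent X\mid V$ from \ref{ass:A5}: it gives $\bbE[\mathbf{1}_{\{Y\in\CR\}}\mid V,W]=\bbE[\mathbf{1}_{\{Y\in\CR\}}\mid V]$, whence $\bbE[W\,\mathbf{1}_{\{Y\in\CR\}}]=\bbE\big[\bbE[W\mid V]\,\bbE[\mathbf{1}_{\{Y\in\CR\}}\mid V]\big]=0$. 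Since \ref{ass:A5} is among the lemma's hypotheses (and the paper cites it at exactly this step), your conclusion $\mu_X=\mu_V$ stands, but the invocation should mention \ref{ass:A5} rather than rely on the tower property alone.
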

\begin{proof}
\noindent For \eqref{eq:variance_bound_PW} we use $a(V) \perp W$ and \ref{ass:A2} to get $\SN{a^\top W} = \SN{(a-a(V))^\top W} \leq \kappa \sslen B$.
Since $\bbE[W|Y] = 0$ by \ref{ass:A5} and \ref{ass:A1} it follows that $\Var{a^\top W|\CR} =\bbE[(a^\top W)^2|\CR] \leq (B\kappa \sslen)^2$.
\noindent For \eqref{eq:variance_bound_PV}, we have by the fundamental theorem of calculus and $\Pperp \perp \gamma'(\bar t)$
\begin{align*}
\N{\Covv{\Pperp V|\CR}}& \leq \frac{1}{2} \bbE\left[\N{\Pperp (V-V')}^2|\CR\right]\leq \frac{1}{2} \bbE\left[\left(\int_{t'}^{t}\N{\Pperp \left(\gamma'(s) - \gamma'(\bar t)\right)}ds\right)^2|\CR\right] \leq \frac{1}{2}\curvtor^2\sslen^4.
\end{align*}
\eqref{eq:variance_bound_PV_V} follows by the Cauchy-Schwarz inequality $\N{\Covv{V, \Pperp V|\CR}} \leq \sqrt{\N{\Covv{V|\CR}}\N{\Covv{\Pperp V|\CR}}}
$, $\N{\Covv{V|\CR}} \leq 1/2\sslen^2$ since $\N{V-V'} \leq \sslen$, and using \eqref{eq:variance_bound_PV}.
\end{proof}

\noindent
While upper bounds for spectral norms of covariance matrices are easily
obtained in the previous Lemma, lower bounds for variances are generally more challenging to establish.
In particular they have to rely on an assumption such as \ref{ass:A3},
which asserts that the marginal distribution of $V$ is (measure-theoretically)
equivalent to the uniform distribution. Our analysis in Section \ref{sec:geometry}
hinges on the relation $\Var{a^\top X, Y|\CR} \asymp \sslen\ssler$. The following two results
show that this is true for example if $f \in \CC^2$ and $\ssler \gg \sigma_{\varepsilon}$.
However we believe that more general conditions just relying on the monotonicity/bi-Lipschitz properties
of $f$ could be established.

\begin{lemma}
\label{lem:covariance_AV}
Let Assumptions \ref{ass:A5}, \ref{ass:A1} and \ref{ass:A3} hold.
For any interval $\CR \subset \Im(f)$ with $\ssler > 2\sigma_{\varepsilon}$ and $\CS \subset \Im(\gamma)$ as the shortest
segment such that $P(V \in \CS|\CR) = 1$ we have
\[
\frac{(1-\kappa \sslen)^2}{27{c_V}^4 L_f^{2}} (\SN{\CR} - 2\sigma_\epsilon)^2 \leq  \Var{a^\top V|\CR}\leq \Var{a^\top X|\CR}  \leq \frac{3}{2}\sslen^2
\]
\end{lemma}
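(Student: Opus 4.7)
The plan is to establish the three inequalities in order, using the decomposition $X=V+W$ for the upper two bounds and a linearization of $a^\top\gamma(t)$ about $t=\bar t$ for the lower bound.

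First, under \ref{ass:A5} and \ref{ass:A1} I would show $\bbE[W\lvert V,Y\in\CR]=\bbE[W\lvert V]=0$: the event $\{Y\in\CR\}=\{\varepsilon\in\CR-g(V)\}$ depends only on the noise, which is independent of $X$ given $V$ by \ref{ass:A5}, so conditioning on $\{Y\in\CR\}$ in addition to $V$ leaves $\bbE[X\lvert V]=V$ (from \ref{ass:A1}) unchanged. By the tower rule this gives $\text{Cov}(a^\top V,a^\top W\lvert\CR)=0$ and
\[\Var{a^\top X\lvert\CR}=\Var{a^\top V\lvert\CR}+\Var{a^\top W\lvert\CR},\]
which immediately yields the middle inequality. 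For the upper bound, $|a^\top(V-V')|\leq\|V-V'\|\leq d_\gamma(V,V')\leq\sslen$ gives $\Var{a^\top V\lvert\CR}\leq \tfrac12\sslen^2$, while \eqref{eq:variance_bound_PW} gives $\Var{a^\top W\lvert\CR}\leq B^2\kappa^2\sslen^2$; since $B<\reach$ by \ref{ass:A2} and $\kappa\reach\leq 1$ (standard reach bound for $\CC^2$ curves), we have $B\kappa<1$ and summing produces $\tfrac32\sslen^2$.

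The lower bound is the main obstacle. Since $\gamma$ is arc-length parametrized and $a=\gamma'(\bar t)$, I would write
\[a^\top\gamma(t)=a^\top\gamma(\bar t)+(t-\bar t)+m(t),\qquad m(t):=\int_{\bar t}^t\bigl(a^\top\gamma'(s)-1\bigr)\,ds.\]
The identity $1-a^\top\gamma'(s)=\tfrac12\|a-\gamma'(s)\|^2$ together with $\|a-\gamma'(s)\|\leq\kappa|s-\bar t|\leq\kappa\sslen$ implies $|m'(s)|\leq\kappa\sslen$, hence $|m(t)-m(t')|\leq\kappa\sslen|t-t'|$ and therefore $\sqrt{\Var{m(t)\lvert\CR}}\leq\kappa\sslen\sqrt{\Var{t\lvert\CR}}$. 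Since $\sqrt{\Var{\cdot\lvert\CR}}$ is a seminorm, the reverse triangle inequality gives
\[\sqrt{\Var{a^\top V\lvert\CR}}\geq\sqrt{\Var{t\lvert\CR}}-\sqrt{\Var{m(t)\lvert\CR}}\geq (1-\kappa\sslen)\sqrt{\Var{t\lvert\CR}}.\]

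It remains to show $\Var{t\lvert\CR}\geq\sslen^2/(27\,c_V^4)$, after which Lemma \ref{lem:bound_curve_segment} (valid when $\ssler>2\sigma_\varepsilon$) converts $\sslen$ into $L_f^{-1}(\ssler-2\sigma_\varepsilon)$ and produces the stated constant. For this I would partition the support $[t_-,t_+]$ of $t\lvert\CR$ (of length $\sslen$) into three equal subintervals and lower bound the conditional mass of each by combining \ref{ass:A3}---which yields the uniformity factor $c_V^{\pm 1}$ for $\rho_V$ on each third relative to $\CS$---with a control of the Radon--Nikodym derivative $d\rho_{V\lvert\CR}/d\rho_V\propto\bbP(Y\in\CR\lvert V)$, which is upper and lower bounded on $\CS$ thanks to $\ssler>2\sigma_\varepsilon$ guaranteeing that a macroscopic fraction of $\CS$ is deterministically compatible with $\{Y\in\CR\}$ (this contributes a second factor $c_V^{\pm 1}$, so $c_V^4$ in total). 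Since $\bar t$ lies in at most one of the three subintervals, a Paley--Zygmund argument on one of the remaining two (whose midpoint is at distance $\geq\sslen/3$ from $\bar t$) produces the factor $1/27=1/3^3$. The most delicate point is transferring uniformity from $\rho_V$ to the conditional $\rho_{V\lvert\CR}$; once those constants are extracted the rest is routine bookkeeping.
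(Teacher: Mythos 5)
Your first two steps are correct and essentially match the paper: the variance splits as $\Var{a^\top X\lvert\CR}=\Var{a^\top V\lvert\CR}+\Var{a^\top W\lvert\CR}$ because $\Covv{V,W\lvert\CR}=0$ under \ref{ass:A5} and \ref{ass:A1}, and then $\tfrac12\sslen^2 + B^2\kappa^2\sslen^2\leq\tfrac32\sslen^2$ (using $B\kappa<1$) gives the upper bounds. Your route to the factor $(1-\kappa\sslen)^2$ via the decomposition $a^\top\gamma(t)=a^\top\gamma(\bar t)+(t-\bar t)+m(t)$ and the reverse triangle inequality on $\sqrt{\Var{\cdot\lvert\CR}}$ is sound and is a genuine (slightly different) alternative to the paper, which instead symmetrizes, applies the mean value theorem to get $a^\top(V-V')=(t-t')\,a^\top\gamma'(t_\zeta)$, and then bounds $(a^\top\gamma'(s))^2\geq(1-\kappa\sslen)^2$ directly.

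The gap is in the final step, the lower bound on $\Var{t\lvert\CR}$. Two problems. First, your target, $\Var{t\lvert\CR}\geq\sslen^2/(27c_V^4)$, is false in general: it should be $(L^-)^2/(27c_V^4)$ where $L^-=\SN{\CI^-}$ is the length of the ``deterministic core'' $\CI^-=\gamma^{-1}\circ f^{-1}([\inf\CR+\sigma_\varepsilon,\,\sup\CR-\sigma_\varepsilon])$ on which $\bbP(Y\in\CR\mid V)=1$. When $\ssler$ is only slightly larger than $2\sigma_\varepsilon$, the conditional law of $t$ concentrates on $\CI^-$, which may be far shorter than $\CS$, and $\Var{t\lvert\CR}\asymp(L^-)^2\ll\sslen^2$. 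The lemma is then recovered because $L^-\geq L_f^{-1}(\ssler-2\sigma_\varepsilon)$ (not because $\sslen$ is bounded below via Lemma \ref{lem:bound_curve_segment}). Second, the bookkeeping in your thirds-and-mass argument is not right: the Radon--Nikodym derivative $d\rho_{V\lvert\CR}/d\rho_V=\bbP(Y\in\CR\mid V)/\bbP(Y\in\CR)$ is \emph{not} $c_V^{\pm1}$-bounded --- it vanishes at the endpoints of $\CS$ and its upper/lower bounds involve the length ratios $\lengamma/L^-$ and $\lengamma/\sslen$, not just $c_V$. A direct ``far third has mass $\geq$ (density lower bound)$\times$(length)'' argument yields something like $(L^-)^3/(Cc_V^2\sslen)$, which neither reproduces $c_V^4$ nor matches the claimed power of $(\ssler-2\sigma_\varepsilon)$. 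What the paper actually does is cleaner and sidesteps both issues: it uses Chebyshev in the form $\Var{t\lvert\CR}\geq c^2\,\bbP(\SN{t-\bar t}>c\mid\CR)$, bounds the small-ball probability via $\bbP(\SN{t-\bar t}\leq c\mid\CR)\leq\bbP(\SN{t-\bar t}\leq c)/\bbP(Y\in\CR)\leq 2cc_V^2/L^-$ (one $c_V$ from the unconditional small-ball mass, one from $\bbP(Y\in\CR)\geq c_V^{-1}L^-/\lengamma$), and then optimizes over $c$, yielding exactly $(L^-)^2/(27c_V^4)$ without ever needing to locate $\bar t$ relative to the core. You would reach the same endpoint by replacing your Paley--Zygmund sketch with this Chebyshev-plus-denominator argument.
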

\begin{proof}
Note that \ref{ass:A5} and \ref{ass:A1} imply $\Covv{V,W|\CR} = 0$ and therefore
$\Var{\EUSP{a}{X}|\CR} = \Var{\EUSP{a}{V}|\CR} + \Var{\EUSP{a}{W}|\CR}$.
The upper bound follows from \eqref{eq:variance_bound_PW} and the fact that $\N{V-V'} \leq \sslen$ almost surely,
for an independent copy $V'$ of $V$, implies $\Var{a^\top V|\CR} \leq 1/2 \sslen^2$.

\noindent
For the lower bound it suffices to concentrate on $\Var{\EUSP{a}{V}|\CR}$.
We first use the identity $\bbE \SN{Z-\bbE[Z]}^2 = 1/2\bbE \SN{Z - Z'}^2$ ($Z'$ is an independent copy of $Z$) to get
\begin{equation}
\begin{aligned}
\label{eq:aux_cov_lower_bound_1}
\Var{\EUSP{a}{V}|\CR} & = \frac{1}{2}\bbE\left[\left(a^\top\left(V-V'\right)\right)^2 |\CR\right] = \frac{1}{2}\bbE[(t-t')^2 (a^\top \gamma'(t_\zeta))^2 |\CR] \\
&\geq \frac{1}{2} \min\limits_{s : \gamma(s) \in \CS} (a^\top \gamma'(s))^2\bbE[(t-t')^2|\CR] =  \min\limits_{s : \gamma(s) \in \CS}  (a^\top \gamma'(s))^2\bbE[(t-\bar t)^2|\CR].
\end{aligned}
\end{equation}
The first term is bounded from below by $\left\langle a, \gamma'(s)\right\rangle \geq 1 - \curvtor \sslen$.
For the second term, we fix $c > 0$ (is optimized later) and use Chebyshev's inequality to get
$\bbE[(t-\bar t)^2|\CR] \geq c^2\bbP(\SN{t-\bar t} > c | \CR)$.
Let now $\CI^{-}$ be any interval satisfying $\bbP(Y \in \CR | V \in \gamma(\CI^{-}))= 1$.
Then by using \ref{ass:A3} it follows that
\begin{align*}
\bbP(\SN{t - \bar t} > c |\CR) &= 1- \bbP(\SN{t - \bar t} \leq c |\CR) \geq 1 - \frac{\bbP(\SN{t - \bar t} \leq c)}{\bbP(Y \in \CR)}
\geq 1 - \frac{2 c {c_V}^2}{\SN{\CI^{-}}}.
\end{align*}
Optimizing now over $c$ we find $c= 1/3{c_V}^{-2}\SN{\CI^{-}}$ gives the bound $\bbE[(t-\bar t)^2|\CR] \geq 1/27 {c_V}^4 \SN{\CI^{-}}^2$
which implies that we ought to make $\CI^{-}$ as large as possible. Clearly, this is the case when
setting $\CI^{-} := \gamma^{-1} \circ f^{-1}([\inf \CR +\sigma_\epsilon, \sup \CR -\sigma_\epsilon])$ with
$\SN{\CI^{-}} > L_f^{-1}(\SN{\CR} - 2\sigma_\epsilon)$.
\end{proof}

\begin{lemma}
\label{lem:covariance_AXY}
Let \ref{ass:A5}, \ref{ass:A1}, and \ref{ass:A3} hold.
If $f \in \CC^2(\Omega)$ for $\Omega := \{tv +(1-t)\bar \gamma : t \in [0,1], v \in \suppmarg\}$ and the Hessian satisfies $\sup_{x \in \Omega}\N{\nabla^2f (x)} \leq L_H$
we have
\begin{align*}
\Var{a^\top X, Y|\CR} \geq \frac{(1-\kappa \sslen)^2}{27c_V^4 L_f^{3}} (\SN{\CR} - 2\sigma_\epsilon)^2 - \frac{1}{2}\sslen \sigma_{\varepsilon} - \frac{L_H}{2}\sslen^3.
\end{align*}
\end{lemma}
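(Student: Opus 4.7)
The plan is to split $Y = f(X) + \varepsilon$ into signal and noise parts, reducing
\[
\Covv{a^\top X, Y|\CR} = \Covv{a^\top X, f(X)|\CR} + \Covv{a^\top X, \varepsilon|\CR},
\]
and to lower-bound the signal term while absorbing the noise term as a small perturbation.

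For the signal term I would Taylor-expand $f$ around the midpoint $\bar\gamma := \gamma(\bar t)$, using $f\in\CC^2(\Omega)$ and $\N{\nabla^2 f} \leq L_H$. Because $f = g\circ\pi_\gamma$ and the differential of $\pi_\gamma$ at $v\in\Im(\gamma)$ is the tangential projector $\gamma'(\gamma^{-1}(v))\gamma'(\gamma^{-1}(v))^\top$, one has $\nabla f(\bar\gamma) = g'(\bar t)\, a$; the Taylor remainder $R(X)$ satisfies $\SN{R(X)} \leq \tfrac{1}{2}L_H\N{X-\bar\gamma}^2$. Setting $\mu_X := \bbE[X|\CR]$ and using $\bbE[a^\top(X-\mu_X)|\CR]=0$ to kill both the constant and the cross-term $a^\top(\mu_X-\bar\gamma)\bbE[a^\top(X-\mu_X)|\CR]$ in the expansion, the identity reduces to
\[
\Covv{a^\top X, f(X)|\CR} = g'(\bar t)\,\Var{a^\top X|\CR} + \bbE\big[a^\top(X-\mu_X)R(X)\big|\CR\big].
\]
Since $g$ is bi-Lipschitz and oriented so that $g'>0$ on $\CI$, one has $g'(\bar t)\geq L_f^{-1}$; and because \ref{ass:A1} combined with \ref{ass:A5} forces $\Covv{V,W|\CR}=0$, we get $\Var{a^\top X|\CR}\geq \Var{a^\top V|\CR}$. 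Applying Lemma \ref{lem:covariance_AV} delivers the leading $(1-\kappa\sslen)^2(27 c_V^4 L_f^3)^{-1}(\SN{\CR}-2\sigma_\varepsilon)^2$ term. The remainder is dominated by $\tfrac{1}{2}L_H\,\bbE[\SN{a^\top(X-\mu_X)}\N{X-\bar\gamma}^2|\CR]$; inserting the almost-sure bounds $\SN{a^\top(X-\mu_X)}\lesssim\sslen$ and $\N{X-\bar\gamma}\lesssim\sslen$ from \ref{ass:A2} and Lemma \ref{lem:technical_inequalities} yields the $-\tfrac{L_H}{2}\sslen^3$ correction.

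For the noise term I would tower through $V$. By \ref{ass:A5}, $\varepsilon\independent W\,|\,V$, and since the event $\{Y\in\CR\}$ is determined by $(V,\varepsilon)$, the conditional distribution of $W$ given $(V,\CR)$ is the same as that of $W$ given $V$; combined with $\bbE[a^\top W|V]=0$ from \ref{ass:A1}, the $W$-contribution vanishes and
\[
\Covv{a^\top X, \varepsilon|\CR} = \Covv{a^\top V,\,\bbE[\varepsilon|V,\CR]\,|\,\CR}.
\]
As $\SN{\varepsilon}\leq\sigma_\varepsilon$, the same bound transfers to $\bbE[\varepsilon|V,\CR]$, so Popoviciu's inequality yields $\Var{\bbE[\varepsilon|V,\CR]|\CR}\leq\sigma_\varepsilon^2$; analogously $\Var{a^\top V|\CR}\leq\sslen^2/4$ because $a^\top V$ has range at most $\sslen$ over $\CS$. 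Cauchy-Schwarz then closes the noise bound at $\tfrac{1}{2}\sslen\sigma_\varepsilon$.

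The main obstacle is reconciling the ambient Taylor expansion with the intrinsic curve geometry: the $\tfrac{L_H}{2}\sslen^3$ bookkeeping leans on $\N{X-\bar\gamma}\lesssim\sslen$, which in turn combines the intrinsic bound $\N{V-\bar\gamma}\leq\sslen$ with the normal bound $\N{W}\leq B$ from \ref{ass:A2}, and on correctly identifying $\nabla f(\bar\gamma)$ with $g'(\bar t)a$. Matching the $\tfrac{1}{2}$ constant in the noise correction additionally requires the sharp Popoviciu inequality rather than a crude range-squared-over-two estimate.
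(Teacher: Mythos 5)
Your overall plan — split $Y = f(X)+\varepsilon$, Taylor at $\bar\gamma$, bound the noise covariance by $\tfrac{1}{2}\sslen\sigma_\varepsilon$ — tracks the paper, and the noise half (towering through $V$, then Popoviciu plus Cauchy--Schwarz) is correct and actually more explicit than the paper's terse one-line bound. The gap is in the signal remainder. You expand $f$ at $\bar\gamma$ with argument $X$, so the Lagrange remainder obeys $\SN{R(X)}\leq\tfrac12 L_H\N{X-\bar\gamma}^2$, and you then assert $\N{X-\bar\gamma}\lesssim\sslen$. But $X=V+W$ with $\N{V-\bar\gamma}\leq\sslen$ and $\N{W}\leq B$, and these add: $\N{X-\bar\gamma}\leq\sslen+B$. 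Since $B$ is a fixed constant that does not shrink with $J$, your remainder term is actually of order $\sslen(\sslen+B)^2\asymp\sslen B^2$, whereas the leading signal term $(\SN{\CR}-2\sigma_\varepsilon)^2\asymp\sslen^2$; so for small $\sslen$ the error you control swamps the quantity you want to bound from below and the lemma becomes vacuous. Your ``main obstacle'' paragraph claims the bound ``combines'' $\N{V-\bar\gamma}\leq\sslen$ with $\N{W}\leq B$ to yield $\lesssim\sslen$, but there is no such combination — the triangle inequality gives a sum, not a minimum.

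The paper avoids this by reducing to the curve before expanding: (A1) and (A5) give $\Covv{W,Y\,|\,\CR}=0$ via the law of total covariance, hence $\Covv{a^\top X,Y|\CR}=\Covv{a^\top V,Y|\CR}$, and only then is $f$ Taylor-expanded with argument $V$, for which $\N{V-\bar\gamma}\leq\sslen$ really is the only length scale. You can import this fix directly into your outline: since $f(X)=g\circ\pi_\gamma(X)=f(V)$, replace the expansion of $f(X)$ by the expansion of $f(V)$ around $\bar\gamma$ (the segment $[\bar\gamma,V]$ lies in $\Omega$ since $V\in\suppmarg\cap\Im(\gamma)$), and separately use $\Covv{a^\top W, a^\top V|\CR}=0$ and $\Covv{a^\top W, R(V)|\CR}=0$ to discard the $W$-contributions. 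That yields the $\tfrac{L_H}{2}\sslen^3$ term (with one caveat: the paper's factor $\tfrac12$ uses $\SN{a^\top V - \bbE[a^\top V|\CR]}\leq\sslen$ via the range of $a^\top V$ over $\CS$, whereas your bound $\SN{a^\top(X-\mu_X)}\leq 2\sslen$ from \eqref{eq:almost_sure_bounds_X_aX} would give $L_H\sslen^3$; after reducing to $V$ this is no longer an issue).
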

\begin{proof}
Assumptions  \ref{ass:A5} and \ref{ass:A1} imply $\bbE[W|Y] = 0$ and by the law of total covariance
\begin{equation}
\label{eq:cov_w_y_zero}
\Covv{W,Y|\CR} = \bbE_Y[\Covv{W,Y|Y} | \CR] + \Covvn{\bbE[W|Y], Y|\CR}{Y} = 0.
\end{equation}
Therefore we have $\Var{a^\top X, Y|\CR} = \Var{a^\top V, Y|\CR}$.
Furthermore, if $f \in \CC^2$ we can use the Taylor expansion of $f$ to rewrite for some $\zeta \in \bbR^D$
\[
f(V) - f(\bar \gamma) - (V-\bar \gamma)^\top \nabla f(\bar \gamma) = \frac{1}{2}(V-\bar \gamma)^\top \nabla^2 f(\zeta) (V-\bar \gamma).
\]
Using that $\nabla f$ is aligned with the tangent field of $\gamma$ (by choice
of the parametrization) we have $\nabla f(\bar  \gamma) = \N{\nabla f(\bar  \gamma)} a$ and we get
\begin{align*}
&\Var{a^\top V, f(V) |\CR} = \Var{a^\top V, f(V) - f(\bar \gamma) |\CR} \\
&\quad\quad\quad\quad=\Var{a^\top V, (V-\bar \gamma)^\top \nabla f(\bar \gamma)|\CR}  +
\frac{1}{2}\Var{a^\top V, (V-\bar \gamma)^\top \nabla^2 f(\zeta)(V-\bar \gamma)|\CR} \\
&\quad\quad\quad\quad\geq \N{\nabla f(\bar \gamma)}\Var{a^\top V|\CR} - \frac{L_H}{2}\sslen^3 \geq
L_f^{-1}\Var{a^\top V|\CR} - \frac{L_H}{2}\sslen^3.
\end{align*}
The result follows by Lemma \ref{lem:covariance_AV}, and $\Covv{a^\top V,\varepsilon |\CR}\leq \frac{1}{2}\sslen \sigma_{\varepsilon}$
which implies
\begin{align*}
\Var{a^\top V, Y |\CR}  \geq L_f^{-1}\Var{a^\top V|\CR} - \frac{1}{2}\sslen \sigma_{\varepsilon} - \frac{L_H}{2}\sslen^3.
\end{align*}
\end{proof}

\noindent
The last tool required for proving Theorem \ref{thm:midpoint_estimation} are the following concentration results for
mean and covariance estimation of bounded random variables.
\begin{lemma}
\label{lem:concentration_results}
Let $A\in \bbR^{d_A \times D}$ and $B \in \bbR^{d_B \times D}$, and assume $\N{A(X-\bbE X)} \leq C_A$,
$\N{B(X-\bbE X)} \leq C_B$ almost surely. Let $\hat{\bbE}_X$ be the sample mean,
and $\hat \Sigma$ the sample covariance from $N$ i.i.d. copies of $X$. For any $u > 0$,
we have
\begin{align}
\label{eq:directional_covariance}
&\bbP\left(\N{A(\bbE X - \smean{X})} \lesssim (1+u)C_A N^{-1/2}\right) \geq 1- \exp(-u),\\
&\bbP\left(\N{A\left(\Sigma -\hat{\Sigma}\right)B^\top} \lesssim C_A C_B(\log(D) + u) N^{-1/2}\right) \geq 1 - \exp(-u).
\end{align}
\end{lemma}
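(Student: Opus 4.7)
The plan is to derive both inequalities from standard concentration results for sums of independent bounded summands, in the spirit of Tropp's matrix Bernstein inequality \cite{Tropp}.

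For the first bound, I set $Z_i := A(X_i - \bbE X) \in \bbR^{d_A}$. By assumption the $Z_i$ are i.i.d., mean zero, and bounded by $C_A$ almost surely, and $A(\bbE X - \smean{X}) = -N^{-1}\sum_{i=1}^N Z_i$. A vector Bernstein inequality (e.g., applied via the Hermitian dilation in Tropp's framework, or a direct vector Bernstein tail bound for bounded summands) yields a tail of the form $\sqrt{u/N} + u/N$ up to absolute constants, which can be merged into the single rate $(1 + u) N^{-1/2}$ quoted in the statement.

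For the second bound, my plan is to use the identity $\hat\Sigma = N^{-1}\sum_i (X_i - \bbE X)(X_i - \bbE X)^\top - (\smean{X} - \bbE X)(\smean{X} - \bbE X)^\top$ to decompose
\[
A(\Sigma - \hat\Sigma) B^\top = \frac{1}{N}\sum_{i=1}^N M_i + A(\smean{X} - \bbE X)(\smean{X} - \bbE X)^\top B^\top,
\]
where $M_i := A\Sigma B^\top - A(X_i - \bbE X)(X_i - \bbE X)^\top B^\top$. The summands $M_i$ are i.i.d., centered, of size $d_A \times d_B$, satisfy $\N{M_i} \leq 2 C_A C_B$ (using $\N{A(X - \bbE X)(X-\bbE X)^\top B^\top} \leq C_A C_B$), and have matrix variance parameter at most $N C_A^2 C_B^2$ on either side. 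Tropp's matrix Bernstein inequality then yields, with probability at least $1 - \exp(-u)$,
\[
\N{N^{-1}\textstyle\sum_{i=1}^N M_i} \lesssim C_A C_B \LRP{\sqrt{(\log(d_A + d_B) + u)/N} + (\log(d_A + d_B) + u)/N}.
\]
The residual outer-product term has norm bounded by $\N{A(\smean{X} - \bbE X)} \N{B(\smean{X} - \bbE X)} \lesssim C_A C_B (1+u)^2/N$ by two applications of the first bound and a union bound, and is absorbed into the main term for $u \lesssim \sqrt{N}$.

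The only mildly technical point, and the one small obstacle, is matching the precise rate $C_A C_B(\log D + u) N^{-1/2}$ in the statement with the native sub-Gaussian/sub-exponential form $\sqrt{(\log D + u)/N}$ that matrix Bernstein produces. This is just a uniform coarsening valid once $\log(d_A+d_B) + u \geq 1$: replace $\sqrt{x/N}$ by $x/\sqrt{N}$ on that range, and absorb $d_A + d_B \leq 2D$ into the logarithm. No further difficulty is expected.
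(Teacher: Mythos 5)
Your proof is correct and follows essentially the same route as the paper for the second (and substantive) bound: identical decomposition into $A(\Sigma-\tilde\Sigma)B^\top$ plus the rank-one outer-product correction $A(\smean{X}-\bbE X)(\smean{X}-\bbE X)^\top B^\top$, followed by Tropp's matrix Bernstein applied to the i.i.d. centered summands. The only genuine difference is in the (short) first bound: the paper derives it from McDiarmid's bounded-differences inequality applied to the scalar $\N{A(\bbE X - \smean{X})}$ (together with $\bbE\N{A(\bbE X-\smean{X})}\le C_A N^{-1/2}$), whereas you invoke a vector Bernstein inequality. Both are standard, dimension-free, and give the same $(1+u)C_A N^{-1/2}$ rate; neither is preferable in this context. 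One small caution: if you truly route the vector bound through the Hermitian dilation and Tropp's matrix Bernstein, you pick up a spurious $\log(d_A+1)$ factor that is absent from the statement, so you should use a genuine (Hilbert-space, dimension-free) vector Bernstein or Pinelis-type inequality, as you in fact also mention; McDiarmid avoids the issue entirely. Your coarsening from $\sqrt{(\log D + u)/N}$ to $(\log D + u)/\sqrt{N}$ and the absorption of $d_A+d_B$ into $\log D$ match what the paper does implicitly.
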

\begin{proof}
The first bound is a standard result that follows from the bounded differences inequality \cite{mcdiarmid1989method}.
For \eqref{eq:directional_covariance} denote $\tilde \Sigma = \smean{(X - \bbE X)(X - \bbE X)^\top}$
and decompose the error into
\[
\N{A\left(\Sigma - \hat{\Sigma}\right)B} \leq \N{A\left(\Sigma - \tilde{\Sigma}\right)B} + \N{A(\hat\bbE X - \bbE X)}\N{(\hat\bbE X - \bbE X)^\top B}.
\]
By the first result in \eqref{eq:directional_covariance} the second term is of order $\CO( C_A C_B N^{-1})$ with probability $1-2\exp(u)$,
and can thus be neglected. For the first term, denote
$S_k := \frac{1}{N} A \tilde  X_k \tilde X_k^\top B -\frac{1}{N} A \Sigma B$ and $S := \sum_{k=1}^{N}S_k$, where $\tilde X_k = X_k - \bbE X$.
Since $\bbE[\tilde X_k \tilde X_k^\top]=\Sigma$ we have $\bbE[S_k] = 0$, and since $\tilde X_k$ and $\tilde X_j$ are independent for $k\neq j$ we get $\bbE[S_kS_j^\top]= \bbE[S_k]\bbE[S_j^\top]= 0$. Thus,
\begin{align*}
\bbE[SS^\top]&=\sum_{k=1}^N \bbE[S_kS_k^\top] + \sum_{k\neq j}\bbE[S_kS_j^\top] = \sum_{k=1}^N \bbE[S_kS_k^\top]
\end{align*}
{Since $\N{S_k}\leq 2N^{-1}C_A C_B$ holds  almost surely we have $\N{\bbE SS^\top}\leq 4 N^{-1}C_A^2 C_B^2$ and by an analogous argument we have the same bound for $\N{\bbE S^\top S}$.}
Thus, the variance statistic (cf. Remark \ref{rem:matrix_bernstein_version}) satisfies $\sqrt{m(S)} \leq 2 N^{-1/2}C_A C_B$ and Theorem \ref{thm:matrix_bernstein} yields the desired result.
\end{proof}

\paragraph{Proof of Theorem \ref{thm:midpoint_estimation}}

We prove a more detailed version of Theorem \ref{thm:midpoint_estimation} given as follows.
\begin{theorem}
\label{thm:midpoint_estimation_restated}
Let \ref{ass:A5}, \ref{ass:A1}, \ref{ass:A41} and \ref{ass:A2} hold.
Let $u > 1$, $\CR \subset [0,1]$ be a closed interval with $\ssler > 4\sigma_{\varepsilon}$, and
$\CS \subset \Im(\gamma)$ the smallest segment such that $P(V \in \CS|Y \in \CR) = 1$.
Denote $\minSignal := \Var{a^\top X, Y|\CR}(\ssler \sslen)^{-1} > 0$, and
assume that for some $\alpha \geq 0$, $\sigmadiff \geq 2\minSignal \sslen^{2-\alpha}$
\begin{align}
\label{eq:sim_p_bound_assumption}
\N{\Covv{\Ptan X, \Pperp X | \CR}} \leq \curvtor \sigmadiff \sslen^{1+\alpha}.
\end{align}
Furthermore denote the scalars $B_{+} := B + \lengamma$,
\[
\summaryCrit := \left(1-\frac{(\curvtor\sigmadiff\sslen^{\alpha})^{2}}{\minSignal^2\minSigma}\right)^{-1},\quad
\summaryMax := (1 \vee B_{+}^2)\left(\frac{1}{\minSignal^2} \vee \frac{1}{\minSigma} \vee 1\right)
\]
There exists a universal constant $C$ such that whenever $\summaryCrit < 3$ and
\begin{equation}
\label{eq:thm_3_requirement_on_N}
N \geq \max\left\{C\frac{L_f^4 \summaryMax^4 (1+\minSigmaY^{-1}\kappa \sslen^2)^2(\log(D) + u)^2}{(3-\summaryCrit)^2},D\right\}
\end{equation}
we have with probability $1-\exp(-u)$
\begin{align}
\label{eq:thm3_restated_equation}
\N{\hat a - a} \lesssim L_f \frac{\curvtor\sigmadiff  \sslen^{\alpha}}{(3-\summaryCrit)\minSignal^2 \minSigma} \ssler
+ \frac{L_f\summaryMax^2 (1+\minSigmaY^{-1}\kappa \sslen^2)}{3-\summaryCrit} \frac{\log(D) + u}{\sqrt{N}}\ssler.
\end{align}
\end{theorem}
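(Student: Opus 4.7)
The plan is to decompose $\hat a = \hat b/\|\hat b\|$ through a population–sample split $\hat b = b + (\hat b - b)$, where $b := \Sigma^{\dagger}\Covv{X,Y|\CR}$ is the population regression vector, and to exploit the elementary bound $\|\hat a - a\| \leq 2\|\Pperp \hat b\|/|a^\top \hat b|$, valid once $a^\top \hat b > 0$. The task then reduces to an upper bound on $\|\Pperp \hat b\|$ and a matching lower bound on $a^\top \hat b$, each split into a deterministic (bias) part coming from $b$ and a stochastic (variance) part coming from $\hat b - b$.

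For the population step, \ref{ass:A5} together with \ref{ass:A1} lets me collapse conditional moments of $X$ onto those of $V = \pi_\gamma(X)$: a direct conditioning argument gives $\Covv{X,Y|\CR}=\Covv{V,Y|\CR}$ and $\Sigma = \Covv{V|\CR} + \bbE[\Covv{W|V}|\CR]$. I then write $\Sigma = \Sigma_d + \Delta$ with $\Sigma_d := \Ptan\Sigma\Ptan + \Pperp\Sigma\Pperp$ and $\|\Delta\| \leq 2\kappa\sigmadiff\sslen^{1+\alpha}$ by \eqref{eq:sim_p_bound_assumption}. The block structure of $\Sigma_d^{\dagger}$ produces exactly $a^\top \Sigma_d^{\dagger}\Covv{V,Y|\CR} = \Var{a^\top X,Y|\CR}/\Var{a^\top X|\CR} \asymp \minSignal\ssler/\sslen$ via Lemma \ref{lem:covariance_AV} and the definition of $\minSignal$, while $\|\Pperp \Sigma_d^{\dagger}\Covv{V,Y|\CR}\| \lesssim \kappa\sslen^2\ssler/\minSigma$ follows from \eqref{eq:variance_bound_PV} and Cauchy–Schwarz. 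A Wedin-type pseudoinverse perturbation \cite{wedin1973perturbation} transfers these estimates from $\Sigma_d^{\dagger}$ to $\Sigma^{\dagger}$, introducing the factor $\summaryCrit = (1-(\kappa\sigmadiff\sslen^\alpha)^2/(\minSignal^2\minSigma))^{-1}$; the assumption $\summaryCrit<3$ is exactly the quantitative form of \ref{ass:A42} needed for the tangential direction to remain dominant, and yields a population bias $\|\Pperp b\|/|a^\top b| \lesssim L_f\kappa\sigmadiff\sslen^\alpha\ssler/((3-\summaryCrit)\minSignal^2\minSigma)$.

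For the sample step, I write
\begin{equation*}
\hat b - b = (\hat\Sigma^{\dagger}-\Sigma^{\dagger})\,M + \hat\Sigma^{\dagger}(\hat M - M), \qquad M := \Covv{X,Y|\CR},
\end{equation*}
and apply Lemma \ref{lem:concentration_results} together with the almost-sure bounds \eqref{eq:almost_sure_bounds_X_aX} to get rates of order $(\log D + u)/\sqrt N$ for both $\|\hat\Sigma - \Sigma\|$ and $\|\hat M - M\|$, with scales controlled by $B+\SN{\CI}$ and $\sslen$. The sample-size hypothesis \eqref{eq:thm_3_requirement_on_N} is precisely what is needed so these perturbations are small relative to the smallest relevant eigenvalues (the tangential $\Var{a^\top X|\CR}$ and the normal $\minSigma$), which in turn allows pseudoinverse perturbation to be applied stably on the common range of $\Sigma$ and $\hat\Sigma$, keeps $a^\top\hat b \gtrsim \minSignal\ssler/\sslen$, and makes the variance contribution inherit the same denominator $(3-\summaryCrit)$ as the bias.

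The main obstacle is the coupled role of the cross-covariance term $\Delta$: it enters both the population analysis (perturbing $\Sigma^{\dagger}$ away from the block-diagonal $\Sigma_d^{\dagger}$) and the sample analysis (affecting the stability of $\hat\Sigma^{\dagger}$), and one must track how the relatively small tangential magnitude $\minSignal\ssler/\sslen$ survives both successive pseudoinverse perturbations without being cancelled by the much cruder off-diagonal contributions. The condition $\summaryCrit<3$ arises precisely as the threshold at which this tangential survival is guaranteed. Assembling the bias and variance of $\hat b$ and substituting into $\|\hat a - a\| \leq 2\|\Pperp \hat b\|/|a^\top\hat b|$ yields \eqref{eq:thm3_restated_equation}.
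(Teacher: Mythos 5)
Your proposal follows essentially the same route as the paper's proof: the quotient-rule estimate $\|\hat a - a\|\lesssim\|\Pperp\hat b\|/|a^\top\hat b|$ once $a^\top\hat b>0$, the population/sample split $\hat b = b + (\hat b - b)$, the block-diagonal comparison $\Sigma_d = \Ptan\Sigma\Ptan + \Pperp\Sigma\Pperp$ (the paper's $\Sigma_P$ in Lemma~\ref{lem:GI_pop}) with Wedin-type perturbation producing the factor $\summaryCrit$, and directional concentration bounds for $\hat\Sigma-\Sigma$ and $\hat r - r$ fed through pseudoinverse perturbation (Lemmas~\ref{lem:GI_sample}, \ref{lem:right_hand_side_new}). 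The one place where the real technical weight lies but you pass over it quickly is the block-wise perturbation of $\hat\Sigma^\dagger$: a crude bound $\|\hat\Sigma-\Sigma\|\lesssim B_+^2/\sqrt N$ is useless against the tiny tangential eigenvalue $\Var{a^\top X|\CR}\asymp\sslen^2$, so one must carry the $\sslen$-scaled bounds for $\Ptan(\hat\Sigma-\Sigma)\Ptan$, $\Ptan(\hat\Sigma-\Sigma)\Pperp$, $\Pperp(\hat\Sigma-\Sigma)\Pperp$ separately and resolve the coupled $2\times 2$ block system for $\Ptan(\hat\Sigma^\dagger-\Sigma^\dagger)\Ptan$ etc.\ via a Schur-complement / von Neumann series argument, which is where the paper's Lemma~\ref{lem:GI_sample} spends most of its effort.
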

\begin{proof}[Proof of Theorem \ref{thm:midpoint_estimation} from Theorem \ref{thm:midpoint_estimation_restated}]
We apply Theorem \ref{thm:midpoint_estimation_restated} with $\sigmadiff^* = \sigmadiff \vee 3L_f\minSigmaYcool{j}\SN{\CR_j}
> 2\minSigmaYcool{j}\SN{\CS_j}$, where the second inequality follows from
$\SN{\CS_j} \leq 3/2L_f \SN{\CR_j}$ (Lemma \ref{lem:bound_curve_segment}) and $\SN{\CR_j }> 4\sigma_{\varepsilon}$.
Algebraic manipulation reveals that $\summaryCrit < 3$ is implied by the first condition in \eqref{eq:thm3_conds},
and \eqref{eq:thm_3_requirement_on_N} is implied by the second condition in \eqref{eq:thm3_conds}.
The result follows by $\SN{\CS_j} \leq 3/2L_f \SN{\CR_j} \leq 3/2L_f J^{-1}$.
\end{proof}

\noindent
The proof of Theorem \ref{thm:midpoint_estimation_restated} is given at the end of this
section because it requires a few tools that we develop first. Bringing forward a step of the proof
already now, we obtain the estimate
\begin{equation}
\label{eq:aux_thm_3_initial}
\N{\hat a - a} \leq   \sqrt{2} \frac{\Vert \Pperp b\Vert + \Vert \Pperp (\hat b - b)\Vert}{\N{\Ptan b} - \Vert \Ptan(\hat b - b)\Vert}.
\end{equation}
Thus, it suffices to bound $\Vert\Pperp b\Vert$, $\Vert\Ptan(\hat b-b)\Vert$ and
$\Vert\Pperp(\hat b-b)\Vert$ from above. In order to achieve optimal dependencies
of the bounds with respect to both $\sslen$ (or $\ssler$) and $N$,
we have to decompose $\Pperp b$, $\Ptan(\hat b-b)$ and $\Pperp(\hat b-b)$ into separate terms that
reflect how $\Sigma, \hat\Sigma, \Sigma^{\dagger}$ and $ \hat \Sigma^{\dagger}$ act on $b$ and $\hat b$.
This requires three tools: first we analyze spectral norms of $\Sigma^{\dagger}$
when paired with directions $\Ptan$, $\Pperp$ (Lemma \ref{lem:GI_pop}). Then
we need to bound perturbations $A(\hat \Sigma^{\dagger}-\Sigma^{\dagger})B$ for $A, B \in \{\Ptan, \Pperp\}$
to control the deviation of $\hat \Sigma^{\dagger}$ to $\Sigma^{\dagger}$ (Lemma \ref{lem:GI_sample}).
Finally, we need to analyze $r = \Covv{X,Y|\CR}$ since $b = \Sigma^{\dagger}r$, and similarly
we require concentration bounds of the finite sample counterpart $\hat r$ around $r$ (Lemma \ref{lem:right_hand_side_new}).
These results are then combined to prove Theorem \ref{thm:midpoint_estimation_restated}.

We begin by analyzing spectral bounds for  $\Sigma$.
It will be convenient to use $\minLambda:= 4\minSignal^2$ instead of $\minSignal$
since $\minLambda$ satisfies the relation $\Var{a^\top X|\CR} \geq \minLambda\sslen^2$
as we will see below.

\begin{lemma}
\label{lem:GI_pop}
If \eqref{eq:sim_p_bound_assumption}, \ref{ass:A41}, and $\summaryCrit < \infty$ hold we have
\begin{align}
\label{eq:GI_all}
\N{\Ptan\Sigma^{\dagger}\Ptan} &\leq \frac{\eta}{\minLambda \sslen^2},\quad
\N{\Pperp\Sigma^{\dagger} \Pperp} \leq \frac{\eta}{\minSigma},\quad
\N{\Ptan\Sigma^{\dagger}\Pperp}  \leq \frac{\eta \curvtor\sigmadiff}{\minLambda \minSigma \sslen^{1-\alpha}}.
\end{align}
\end{lemma}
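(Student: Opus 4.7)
The strategy is to view $\Sigma := \Covv{X|\CR}$ as a $2\times 2$ symmetric block operator with respect to the orthogonal decomposition $\bbR^D = \Im(\Ptan)\oplus \Im(\Pperp)$, and to apply the Schur-complement identity for the Moore--Penrose pseudo-inverse. The three bounds in \eqref{eq:GI_all} then follow from spectral estimates on the three blocks combined with elementary norm accounting.

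First I would establish the required block bounds. For the tangential block $\Sigma_{\Ptan\Ptan} := \Ptan\Sigma\Ptan = \Var{a^\top X|\CR}\,aa^\top$, Cauchy--Schwarz applied to $\Covv{a^\top X,Y|\CR} = \minSignal\sslen\ssler$ together with Popoviciu's inequality $\Var{Y|\CR}\leq \ssler^2/4$ (valid since $Y\in\CR$ almost surely) yields $\Var{a^\top X|\CR}\geq 4\minSignal^2\sslen^2 = \minLambda\sslen^2$. Assumption \ref{ass:A41} then gives $\N{\Sigma_{\Pperp\Pperp}^{\dagger}}\leq 1/\minSigma$ for the normal block $\Sigma_{\Pperp\Pperp}:=\Pperp\Sigma\Pperp$, while the hypothesis \eqref{eq:sim_p_bound_assumption} provides the cross-block bound $\N{\Sigma_{\Ptan\Pperp}}\leq \curvtor\sigmadiff\sslen^{1+\alpha}$.

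With these in hand, the classical PSD range inclusion $\Im(\Sigma_{\Ptan\Pperp})\subset\Im(\Sigma_{\Pperp\Pperp})$ (derivable from the factorization $\Sigma = \Sigma^{1/2}\Sigma^{1/2}$) legitimates the block Schur identity
\begin{align*}
\Ptan\Sigma^{\dagger}\Ptan &= S^{\dagger}, \qquad \Ptan\Sigma^{\dagger}\Pperp = -S^{\dagger}\Sigma_{\Ptan\Pperp}\Sigma_{\Pperp\Pperp}^{\dagger},\\
\Pperp\Sigma^{\dagger}\Pperp &= \Sigma_{\Pperp\Pperp}^{\dagger} + \Sigma_{\Pperp\Pperp}^{\dagger}\Sigma_{\Pperp\Ptan}S^{\dagger}\Sigma_{\Ptan\Pperp}\Sigma_{\Pperp\Pperp}^{\dagger},
\end{align*}
with Schur complement $S := \Sigma_{\Ptan\Ptan} - \Sigma_{\Ptan\Pperp}\Sigma_{\Pperp\Pperp}^{\dagger}\Sigma_{\Pperp\Ptan}$. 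Since $S$ is a rank-one symmetric operator supported on the line spanned by $a$, combining the block estimates gives
\[
\N{S^{\dagger}} \leq \frac{1}{\minLambda\sslen^2 - (\curvtor\sigmadiff\sslen^{1+\alpha})^2/\minSigma} = \frac{1}{\minLambda\sslen^2}\left(1 - \frac{(\curvtor\sigmadiff\sslen^{\alpha})^{2}}{\minLambda\minSigma}\right)^{-1} \leq \frac{\summaryCrit}{\minLambda\sslen^2},
\]
where the final step uses $\minSignal^2\leq \minLambda$ inside the denominator that defines $\summaryCrit$. The three claimed inequalities follow by plugging this into the Schur identity and taking operator norms entrywise: the first equals $\N{S^{\dagger}}$, the third is bounded by $\N{S^{\dagger}}\N{\Sigma_{\Ptan\Pperp}}\N{\Sigma_{\Pperp\Pperp}^{\dagger}}$, and for the second the triangle inequality produces an extra contribution of order $\summaryCrit(\curvtor\sigmadiff\sslen^{\alpha})^2/(\minLambda\minSigma^2)$ that is absorbed back into $\summaryCrit/\minSigma$ via the same algebraic identity.

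The main technical hurdle is the careful justification of the Schur-complement formula for the Moore--Penrose inverse when the blocks may be rank-deficient; once the standard PSD range inclusion is invoked, the remainder is a routine bookkeeping exercise to match the precise constants hidden in $\minLambda$, $\minSignal$ and $\summaryCrit$.
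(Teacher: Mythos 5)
Your Schur-complement route is a genuinely different approach from the paper's. The paper compares $\Sigma$ to the block-diagonal matrix $\Sigma_P := \Ptan\Sigma\Ptan + \Pperp\Sigma\Pperp$: its Step~1 establishes the range identity $\Im(\Sigma_P) = \Im(\Sigma)$; Step~2 reads off the spectral decomposition of $\Sigma_P^{\dagger}$; and Step~3 transfers the bounds to $\Sigma^{\dagger}$ via the Wedin identity $\Sigma^{\dagger} = \Sigma_P^{\dagger} - \Sigma^{\dagger}\Delta\Sigma_P^{\dagger}$ followed by a von Neumann series. You replace Steps~2 and~3 by the block pseudo-inverse (Banachiewicz) formula applied directly to $\Sigma$. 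That part of your algebra is sound: the use of $\minLambda = 4\minSignal^2$ to absorb $\bigl(1 - (\curvtor\sigmadiff\sslen^{\alpha})^2/(\minLambda\minSigma)\bigr)^{-1}$ into $\summaryCrit$ is correct, the cross bound is immediate, and the extra contribution $\summaryCrit(\curvtor\sigmadiff\sslen^{\alpha})^2/(\minLambda\minSigma^2)$ in the $\Pperp\Sigma^{\dagger}\Pperp$ estimate indeed collapses into $\summaryCrit/\minSigma$ by the same identity. So the Schur route does buy a more compact argument that avoids the series expansion.

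What you omit is the paper's Step~1, and that is a genuine gap. You assert that $\N{\Sigma_{\Pperp\Pperp}^{\dagger}}\leq 1/\minSigma$ follows from \ref{ass:A41}, but \ref{ass:A41} controls $v^\top\Sigma v$ only for unit vectors $v\in\Im(\Sigma)\cap\Im(\Pperp)$, whereas the nonzero eigenvalues of $\Sigma_{\Pperp\Pperp} = \Pperp\Sigma\Pperp$ are attained on $\Im(\Sigma_{\Pperp\Pperp}) = \Pperp\Im(\Sigma)$, which in general strictly contains $\Im(\Sigma)\cap\Im(\Pperp)$. The paper closes exactly this gap by proving $\Im(\Sigma_P) = \Im(\Sigma)$, which forces $\Im(\Sigma_{\Pperp\Pperp}) = \Im(\Sigma)\cap\Im(\Pperp)$ and makes \ref{ass:A41} applicable; that argument itself uses \ref{ass:A41} and $\summaryCrit<\infty$ in an essential way and is not a formality. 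The same range accounting is also what legitimizes your Schur pseudo-inverse identity in the possibly singular case (which the paper explicitly permits for $\Sigma = \Covv{X|\CR}$): the required conditions are $\Im(\Sigma_{\Pperp\Ptan})\subset\Im(\Sigma_{\Pperp\Pperp})$ and $\Im(\Sigma_{\Ptan\Pperp})\subset\Im(S)$. The inclusion you cite, $\Im(\Sigma_{\Ptan\Pperp})\subset\Im(\Sigma_{\Pperp\Pperp})$, has the wrong transpose and, as written, pairs a subspace of $\Im(\Ptan)$ with one of $\Im(\Pperp)$, so it can only hold trivially. If you reinstate the paper's Step~1 before invoking \ref{ass:A41} and the block formula, the rest of your route goes through.
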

\begin{proof}
Establishing \eqref{eq:GI_all} is challenging because the eigenspace of $\Sigma$
does not separate into eigenspaces related to $\Ptan$ and $\Pperp$. Instead, we have
to relate $\Sigma$ to the auxiliary matrix $\Sigma_P := \Covv{\Ptan X|\CR} + \Covv{\Pperp X|\CR}$.
Since we have
\[
\Sigma - \Sigma_{P} = \Covv{\Ptan X, \Pperp X | \CR} +  \Covv{\Pperp X, \Ptan X | \CR}
\]
Eqn. \eqref{eq:sim_p_bound_assumption} implies $\N{\Sigma - \Sigma_{P}} \leq 2\curvtor \sigmadiff \sslen^{1+\alpha}$,
which becomes small when $\sslen$ tends to $0$. Based on this observation we use the
following proof strategy: In the first step,
we show that $\Sigma$ and $\Sigma_P$ share the same range under the assumptions in the statement.
We can then derive the spectral decomposition of $\Sigma_P$ in the second step,
and use $\minSignal$ and $\minSigma$ from \ref{ass:A41} to bound spectral norms of  $\Sigma_P$.
In the third step we translate these bounds via perturbation theory to $\Sigma$.

\noindent
1. We show $\Im(\Sigma_P) = \Im(\Sigma)$.
First note that $\Im(\Sigma_P) = \Im(\Ptan \Sigma \Ptan ) \oplus \Im(\Pperp\Sigma \Pperp) \subset
\Im(\Ptan \Sigma) \oplus \Im(\Pperp\Sigma) = \Im(\Sigma)$, which implies that
it suffices to show $\rank(\Sigma_P) = \rank(\Sigma)$.
Since $\eta < \infty$ implies $\minSigmaY > 0$ and therefore $\Covv{\EUSP{a}{V}|\CR} > 0$, we have
$\rank(\Sigma_P) = \rank(\Ptan \Sigma_P \Ptan ) + \rank(\Pperp\Sigma_P \Pperp) = 1 + \rank(\Pperp\Sigma_P \Pperp)$.
To find a lower bound for $\rank(\Pperp\Sigma_P \Pperp)$, we note that, by \ref{ass:A41}, any unit norm $v \in \Im(\Sigma)\cap \Im(\Pperp)$
obeys
\[
v^\top \Pperp\Sigma_P \Pperp v = v^\top \Sigma_P v = v^\top \Sigma v > C_{\perp}.
\]
Therefore, $\rank(\Pperp\Sigma_P \Pperp) \geq \dim(\Im(\Sigma)\cap \Im(\Pperp))$. The result now follows by
$\dim(\Im(\Sigma)\cap \Im(\Pperp)) = \rank(\Sigma) - \dim(\Im(\Sigma)\cap \Im(\Ptan )) \geq \rank(\Sigma) - 1$.

\noindent
2. Denote $d = \rank(\Sigma) = \rank(\Sigma_P)$. By construction, the
eigendecomposition of $\Sigma_P$ is
\begin{equation*}
\Sigma_{P} = \Var{a^\top X|\CR} a a^\top + \sum\limits_{i=2}^{d} \sigma_iu_i u_i^\top,
\end{equation*}
where $\{u_2,\ldots,u_d\}$ is an eigensystem for $\Pperp\Sigma \Pperp$.
As $\Sigma_{P}^{\dagger}$ has the same eigen-decomposition with eigenvalues inverted, we have $\Ptan \Sigma_{P}^{\dagger}\Pperp = 0$.
Furthermore, $\Vert \Pperp \Sigma_{P}^{\dagger} \Pperp\Vert \leq 1/\minSigma$ follows by \ref{ass:A41}. For $\Ptan \Sigma_P^{\dagger}\Ptan$
using Popoviciu's inequality for the variance of the random variable $Y|\CR$ we get
\[
\Var{a^\top X|\CR} \geq \frac{\Var{a^\top X,Y|\CR}^2}{\Var{Y | \CR}} \geq 4\frac{\minSignal^2 \sslen^2 \ssler^2}{\ssler^2} = 4\minSignal^2 \sslen^2 = \minLambda\sslen^2,
\]
which implies $\Vert \Ptan \Sigma_P^{\dagger} \Ptan \Vert \leq (4\minSignal^2 \sslen^2)^{-1} = \minLambda^{-1}\sslen^{-2}$.

\noindent
3. Finally we transfer the bounds on $\Sigma_P$ to the true covariance matrix $\Sigma$. We use the shorthand $\Delta := \Sigma - \Sigma_P$.
We first note that $\Im(\Sigma) = \Im(\Sigma_P)$ implies the identity $\Sigma^{\dagger} =\Sigma_P^{\dagger} -\Sigma^{\dagger}\Delta \Sigma_P^{\dagger}$
by \cite{wedin1973perturbation}. Multiplying with $\Ptan, \Pperp$ in different combinations from left and right,
and using $\Ptan + \Pperp = \Id$, $P\Delta P = Q\Delta Q = 0$, and $\Ptan \Sigma_P^{\dagger} \Pperp = 0$ we obtain a system of equations
given by
\begin{align}
\label{eqn:aux_pop_dir_1}
\Pperp \Sigma^{\dagger} \Pperp &= \Pperp \Sigma_{P}^{\dagger} \Pperp - \Pperp \Sigma^{\dagger} \Ptan\Delta\Pperp\Sigma_{P}^{\dagger}\Pperp,\\
\label{eqn:aux_pop_dir_2}
\Ptan \Sigma^{\dagger}\Pperp &=-\Ptan  \Sigma^{\dagger}\Ptan\Delta\Pperp\Sigma_{P}^{\dagger}\Pperp,\\
\label{eqn:aux_pop_dir_3}
{\Ptan \Sigma^{\dagger} \Ptan} &={\Ptan\Sigma_{P}^{\dagger} \Ptan} - {\Ptan\Sigma^{\dagger} \Pperp}\Delta{\Ptan\Sigma_{P}^{\dagger}\Ptan}.
\end{align}
Consider now first $\Ptan\Sigma\Ptan$.  By plugging \eqref{eqn:aux_pop_dir_2} into \eqref{eqn:aux_pop_dir_3} and rearranging the terms, we get
\begin{align}
\label{eq:aux_ptan_ptan}
{\Ptan \Sigma^{\dagger} \Ptan}\LRP{\Id - \underbrace{\Delta\Pperp\Sigma_{P}^{\dagger}\Pperp\Delta \Ptan{\Sigma_{P}^{\dagger}\Ptan}}_{=: H}} &={\Ptan\Sigma_{P}^{\dagger} \Ptan}.
\end{align}
The matrix $H$ satisfies $\N{H} \leq 4\curvtor^2\sigmadiff^2\sslen^{2+2\alpha}/(4\minSignal^2 \sslen^2 \minSigma) = (\curvtor\sigmadiff\sslen^{\alpha})^2/( \minSignal^2 \minSigma) < 1$
under the condition $\summaryCrit < \infty$. Therefore the inverse of $\Id - H$ is explicitly given
by $\sum_{i=0}^{\infty}H^{k}$ by a von Neumann series argument. Using this and submultiplicativity
of the spectral norm we get
\begin{align*}
\N{{\Ptan \Sigma^{\dagger} \Ptan}} \leq \N{\sum_{i=0}^{\infty}(-H)^{k}}\frac{1}{\minLambda\sslen^2} \leq \frac{1}{1-\N{H}}\frac{1}{\minLambda \sslen^2}
\leq \frac{1}{1-\frac{(\curvtor\sigmadiff\sslen^{\alpha})^2}{\minSignal^2\minLambda}}\frac{1}{\minLambda\sslen^2} = \frac{\eta}{\minLambda\sslen^2}.
\end{align*}
By a symmetrie argument, we could have followed the same steps with $\Pperp$ instead, which immediately
implies the bound on $\Pperp \Sigma^{\dagger} \Pperp$. Finally, the bound on the cross term follows from \eqref{eqn:aux_pop_dir_2}
and using the bounds on $\Ptan \Sigma^{\dagger} \Ptan$, $\Delta$ and $\Pperp \Sigma_P^{\dagger}\Pperp$.
\end{proof}

We shall next bound $\Ptan(\hat \Sigma^{\dagger} - \Sigma^{\dagger})\Ptan$, $\Ptan(\hat \Sigma^{\dagger} - \Sigma^{\dagger})\Pperp$,
and $\Pperp(\hat \Sigma^{\dagger} - \Sigma^{\dagger})\Pperp$. This step is the most technical one because we need to keep close track of the dependencies
of $\hat \Sigma - \Sigma$ on directions they are evaluated in to achieve optimal
bounds with respect to both $N$ and $\sslen$. In particular, applying Lemma \ref{lem:concentration_results}
in conjunction with \eqref{eq:almost_sure_bounds_X_aX} in Lemma \ref{lem:technical_inequalities}, we have
with probability $1-3\exp(-u)$
\begin{align}
\label{eq:directional_covariance_bounds}
\N{A(\hat \Sigma - \Sigma)B} \lesssim
\begin{cases}
\sslen^2(\log(D) + u)N^{-1/2},\quad &\textrm{ if } A = B = \Ptan,\\
\sslen B_{+} (\log(D) + u)N^{-1/2},\quad &\textrm{ if } A = \Ptan, B = \Pperp,\\
B_{+}^2 (\log(D) + u)N^{-1/2},\quad &\textrm{ if } A = B = \Pperp.
\end{cases}
\end{align}
\begin{lemma}
\label{lem:GI_sample}
Assume \eqref{eq:sim_p_bound_assumption}, \ref{ass:A41}, \ref{ass:A2} and $\summaryCrit < \infty$.
Fix a confidence level $u > 0$. There exists a universal constant $C$ such that
whenever $N \geq \max\{C\summaryCrit^2 \summaryMax^2(\log(D) + u)^2, D\}$
we have with probability $1-\exp(-u)$ simultaneously
\begin{align}
\label{eq:GI_parpar_sam}
&\N{\Ptan \left(\hat\Sigma^{\dagger} -\Sigma^{\dagger}\right) \Ptan} \lesssim
\summaryCrit^2 \theta^2\frac{\log(D)+u}{\sslen^2\sqrt{N}},
\\
\label{eq:GI_nornor_sam}
&\N{\Pperp\left(\hat\Sigma^{\dagger} - \Sigma^{\dagger}\right)\Pperp} \lesssim
\summaryCrit^2 \summaryMax^2 \frac{\log(D)+u}{\sqrt{N}},
\\
\label{eq:GI_parnor_sam}
&\N{\Ptan\left(\hat\Sigma^{\dagger}-\Sigma^{\dagger}\right)\Pperp}  \lesssim
\summaryCrit^2 \summaryMax^2 \frac{\log(D)+u}{\sslen\sqrt{N}}.
\end{align}
\end{lemma}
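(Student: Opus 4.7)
The plan is to exploit the fact that $\Im(\hat\Sigma)=\Im(\Sigma)$ holds with high probability, which reduces the analysis to a Neumann-series expansion of $\hat\Sigma^{\dagger}$ around $\Sigma^{\dagger}$, and then to propagate the \emph{directional} bounds from Lemma \ref{lem:GI_pop} and from the concentration estimate \eqref{eq:directional_covariance_bounds} through a single first-order term. First, under the sample size assumption \eqref{eq:thm_3_requirement_on_N}, I would argue via a Weyl-type perturbation together with the bound on $\N{\hat\Sigma-\Sigma}$ (obtained from \eqref{eq:directional_covariance_bounds} applied directionally and summed, giving $\N{\hat\Sigma-\Sigma}\lesssim B_+^2(\log(D)+u)/\sqrt{N}$) that the smallest nonzero eigenvalue of $\Sigma$, which is at least $\min\{\minLambda \sslen^2,\minSigma\}$, dominates the perturbation, so the ranges coincide and $\rank(\hat\Sigma)=\rank(\Sigma)$. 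This replicates the first step of the proof of Lemma \ref{lem:GI_pop} for the empirical covariance.

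Second, with coinciding ranges, the identity
\begin{equation*}
\hat\Sigma^{\dagger}-\Sigma^{\dagger}=-\Sigma^{\dagger}(\hat\Sigma-\Sigma)\Sigma^{\dagger}+R,
\end{equation*}
is valid with a remainder $R=\sum_{k\geq 2}(-\Sigma^{\dagger}E)^{k}\Sigma^{\dagger}$, which under the sample size condition satisfies $\N{\Sigma^{\dagger}E}<1/2$ and is therefore of strictly higher order in $(\log(D)+u)/\sqrt{N}$ than the first-order term. The dominant contribution for $A,B\in\{\Ptan,\Pperp\}$ is then
\begin{equation*}
A(\hat\Sigma^{\dagger}-\Sigma^{\dagger})B=-\sum_{C,D\in\{\Ptan,\Pperp\}}\bigl(A\Sigma^{\dagger}C\bigr)\bigl(C(\hat\Sigma-\Sigma)D\bigr)\bigl(D\Sigma^{\dagger}B\bigr)+A\,R\,B,
\end{equation*}
where the decomposition uses $\Ptan+\Pperp=\Id$ and the projector identities $C=C^{2}$, $D=D^{2}$. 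Each of the three factors is then bounded individually: the outer factors by Lemma \ref{lem:GI_pop}, and the middle factor by \eqref{eq:directional_covariance_bounds}. A union bound over the three inequalities of \eqref{eq:directional_covariance_bounds} provides the probability statement with the same constant $\exp(-u)$.

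The final step is bookkeeping. For $A=B=\Ptan$ the four summands scale as products of (a population factor involving $\sslen^{-2}$, $\sslen^{\alpha-1}$, or $1$) $\times$ (an empirical factor involving $\sslen^{2}$, $\sslen B_{+}$, or $B_{+}^{2}$) $\times$ (a second population factor). I would verify that all four products are majorized by $\eta^{2}\theta^{2}(\log(D)+u)/(\sslen^{2}\sqrt{N})$ using the hypothesis $\sigmadiff\ge 2\minSignal \sslen^{2-\alpha}$, which couples the cross-covariance scale to $\minSignal$. The analogous case analyses for $A=B=\Pperp$ (dominant term $C=D=\Pperp$, giving $\eta^{2}\theta^{2}(\log(D)+u)/\sqrt{N}$) and $A=\Ptan, B=\Pperp$ (dominant term mixing one $\Ptan$ and one $\Pperp$, gaining a factor $\sslen^{-1}$ rather than $\sslen^{-2}$) then yield the three stated inequalities. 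The higher-order remainder $ARB$ is absorbed because, by the sample size assumption, $\N{\Sigma^{\dagger}E}^{2}\N{\Sigma^{\dagger}}$ is a factor of $\eta\theta(\log(D)+u)/\sqrt{N}$ smaller than the leading terms.

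The main obstacle is Step 3 (and nothing else really): verifying that every one of the twelve products arising in the four-term decomposition, across the three cases, respects the scaling claimed in \eqref{eq:GI_parpar_sam}-\eqref{eq:GI_parnor_sam}. In particular, the off-diagonal factor $\N{\Ptan\Sigma^{\dagger}\Pperp}\lesssim \eta\curvtor\sigmadiff\sslen^{\alpha-1}/(\minLambda\minSigma)$ must be shown to combine with $\N{\Pperp E \Pperp}\lesssim B_{+}^{2}(\log(D)+u)/\sqrt{N}$ without exceeding the diagonal scale, and this is precisely where the hypothesis $\eta<3$ (equivalently, $\curvtor\sigmadiff\sslen^{\alpha}\ll\minSignal\sqrt{\minSigma}$) becomes critical. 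Once this bookkeeping is complete, the three bounds follow simultaneously on a single event of probability at least $1-\exp(-u)$.
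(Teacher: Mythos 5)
Your proposal has a genuine gap that makes the route fail. You expand $\hat\Sigma^\dagger$ as a global Neumann series $\sum_{k\ge 0}(-\Sigma^\dagger E)^k\Sigma^\dagger$ and then claim $\N{\Sigma^\dagger E}<1/2$ under the stated sample size condition. This is not true: $\N{\Sigma^\dagger}\gtrsim(\minLambda\sslen^2)^{-1}$ while $\N{E}\lesssim B_+^2(\log D+u)/\sqrt{N}$, so $\N{\Sigma^\dagger E}\gtrsim B_+^2(\log D+u)/(\minLambda\sslen^2\sqrt{N})$. Making this $<1/2$ would require $N\gtrsim (\log D+u)^2/\sslen^4$, but the lemma only assumes $N\gtrsim\summaryCrit^2\summaryMax^2(\log D+u)^2$, which has no $\sslen^{-4}$ factor. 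In fact the entire point of the lemma is to produce bounds that hold under an $\sslen$-independent sample size, and the naive global Neumann expansion destroys that. The same $\sslen^{-2}$ cancellation problem sinks your Weyl-type argument for $\Im(\hat\Sigma)=\Im(\Sigma)$: the smallest nonzero eigenvalue of $\Sigma$ is of order $\minLambda\sslen^2$, and dominating $\N{E}$ by it again forces $N\gtrsim\sslen^{-4}$.

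The paper circumvents both obstacles. For range equality it does not use a spectral perturbation argument at all: since $N\ge D$ and $X|Y\in\CR$ is absolutely continuous on $\Im(\Sigma)$, the $N$ samples a.s.\ span $\Im(\Sigma)$, giving $\Im(\hat\Sigma)=\Im(\Sigma)$ directly. For the inverse perturbation it never forms a Neumann series in $\Sigma^\dagger\Delta$. Instead it starts from the Wedin identity $\hat\Sigma^\dagger-\Sigma^\dagger=-\Sigma^\dagger\Delta\Sigma^\dagger-\Sigma^\dagger\Delta(\hat\Sigma^\dagger-\Sigma^\dagger)$, multiplies by $\Ptan,\Pperp$ on both sides to obtain a \emph{closed} $2\times 2$ block linear system in the unknowns $\Ptan(\hat\Sigma^\dagger-\Sigma^\dagger)\Ptan$ and $\Pperp(\hat\Sigma^\dagger-\Sigma^\dagger)\Ptan$, and inverts this system via a Schur complement. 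The Neumann series used there are for the \emph{directional} quantities $\Ptan\Sigma^\dagger\Delta\Ptan$ and for the Schur complement $H$, and these satisfy $\N{\Ptan\Sigma^\dagger\Delta\Ptan}\lesssim\summaryCrit\summaryMax(\log D+u)/\sqrt N$ with no residual $\sslen$ dependence, precisely because the $\sslen^{-2}$ from $\Ptan\Sigma^\dagger\Ptan$ is cancelled by the $\sslen^2$ from $\Ptan\Delta\Ptan$. Your first-order bookkeeping (the three-factor products bounded by Lemma \ref{lem:GI_pop} and \eqref{eq:directional_covariance_bounds}) captures the same cancellation for the leading term, but without the block-system structure you have no handle on the remainder, and the infinite series you invoke to bound it simply does not converge under the given hypotheses.
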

\begin{proof}
We first note that we have $\Im(\hat \Sigma) = \Im(\Sigma)$ since $N \geq D$ and we assume that
$X| Y \in \CR$ is absolutely continuous with respect to $\Im(\Sigma)$, see Section \ref{sec:model}.
Now denote the shorthand $\Delta := \hat \Sigma - \Sigma$. From \cite{wedin1973perturbation} we obtain the identity
\[
\hat \Sigma^{\dagger} - \Sigma^{\dagger} =
(\hat \Sigma^{\dagger} - \Sigma^{\dagger})^\top = - \Sigma^{\dagger}\Delta \hat \Sigma^{\dagger} = - \Sigma^{\dagger}\Delta \Sigma^{\dagger} - \Sigma^{\dagger}\Delta (\hat \Sigma^{\dagger}-\Sigma^{\dagger}),
\]
and by using $P + Q = \Id$ and rearranging the terms, this implies
\begin{align}
\label{eq:MP_Inverse_initial_1}
\Ptan(\hat \Sigma^{\dagger} - \Sigma^{\dagger})\Ptan &= - \Ptan\Sigma^{\dagger}\Delta \Sigma^{\dagger}\Ptan - \Ptan\Sigma^{\dagger}\Delta \Ptan(\hat \Sigma^{\dagger}-\Sigma^{\dagger})\Ptan - \Ptan\Sigma^{\dagger}\Delta \Pperp(\hat \Sigma^{\dagger}-\Sigma^{\dagger})\Ptan,\\
\label{eq:MP_Inverse_initial_2}
\Pperp(\hat \Sigma^{\dagger} - \Sigma^{\dagger})\Ptan  &= - \Pperp\Sigma^{\dagger}\Delta \Sigma^{\dagger}\Ptan - \Pperp\Sigma^{\dagger}\Delta \Ptan(\hat \Sigma^{\dagger}-\Sigma^{\dagger})\Ptan - \Pperp\Sigma^{\dagger}\Delta \Pperp(\hat \Sigma^{\dagger}-\Sigma^{\dagger})\Ptan,\\
\label{eq:MP_Inverse_initial_3}
\Pperp(\hat \Sigma^{\dagger} - \Sigma^{\dagger})\Pperp &= - \Pperp\Sigma^{\dagger}\Delta \Sigma^{\dagger}\Pperp - \Pperp\Sigma^{\dagger}\Delta \Pperp(\hat \Sigma^{\dagger}-\Sigma^{\dagger})\Pperp - \Pperp\Sigma^{\dagger}\Delta \Ptan(\hat \Sigma^{\dagger}-\Sigma^{\dagger})\Pperp.
\end{align}
Considering only the first two equations, they contain two unknowns $\Ptan(\hat \Sigma^{\dagger}-\Sigma^{\dagger})\Ptan$ and $\Ptan(\hat \Sigma^{\dagger}-\Sigma^{\dagger})\Pperp$.
Hence we can solve for these unknowns by solving a linear system $SU = R$ with
\begin{align*}
U &= \left[
\begin{array}{c}
\Ptan(\hat \Sigma^{\dagger} - \Sigma^{\dagger})\Ptan\\
\hline \Pperp(\hat \Sigma^{\dagger} - \Sigma^{\dagger})\Ptan
\end{array},
\right] \in \bbR^{2D\times D},\\
R &= \left[
\begin{array}{c}
-\Ptan\Sigma^{\dagger}\Delta \Sigma^{\dagger}\Ptan \\
\hline
-\Pperp\Sigma^{\dagger}\Delta \Sigma^{\dagger}\Ptan
\end{array}
\right] \in \bbR^{2D\times D},\\
S &= \left[
\begin{array}{c|c}
\Id_D + \Ptan\Sigma^{\dagger}\Delta \Ptan & \Ptan \Sigma^{\dagger}\Delta \Pperp\\
\hline
\Pperp\Sigma^{\dagger}\Delta \Ptan & \Id + \Pperp\Sigma^{\dagger}\Delta \Pperp
\end{array}
\right] := \left[
\begin{array}{c|c}
S_{11} & S_{12}\\
\hline
S_{21} & S_{22}
\end{array}
\right] \in \bbR^{2D \times 2D}.
\end{align*}
It is well-known that, provided $S_{11}$ and $S_{22} - S_{21}S_{11}^{-1}S_{12}$ are invertible, the inverse of $S$
is precisely
\begin{align*}
S^{-1} = \left[
\begin{array}{c|c}
S_{11}^{-1} + S_{11}^{-1}S_{12}\left(S_{22} - S_{21}S_{11}^{-1}S_{12}\right)^{-1}S_{21}S_{11}^{-1} & -S_{11}^{-1}S_{12}\left(S_{22} - S_{21}S_{11}^{-1}S_{12}\right)^{-1} \\
\hline
-\left(S_{22} - S_{21}S_{11}^{-1}S_{12}\right)^{-1}S_{21}S_{11}^{-1} & \left(S_{22} - S_{21}S_{11}^{-1}S_{12}\right)^{-1}
\end{array}
\right].
\end{align*}
This allows to establish an identity for $\Pperp(\hat \Sigma^{\dagger}-\Sigma^{\dagger})\Ptan$ by known terms
after we have computed related entries of the inverse $S^{-1}$. This will be our first goal
in the following.

\noindent
Whenever $\N{\Ptan\Sigma^{\dagger}\Delta \Ptan} < 1$, we have $S_{11}^{-1} = \sum_{k=0}^{\infty}(-\Ptan\Sigma^{\dagger}\Delta \Ptan)^k$
using a von Neumann series argument.  Following the same argument, the matrix
$S_{22} - S_{21}S_{11}^{-1}S_{12} = \Id + \Pperp\Sigma^{\dagger}\Delta \Pperp - S_{21}S_{11}^{-1}S_{12}$
is invertible whenever, for $H := \Pperp\Sigma^{\dagger}\Delta \Pperp - S_{21}S_{11}^{-1}S_{12}$, we have $\N{H} < 1$.
In that case
\begin{align*}
&(S_{22} - S_{21}S_{11}^{-1}S_{12})^{-1} = \sum_{k=0}^{\infty}(-H)^k,\quad \textrm{and thus}\\
&-(S_{22} - S_{21}S_{11}^{-1}S_{12})^{-1}S_{21}S_{11}^{-1} = - \sum_{k=0}^{\infty}(-H)^k Q \Sigma^{\dagger}\Delta \Ptan \sum_{k=0}^{\infty}(-\Ptan\Sigma^{\dagger}\Delta \Ptan)^k,\\
&\textrm{implying } Q(\hat \Sigma^{\dagger} - \Sigma^{\dagger})\Ptan = \sum_{k=0}^{\infty}(-H)^k \Pperp \Sigma^{\dagger}\Delta \Ptan \sum_{k=0}^{\infty}(-\Ptan\Sigma^{\dagger}\Delta \Ptan)^k \Ptan\Sigma^{\dagger}\Delta \Sigma^{\dagger}\Ptan - \sum_{k=0}^{\infty}(-H)^k \Pperp\Sigma^{\dagger}\Delta \Sigma^{\dagger}\Ptan.
\end{align*}
Taking the supremum norm and using norm submultiplicativity it follows that
\begin{align}
\label{eq:aux_cross_term_1}
\N{Q(\hat \Sigma^{\dagger} - \Sigma^{\dagger})P} \leq \frac{ \N{Q \Sigma^{\dagger}\Delta P}}{1-\N{H}} \frac{\N{P\Sigma^{\dagger}\Delta \Sigma^{\dagger}P}}{1-\N{P\Sigma^{\dagger}\Delta P}}  + \frac{\N{Q\Sigma^{\dagger}\Delta \Sigma^{\dagger}P}}{1-\N{H}}.
\end{align}
Moreover, we can simplify leading factors in \eqref{eq:aux_cross_term_1} by estimating $\N{H}$. Specifically we find
\begin{align*}
\N{\sum_{k=0}^{\infty}(-P\Sigma^{\dagger}\Delta P)^k} &\leq \sum_{k=0}^{\infty}\N{P\Sigma^{\dagger}\Delta P}^k \leq \frac{1}{1-\N{P\Sigma^{\dagger}\Delta P}}\\
\textrm{which implies}\quad \N{H} &\leq \N{Q\Sigma^{\dagger}\Delta Q} + \frac{\N{Q\Sigma^{\dagger}\Delta P}\N{P\Sigma^{\dagger}\Delta Q}}{1 - \N{P \Sigma^{\dagger}\Delta P}},
\end{align*}
and therefore after algebraic manipulations we get
\begin{align}
\label{eq:aux_cross_term_2}
\frac{1}{1-\N{H}} \leq \frac{1}{(1-\N{\Pperp\Sigma^{\dagger}\Delta \Pperp})(1-\N{\Ptan \Sigma^{\dagger}\Delta \Ptan }) - \N{\Pperp\Sigma^{\dagger} \Delta \Ptan }\N{\Ptan \Sigma^{\dagger} \Delta \Pperp}}.
\end{align}
Having \eqref{eq:aux_cross_term_1} and \eqref{eq:aux_cross_term_2} established,
we now need to bound terms like $\Vert A\Sigma^{\dagger}\Delta B\Vert_2$
and $\Vert A\Sigma^{\dagger}\Delta \Sigma^{\dagger} B\Vert$ where $A, B \in \{P,Q\}$.
This ensures on one hand the invertibility of
$P\Sigma^{\dagger}\Delta P$ and $H$, and on the other hand bounds remaining terms in
\eqref{eq:aux_cross_term_1}. All bounds are achieved similarly by decomposing them
further and using the triangle inequality, e.g. to get
\begin{align*}
\N{ \Ptan \Sigma^{\dagger}\Delta \Ptan} \leq \N{\Ptan \Sigma^{\dagger}\Ptan}\N{\Ptan \Delta \Ptan} + \N{\Ptan \Sigma^{\dagger}\Pperp}\N{\Pperp \Delta \Ptan}.
\end{align*}
Then application of Lemma \ref{lem:GI_pop} and \eqref{eq:directional_covariance_bounds}
yields a concentration bound. For simplicity, we list the resulting bounds in Table \ref{tab:bounds_for_ugly_result}
below. They hold with probability at least $1-3\exp(-u)$.
\begin{table}[H]
\begin{center}
\scriptsize
\begin{tabular}{c|c|c}
Term & Bound multiplied with $C(\log(D) + u)N^{-1/2}$ & Shorthand notation\\
\toprule
$\N{ \Ptan \Sigma^{\dagger}\Delta \Ptan}$ & $\summaryCrit\left(\frac{1}{\minLambda} + \frac{B_+}{\sqrt{\minLambda\minSigma}} \right) \leq 2 \summaryCrit \summaryMax$ & $T_1$\\
$\N{ \Pperp \Sigma^{\dagger}\Delta \Pperp}$ & $\summaryCrit \left(\frac{B_{+}^2}{\minSigma} + \frac{B_+}{\sqrt{\minLambda\minSigma}}  \right)\leq 2 \summaryCrit \summaryMax$ & $T_2$\\
$\N{ \Pperp \Sigma^{\dagger}\Delta \Ptan}$ & $\summaryCrit \sslen \left(\frac{B_{+}}{\minSigma} + \frac{1}{\sqrt{\minLambda\minSigma}} \right)\leq 2 \summaryCrit \summaryMax \sslen $ & $T_3$\\
$\N{ \Ptan \Sigma^{\dagger}\Delta \Pperp}$ & $\summaryCrit \sslen^{-1} \left(\frac{B_{+} }{\minLambda} + \frac{B_{+}^2}{\sqrt{\minLambda\minSigma}}\right)\leq 2 \summaryCrit \summaryMax \sslen^{-1}$ & $T_4$\\
$\N{ \Ptan \Sigma^{\dagger}\Delta \Sigma^{\dagger} \Ptan}$ &
$\left(\frac{\summaryCrit}{\sslen}\right)^2 \left(\frac{1}{\minLambda} + \frac{B_{+}}{\sqrt{\minSigma \minLambda}} \right)^2 \leq 4 \left(\frac{\summaryCrit}{\sslen}\right)^2 \summaryMax^2$ & $T_5$\\
$\N{ \Pperp \Sigma^{\dagger}\Delta \Sigma^{\dagger} \Pperp}$ & $ \summaryCrit^2 \left(\frac{B_{+}}{\minSigma} + \frac{1}{\sqrt{\minSigma\minLambda}}\right)^2 \leq 4 \summaryCrit^2 \summaryMax^2$  & $T_6$\\
$\N{ \Ptan \Sigma^{\dagger}\Delta \Sigma^{\dagger} \Pperp}$ & $ \frac{\summaryCrit^2}{\sslen} \left(\frac{1}{\minLambda} +
\frac{B_{+}}{\sqrt{\minSigma \minLambda}}\right)\left(\frac{B_{+}}{\minSigma} + \frac{1}{\sqrt{\minSigma\minLambda}}\right)
\leq 4 \frac{\summaryCrit^2 \summaryMax^2}{\sslen}$ & $T_7 \leq \sqrt{T_5 T_6}$
\end{tabular}
\end{center}
\caption{Bounds for the perturbation terms based  Lemma \ref{lem:GI_pop} and
and \eqref{eq:directional_covariance_bounds}. $C$
is a universal constant.
Here we used that $\eta < \infty$ implies $\curvtor \sigmadiff \sslen^{\alpha}(\minLambda \minSigma)^{-1/2}\leq 1$ to simplify the bounds.
}
\label{tab:bounds_for_ugly_result}
\end{table}

\noindent
Now, let us first ensure the invertibilities of $P\Sigma^{\dagger}\Delta P$ and $H$
that was needed to derive \eqref{eq:aux_cross_term_1}. Since $T_3 T_4 \leq T_1 T_2$ Eqn.
\eqref{eq:aux_cross_term_2} becomes $(1-\N{H})^{-1} \leq ((1-T_1)(1-T_2) - T_1 T_2)^{-1}$
which is less than $1$ e.g. if $\max\{T_1, T_2\} < 1/2$. Thus it suffices to require
\begin{align*}
\max\{T_1, T_2\} \leq C'  \summaryCrit \summaryMax (\log(D) + u)N^{-1/2} < 1.
\end{align*}
This is ensured by the assumption $N \geq C \summaryCrit^2 \summaryMax^2 (\log(D) + u)^2$
and therefore $(1-\N{H})^{-1} \lesssim 1$, $\N{P\Sigma^{\dagger}\Delta P} \lesssim 1$. Combining this with \eqref{eq:aux_cross_term_1}
we then obtain
\begin{align*}
\N{Q(\hat \Sigma^{\dagger} - \Sigma^{\dagger})P} &\lesssim \N{Q \Sigma^{\dagger}\Delta P} \N{P\Sigma^{\dagger}\Delta \Sigma^{\dagger}P} + \N{Q\Sigma^{\dagger}\Delta \Sigma^{\dagger}P}\\
  &\lesssim  \frac{ \summaryMax^3 \summaryCrit^3 (\log(D) + u)^2}{\sslen N} + \frac{\summaryMax^2 \summaryCrit^2(\log(D) + u)}{\sslen \sqrt{  N}}
  \lesssim  \frac{\summaryMax^2 \summaryCrit^2(\log(D) + u)}{\sslen \sqrt{N}},
\end{align*}
where we used $N \geq C \summaryCrit^2 \summaryMax^2 (\log(D) + u)^2$ again to simplify higher order term. This proves
\eqref{eq:GI_parnor_sam}.

\noindent
The remaining two bounds are easier since we can use \eqref{eq:GI_parnor_sam}.
For \eqref{eq:GI_parpar_sam} we recall \eqref{eq:MP_Inverse_initial_1}
and $\N{\Ptan\Sigma^{\dagger}\Delta \Ptan} < 1$ (whenever $N \geq C \summaryCrit^2 \summaryMax^2 (\log(D) + u)^2$) to get
\begin{align*}
\Ptan(\hat \Sigma^{\dagger} - \Sigma^{\dagger})\Ptan &=
\left(\Id + \Ptan\Sigma^{\dagger}\Delta \Ptan\right)^{-1}\left(- \Ptan\Sigma^{\dagger}\Delta \Sigma^{\dagger}\Ptan  - \Ptan\Sigma^{\dagger}\Delta \Pperp(\hat \Sigma^{\dagger}-\Sigma^{\dagger})\Ptan\right).
\end{align*}
Then, expressing the inverse by a von Neumann series and using $(1-\N{\Ptan \Sigma^{\dagger}\Delta \Ptan})^{-1} \lesssim 1$
we get
\begin{align*}
\N{\Ptan(\hat \Sigma^{\dagger} - \Sigma^{\dagger})\Ptan} &\leq \frac{\N{\Ptan\Sigma^{\dagger}\Delta \Sigma^{\dagger}\Ptan}}{1 - \N{\Ptan\Sigma^{\dagger}\Delta \Ptan}}
 + \frac{\N{\Ptan\Sigma^{\dagger}\Delta \Pperp(\hat \Sigma^{\dagger}-\Sigma^{\dagger})\Ptan}}{1 - \N{\Ptan\Sigma^{\dagger}\Delta \Ptan}} \\
 &\lesssim \N{\Ptan\Sigma^{\dagger}\Delta \Sigma^{\dagger}\Ptan}
  + \N{\Ptan\Sigma^{\dagger}\Delta \Pperp}\N{\Pperp(\hat \Sigma^{\dagger}-\Sigma^{\dagger})\Ptan}\\
&\lesssim \left(\frac{\summaryCrit \summaryMax}{\sslen}\right)^2\frac{(D + u)}{\sqrt{N}}
 + \frac{\summaryCrit \summaryMax(\log(D)+u)}{\sslen \sqrt{N}}\frac{\summaryMax^2 \summaryCrit^2(\log(D) + u)}{\sslen \sqrt{N}}
 \leq \summaryCrit^2 \summaryMax^2 \frac{(D + u)}{\sslen^2\sqrt{N}},
\end{align*}
where we used again $N \geq C \summaryCrit^2 \summaryMax^2 (\log(D) + u)^2$ to simplify the higher order term.
\eqref{eq:GI_nornor_sam} follows similarly by starting from \eqref{eq:MP_Inverse_initial_3}.
\end{proof}

\noindent
It remains to analyze the cross-covariance term $r = \Covv{X,Y | Y \in \CR}$,
and bounding its concentration when estimated from a finite data set.

\begin{lemma}
\label{lem:right_hand_side_new}
Assume \ref{ass:A5}, \ref{ass:A1}. For $r = \Covv{X,Y| \CR}$
we have $\N{P r} = \minSignal \sslen \ssler$ and $\N{\Pperp r}\leq 1/2 \kappa \sslen^2 \ssler$.
Furthermore, let now $\{(X_i,Y_i) : i \in [N]\}$ denote $N$ iid. copies of $(X,Y)$, and
denote $\hat r = N^{-1}\sum_{i=1}^{N}(X_i - \smean{X_i})(Y_i - \smean{Y_i})$.
Then we have for $u > 1$ concentration results
\begin{align*}
\bbP\left(\N{\Ptan(\hat r - r)} \lesssim \frac{u \sslen \ssler}{\sqrt{N}}\right) \geq 1 - \exp(-u),\textrm{ and }\quad \bbP\left(\N{\hat r - r} \lesssim \frac{u B_{+} \ssler}{\sqrt{N}}\right) \geq 1 - \exp(-u).
\end{align*}
\end{lemma}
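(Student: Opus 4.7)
I would approach the proof in three steps: a deterministic computation of the two norm bounds, followed by a finite-sample decomposition into a true-mean empirical average plus a lower-order correction, and then application of scalar and vector Hoeffding-type inequalities. Adopting the local shorthand $P,Q,a,\sslen,\ssler,\minSignal$ from the start of this appendix section, the first norm bound is immediate: since $P=aa^\top$, one has $Pr=a\,\Covv{a^\top X,Y|\CR}$, so $\N{Pr}=|\Covv{a^\top X,Y|\CR}|=\minSignal \sslen \ssler$ directly by the definition of $\minSignal$. For $\N{Qr}$, Assumptions \ref{ass:A5} and \ref{ass:A1} yield $\bbE[W|Y]=0$ and thus, by the law of total covariance (as already used in \eqref{eq:cov_w_y_zero}), $\Covv{W,Y|\CR}=0$. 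Consequently $Qr=Q\Covv{V,Y|\CR}=\Covv{QV,Y|\CR}$, and Cauchy--Schwarz combined with \eqref{eq:variance_bound_PV} and Popoviciu's inequality $\Var{Y|\CR}\leq \ssler^2/4$ gives
\[
\N{Qr}\leq \sqrt{\N{\Covv{QV|\CR}}\,\Var{Y|\CR}}\leq \sqrt{\tfrac{1}{2}\curvtor^{2}\sslen^{4}\cdot \ssler^{2}/4}\leq \tfrac{1}{2}\curvtor\sslen^{2}\ssler.
\]

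For the finite-sample part I would decompose
\[
\hat r-r=(\tilde r-r)-(\smean{X}-\bbE X)(\smean{Y}-\bbE Y),\qquad \tilde r:=\frac{1}{N}\sum_{i=1}^{N}(X_{i}-\bbE X)(Y_{i}-\bbE Y),
\]
with $\bbE \tilde r=r$ (a direct expansion shows $\hat r=\tilde r-(\smean{X}-\bbE X)(\smean{Y}-\bbE Y)$). For the tangent projection, each summand $a^\top(X_{i}-\bbE X)(Y_{i}-\bbE Y)-\Covv{a^\top X,Y|\CR}$ is a mean-zero scalar with absolute value bounded by $O(\sslen\ssler)$ via \eqref{eq:almost_sure_bounds_X_aX} and $|Y_{i}-\bbE[Y|\CR]|\leq \ssler$ (since $Y_i,\bbE[Y|\CR]\in\CR$). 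A scalar Hoeffding inequality then yields $|a^\top(\tilde r-r)|\lesssim u\sslen\ssler/\sqrt{N}$ with probability at least $1-\exp(-u)$ for $u>1$. The correction contributes at most $u^{2}\sslen\ssler/N$ by Lemma \ref{lem:concentration_results} applied separately to $a^\top \smean{X}$ and $\smean{Y}$, and is absorbed into the main term provided $u\leq\sqrt{N}$.

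For the full-vector bound, the same decomposition applies, but now each summand of $\tilde r-r$ is a mean-zero \emph{vector} with Euclidean norm at most $\N{X_{i}-\bbE X}\cdot|Y_{i}-\bbE Y|\leq B_{+}\ssler$ by \eqref{eq:almost_sure_bounds_X_aX}. The bound $\N{\tilde r-r}\lesssim uB_{+}\ssler/\sqrt{N}$ then follows from the bounded-differences (McDiarmid) inequality, exactly as in the proof of the mean-estimate portion of Lemma \ref{lem:concentration_results}; the correction is again of order $u^{2}B_{+}\ssler/N$ and hence lower order. The one delicate point --- and the reason I would use McDiarmid rather than a matrix-Bernstein estimate --- is to avoid a spurious $\log(D)$ factor in the vector bound: since $\hat r-r$ is a vector, not a matrix, McDiarmid applied directly to the scalar Lipschitz function $\N{\tilde r-r}$ of the iid data produces the claimed dimension-free rate.
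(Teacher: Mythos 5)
Your proposal is correct and follows essentially the same route as the paper's proof: the identity $\Vert Pr\Vert=\minSignal\sslen\ssler$ is read off from the definition of $\minSignal$, $Qr$ is handled via $\Covv{W,Y|\CR}=0$ plus Cauchy--Schwarz against \eqref{eq:variance_bound_PV}, and the concentration comes from the decomposition $\hat r-r=(\tilde r-r)-(\smean{X}-\bbE X)(\smean{Y}-\bbE Y)$ with almost-sure bounds from \eqref{eq:almost_sure_bounds_X_aX} feeding into a McDiarmid/Hoeffding argument (the paper packages both cases through $Z_i:=(X_i-\bbE X)(Y_i-\bbE Y)-\Covv{X,Y}$ and its first bound in Lemma \ref{lem:concentration_results}). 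Your remark about using the bounded-differences inequality to keep the vector bound dimension-free, rather than matrix Bernstein, is exactly the point the paper is exploiting via its mean-concentration estimate.
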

\begin{proof}
$\N{P r} = \minSignal \sslen \ssler$ is precisely the definition of $\minSignal$
in Theorem \ref{thm:midpoint_estimation_restated}. For $Qr$ we first recall $\Covv{W,Y|\CR} = 0$
as in \eqref{eq:cov_w_y_zero}.
Therefore, we can write $\Pperp r = \Pperp\Covv{X,Y|\CR} = \Pperp \Covv{V,Y|\CR}$ which satisfies
by \eqref{eq:variance_bound_PV} in Lemma \ref{lem:technical_inequalities}
\begin{align*}
\N{\Pperp \Covv{V,Y|\CR}} &\leq \sqrt{\N{\Covv{\Pperp V|\CR}} \N{\Covv{Y|\CR}}} \leq 1/2 \curvtor \sslen^2 \ssler.
\end{align*}
For the concentration results, we denote $Z_i := (X_i - \bbE X)(Y_i - \bbE Y) - \Covv{X, Y}$, and let $A \in \{\Ptan, \Id\}$.
We can decompose the error as
\[
A(r - \hat r) = \smean{AZ_i} + (\smean{AX_i} - \bbE AX)(\bbE Y - \smean{Y_i}),
\]
and notice that, by Lemma \ref{lem:concentration_results}, the second term is always of higher order.
For the first term, we have $\bbE A Z_i = 0$, and
\begin{align*}
\N{A Z_i} &\leq \N{A(X_i- \bbE X)}\N{Y_i - \bbE Y} + \sqrt{\bbE\N{Y-\bbE Y}^2} \sqrt{\bbE\N{A(X - \bbE X)}^2} \leq 2 C_A \ssler,
\end{align*}
where $\N{A(X - \bbE X)} \leq C_A$  almost surely. Using \eqref{eq:almost_sure_bounds_X_aX} in
Lemma \ref{lem:technical_inequalities},
we can choose $C_A = 2 \sslen$ if $A = \Ptan$, and $C_A = B_{+}$ if $A = \Id$.
The results follows from \eqref{eq:directional_covariance} in Lemma \ref{lem:concentration_results}.
\end{proof}

\begin{proof}[Proof of Theorem \ref{thm:midpoint_estimation_restated}]
The proof is divided into three steps. First we use previously established Lemmata
\ref{lem:GI_pop}, \ref{lem:GI_sample}, and \ref{lem:right_hand_side_new} to provide
concentration bounds for $\Vert P(\hat b - b)\Vert$ and $\Vert Q(\hat b - b)\Vert$, where we recall $b = \Sigma^{\dagger}r$
and $\hat b = \hat \Sigma^{\dagger}\hat r$. Then we establish that the bound \eqref{eq:aux_thm_3_initial} is indeed true
under the conditions of the Theorem. Finally, we use the concentration bounds on
$\Vert P(\hat b - b)\Vert$ and $\Vert Q(\hat b - b)\Vert$ together with a bound
on $\Vert Qb\Vert$ to conclude the result.

\noindent
1. Let us begin with $\Vert \Ptan(\hat b - b)\Vert$. We first decompose the error into
\begin{align}
\label{eq:nsim_aux_final_decomposition_1}
\Ptan(\hat b - b) =
\Ptan(\hat \Sigma^{\dagger} \hat r -  \Sigma^{\dagger} r) =
 \Ptan(\hat \Sigma^{\dagger}  - \Sigma^{\dagger})r + \Ptan(\hat \Sigma^{\dagger}  - \Sigma^{\dagger})(\hat r - r) + \Ptan\Sigma^{\dagger} (\hat r -  r).
\end{align}
Now we apply Lemma \ref{lem:GI_pop}, \ref{lem:GI_sample}, and \ref{lem:right_hand_side_new} to bound these terms.
The second term has higher order and is thus neglected. For the first term we get
with probability $1-2\exp(-u)$
\begin{align*}
&\N{\Ptan(\hat \Sigma^{\dagger}  - \Sigma^{\dagger})r} \leq \N{\Ptan(\hat \Sigma^{\dagger}  - \Sigma^{\dagger})\Ptan}\N{\Ptan r} +
\N{\Ptan(\hat \Sigma^{\dagger}  - \Sigma^{\dagger})\Pperp}\N{\Pperp r}\\
&\lesssim  \frac{\summaryMax^2(\log(D) + u)}{\sqrt{N}} \frac{\minSignal\sslen \ssler}{\sslen^2}
+ \frac{\summaryMax^2(\log(D) + u)}{\sqrt{N}}\curvtor \frac{\sslen^2\ssler}{\sslen}
\lesssim L_f \summaryMax^2 \minSignal\left(1 + \frac{\kappa \sslen^{2}}{\minSigmaY}\right)\frac{\log(D) + u}{\sqrt{N}}
\end{align*}
where we used  $\ssler/\sslen \lesssim L_f$ since $\sslen \geq L_f^{-1}(\ssler - 2\sigma_{\varepsilon})$ by Lemma \ref{lem:bound_curve_segment}, and
$\ssler > 4\sigma_{\varepsilon}$.
For the third term in \eqref{eq:nsim_aux_final_decomposition_1} we have with probability $1-2\exp(-u)$
\begin{align*}
\N{P\Sigma^{\dagger} (\hat r -  r)} &\leq \N{P\Sigma^{\dagger}P}\N{P(\hat r -  r)} + \N{P\Sigma^{\dagger}Q}\N{Q(\hat r -  r)}\\
&\lesssim \frac{1}{\sigma_{\parallel}\sslen^2}\frac{u\sslen \ssler}{\sqrt{N}} + \frac{\curvtor \sigmadiff}{\minLambda\minSigma \sslen^{1-\alpha}} \frac{uB_{+}\ssler}{\sqrt{N}}
\lesssim L_f  \summaryMax \frac{u}{\sqrt{N}},
\end{align*}
where we used that $\eta < \infty$ implies $\curvtor\sigmadiff\sslen^{\alpha}/(\minLambda\minSigma) \leq 1$.
Since $\summaryMax^2 \minSignal \geq \summaryMax \max\{1,\minSigmaY^{-2}\}\minSignal \geq \summaryMax$ the bound for the third term
is dominated by the bound on $\Vert \Ptan(\hat \Sigma^{\dagger}  - \Sigma^{\dagger})r\Vert$,
and thus we get with probability $1-4\exp(-u)$
\begin{equation}
\label{eq:bound_final_parallel}
\N{\Ptan(\hat b - b)}\lesssim L_f \summaryMax^2 \minSignal\left(1 + \frac{\kappa \sslen^{2}}{\minSigmaY}\right)\frac{\log(D) + u}{\sqrt{N}}.
\end{equation}

\noindent
The same strategy is used for $\Pperp (\hat b - b)$. First we decompose into three terms
\begin{align*}
\Pperp(\hat b - b)  =\Pperp(\hat \Sigma^{\dagger}  - \Sigma^{\dagger})r + \Pperp(\hat \Sigma^{\dagger}  - \Sigma^{\dagger})(\hat r - r) + \Pperp\Sigma^{\dagger} (\hat r -  r),
\end{align*}
and notice that the second term is of higher order. The first term is bounded by
\begin{align*}
&\N{\Pperp(\hat \Sigma^{\dagger}  - \Sigma^{\dagger})r} \leq \N{\Pperp(\hat \Sigma^{\dagger}  - \Sigma^{\dagger})\Pperp}\N{\Pperp r}
+ \N{\Pperp(\hat \Sigma^{\dagger}  - \Sigma^{\dagger})\Ptan}\N{\Ptan r}\\
&\lesssim \frac{\summaryMax^2(\log(D) + u)}{\sqrt{N}} \curvtor \sslen^2 \ssler
+ \frac{ \summaryMax^2(\log(D) + u)}{\sqrt{N}} \frac{\minSignal\sslen\ssler}{\sslen}
\leq  \summaryMax^2 \minSignal\left(1 + \frac{\kappa \sslen^{2}}{\minSigmaY} \right)\frac{\log(D) + u}{\sqrt{N}}\ssler,
\end{align*}
and for the third summand we get
\begin{align*}
\N{\Pperp\Sigma^{\dagger} (\hat r -  r)} &\leq \N{\Pperp\Sigma^{\dagger}\Pperp}\N{\Pperp(\hat r -  r)} + \N{\Pperp\Sigma^{\dagger}\Ptan}\N{\Ptan(\hat r -  r)}\\
&\lesssim \frac{1}{\minSigma} \frac{uB_{+}\ssler}{\sqrt{N}} + \frac{\curvtor \sigmadiff}{\minSigma\minLambda \sslen^{1-\alpha}}\frac{u\sslen\ssler}{\sqrt{N}}
\lesssim  \summaryMax \frac{u }{\sqrt{N}}\ssler.
\end{align*}
As before the first term dominates and thus we have with
probability $1-4\exp(-u)$
\begin{equation}
\label{eq:bound_final_perp}
\N{\Pperp(\hat b - b)} \lesssim  \summaryMax^2 \minSignal\left(1 +\frac{\kappa \sslen^{2}}{\minSigmaY}\right)\frac{\log(D) + u}{\sqrt{N}}\ssler.
\end{equation}

\noindent
2. Next we prove the error decomposition \eqref{eq:aux_thm_3_initial}. This first requires to ensure $a^\top \hat b > 0$ (Step 2.1).

\noindent
2.1 We first note that the definition $b = \Sigma^{\dagger} r$ implies $r = \Sigma b$. Rewriting $a^\top r$ we get
\begin{align}
\label{eq:bound_ab_1}
a^\top r =a^\top \Sigma b = a^\top \Sigma a a^\top b + a^\top \Sigma \Pperp b\quad  \textrm{and thus}\quad a^\top b
\geq \frac{a^\top r - \N{P\Sigma \Pperp}\N{\Pperp b}}{a^\top \Sigma a} .
\end{align}
Furthermore using Lemma \ref{lem:GI_pop}, \ref{lem:right_hand_side_new} and $\sigmadiff \geq 2\minSignal \sslen^{2-\alpha}$, $\minLambda = 4\minSignal^2$ we can bound $\Vert \Pperp b \Vert$ by
\begin{equation}
\begin{aligned}
\label{eq:thm_3_bound_Qb}
\Vert \Pperp b\Vert &\leq \Vert \Pperp \Sigma^{\dagger} \Pperp \Vert \Vert \Pperp r\Vert + \Vert \Pperp \Sigma^{\dagger} \Ptan \Vert \Vert \Ptan r\Vert \leq
\frac{\summaryCrit \curvtor}{2\minSigma}\sslen^2 \ssler + \frac{\summaryCrit\curvtor\sigmadiff}{4\minSignal\minSigma}\sslen^{\alpha} \ssler\\
&\leq \frac{\summaryCrit \curvtor}{2\minSigma}\left(\sslen^{2-\alpha} + \frac{\sigmadiff}{2\minSignal}\right)\sslen^{\alpha}\ssler
\leq \frac{\summaryCrit\sigmadiff \curvtor}{2\minSignal\minSigma}\sslen^{\alpha}\ssler.
\end{aligned}
\end{equation}
Plugging this, $a^\top r = \Var{a^\top X, Y|\CR} = \minSignal\sslen \ssler$, $a^\top \Sigma a = \Var{a^\top X|\CR} \leq 2\sslen^2$
(Lemma \ref{lem:technical_inequalities}),
and $\N{\Ptan\Sigma \Pperp} \leq \curvtor \sigmadiff\sslen^{1+\alpha}$ into
\eqref{eq:bound_ab_1}, we obtain
\begin{align*}
2a^\top b &\geq \minSignal \frac{\ssler}{\sslen} - \frac{\curvtor \sigmadiff \sslen^{1+\alpha}}{\sslen^2} \frac{\summaryCrit \sigmadiff \curvtor}{2\minSignal \minSigma} \sslen^{\alpha}\ssler
= \minSignal \frac{\ssler}{\sslen} - \frac{\summaryCrit}{2} \frac{\curvtor^2 \sigmadiff^2 \sslen^{2\alpha}}{\minSignal\minSigma} \frac{\ssler}{\sslen}
\geq \frac{\minSignal (3-\summaryCrit)}{4L_f}
\end{align*}
where $\ssler/\sslen \geq 1/(2L_f)$ by Lemma \ref{lem:bound_curve_segment} in the last inequality.
By the requirement $\summaryCrit < 3$ it follows that $a^\top b > 0$. We can transfer the lower boundedness
to the estimate $a^\top \hat b$ by
\begin{align*}
a^\top \hat b \geq a^\top b - \N{\Ptan (b - \hat b)} \geq \frac{\minSignal(3-\summaryCrit)}{8L_f} -  C L_f \summaryMax^2 \minSignal\left(1 + \frac{\kappa \sslen^{2}}{\minSigmaY}\right)\frac{\log(D) + u}{\sqrt{N}}
\end{align*}
with probability $1-4\exp(-u)$, and where $C$ is some universal constant. Using the condition \eqref{eq:thm_3_requirement_on_N} that bounds $N$ from below
$a^\top \hat b > 0$ with probability $1-4\exp(-u)$.

\noindent
2.2 Now we can prove decomposition \eqref{eq:aux_thm_3_initial}.
First notice that Pythagoras gives $\N{\hat a - a}^2 = \N{\Ptan \hat a - a}^2 + \N{\Pperp \hat a}^2$.
Furthermore since $a^\top \hat b > 0$, we can rewrite $a =  \Vert \Ptan \hat b\Vert ^{-1} \Ptan \hat b$
to get
\begin{align*}
\N{\Ptan \hat a - a}^2 = \N{\frac{\Ptan \hat b}{\Vert \hat b\Vert} - \frac{\Ptan \hat b}{\Vert \Ptan \hat b\Vert}}^2
= \N{\Ptan\hat b}^2 \left(\frac{1}{\Vert\hat b\Vert} - \frac{1}{\Vert \Ptan \hat b\Vert}\right)^2
= \left(\frac{\Vert \Ptan \hat b\Vert - \Vert \hat b\Vert}{\Vert\hat b\Vert} \right)^2
\leq \frac{ \Vert \Pperp \hat b\Vert^2}{\Vert\hat b\Vert^2 },
\end{align*}
where we used the triangle inequality in the last step. Therefore, we get
$\N{\hat a - a}^2 \leq \N{\Ptan \hat a - a}^2 + \N{\Pperp \hat a}^2 \leq 2 \Vert \Pperp \hat b\Vert^2 \Vert\hat b\Vert^{-2}$
which implies
\begin{align}
\label{eq:aux_thm_3_initial_later}
\N{\hat a - a} &\leq \sqrt{2} \frac{\Vert \Pperp b\Vert + \Vert \Pperp (\hat b - b)\Vert}{\N{\Ptan b} - \Vert \Ptan(\hat b - b)\Vert}.
\end{align}

\noindent
3. In this final step we combine \eqref{eq:aux_thm_3_initial_later} with the other results of steps
1 and 2. First we notice that the denominator in \eqref{eq:aux_thm_3_initial_later}
is bounded from below by $1/16\minSignal (3-\summaryCrit)L_f^{-1}$ by choosing the universal $C$ in the
requirement \eqref{eq:thm_3_requirement_on_N} large enough. $\Vert \Pperp b\Vert$ is bounded
as in \eqref{eq:thm_3_bound_Qb}, and for $\Vert \Pperp (\hat b - b)\Vert$ we use
the concentration bound \eqref{eq:bound_final_perp}.
\end{proof}

\subsubsection{Global analysis}
\label{subsubsec:appendix_global_analysis}
In this part we analyze the global error of approximating the tangent field by
proving Corollary \ref{cor:tangent_bound_all}. The result can be established quickly from
Theorem \ref{thm:midpoint_estimation} once we ensure that each level set contains sufficiently many samples.
Indeed this is the case under \ref{ass:A3} as shown in the following Lemma.

\begin{lemma}
\label{lem:net_property}
Let \ref{ass:A3} hold, and let $\{X_i : i \in [N]\}$ be $N$ i.i.d. copies of $X$. For $0 < u < N$ we have
\begin{equation}\label{eqn:dg_net}
\bbP\left(\{V_i : i \in [N]\} \textrm{ is a }\frac{\lengamma u}{c_V N}\textrm{-net wrt. } d_{\gamma}(\cdot,\cdot) \right) \geq 1 - \exp(-u).
\end{equation}
Furthermore if $\{\CX_j : j \in [J]\}$ and $\{\CY_j : j \in [J]\}$ is a partition according to \eqref{eq:dyadic_partitioning} for
some $J^{-1} > 4\sigma_{\varepsilon}$ and $N > \frac{8L_f \lengamma u}{c_V }J$
we have
\begin{equation}\label{eqn:num_local_samples}
\bbP\left(\min_{j \in J}\SN{\CX_j} \geq \frac{1}{4L_f}\frac{c_V }{\lengamma u } \frac{N}{J} \right) \geq 1 - \exp(-u).
\end{equation}
\end{lemma}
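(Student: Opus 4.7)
The plan is to handle the two claims separately by leveraging the volume-comparison property \ref{ass:A3}: for any connected $\CS\subset\Im(\gamma)$, $\bbP(V\in\CS)$ is comparable to $\SN{\CS}/\SN{\CI}$ up to the multiplicative factor $c_V$. Because $\gamma$ is arclength-parametrized, this reduces everything to a one-dimensional tail estimate on the number of samples that fall in a prescribed arc.

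For the net property \eqref{eqn:dg_net}, I would partition $\CI$ into $M=\lceil c_V N/u\rceil$ consecutive subintervals $I_1,\dots,I_M$ of length at most $\delta=\lengamma u/(c_V N)$ and observe that, whenever every arc $\CS_m:=\gamma(I_m)$ contains at least one sample $V_i$, the set $\{V_i : i\in[N]\}$ is automatically a $\delta$-net in $d_\gamma$, since any point of $\Im(\gamma)$ lies in the same arc as some $V_i$. By \ref{ass:A3} each arc has mass $\bbP(V\in\CS_m)\geq \SN{\CS_m}/(c_V\lengamma)$, so the probability that a fixed arc is missed by all $N$ samples is at most $(1-\SN{\CS_m}/(c_V\lengamma))^N\leq \exp(-N\SN{\CS_m}/(c_V\lengamma))=\exp(-u)$. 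A union bound over the $M$ arcs and the stated calibration of $u$ (absorbing the resulting $\log M$ factor into the implicit constants of the lemma) then yields the probability bound.

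For \eqref{eqn:num_local_samples}, I would identify each level set $\CX_j$ with an arc $\CS_j\subset\Im(\gamma)$ such that $\bbP(V\in\CS_j\mid \CR_j)=1$. Lemma \ref{lem:bound_curve_segment} combined with the hypothesis $J^{-1}>4\sigma_\varepsilon$ gives $\SN{\CS_j}\geq L_f^{-1}(J^{-1}-2\sigma_\varepsilon)\geq 1/(2L_fJ)$, and then \ref{ass:A3} implies $p_j:=\bbP(X_i\in\CX_j)\geq 1/(2L_fc_V J\lengamma)$. Thus $\SN{\CX_j}\sim\mathrm{Bin}(N,p_j)$ with mean $Np_j\gtrsim N/(L_fc_VJ\lengamma)$, and a standard multiplicative Chernoff lower tail $\bbP(\mathrm{Bin}(N,p_j)\leq Np_j/2)\leq \exp(-Np_j/8)$, together with a union bound over the $J$ level sets, produces the claim after routine constant tracking under the sample size hypothesis $N>8L_f\lengamma uJ/c_V$.

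The main obstacles are combinatorial bookkeeping rather than conceptual. In part one, the tension between the $M\asymp c_V N/u$ union-bound factor and the desired clean $\exp(-u)$ tail forces either a mild $\log N$ correction in $\delta$ or an implicit absorption of that factor into the universal constants; the paper takes the latter convention. In part two, the numerical constant $1/(4L_f)$ requires carefully choosing the Chernoff deviation parameter and checking the one-sided concentration together with the $uJ$ dependence in the sample-size threshold, which is the only place the confidence parameter $u$ enters quantitatively.
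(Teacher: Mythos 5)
Your proof of part one is essentially the standard argument (discretize $\Im(\gamma)$ into $M\asymp c_V N/u$ arcs of radius $\delta$, lower-bound the mass of each arc via \ref{ass:A3}, then union-bound over arcs), and indeed both your proof and the paper's terse one face the same tension: the union bound over $M$ arcs introduces a factor that is not literally reconciled with the clean bound $1-\exp(-u)$ in the displayed inequality. You acknowledge this honestly, so no objection on part one beyond what you already flag.

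Part two, however, is where you take a genuinely different route, and it is worth spelling out what each approach buys. The paper does \emph{not} run a fresh concentration argument. Instead it notes that, \emph{on the same event that $\{V_i\}$ is a $\delta$-net with $\delta=\lengamma u/(c_V N)$}, every segment $\CS_j\subset\Im(\gamma)$ of length $\SN{\CS_j}\geq 1/(2L_f J)$ \emph{deterministically} contains at least $\SN{\CS_j}/\delta - 2 \approx c_V N/(2L_f J\lengamma u)-2$ samples (each $V_i\in\CS_j$ implies $Y_i\in\CR_j$ hence $X_i\in\CX_j$, via $\bbP(Y\in\CR_j\mid V\in\CS_j)=1$). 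The sample-size hypothesis $N>8L_f\lengamma u J/c_V$ is tuned precisely so that this deterministic count dominates $c_V N/(4L_f J\lengamma u)$, and the probability bound is then inherited verbatim from part one, with no union bound over $j$. Your route — Binomial/Chernoff lower tail per level set followed by a union bound over $J$ sets — is conceptually valid but structurally more expensive: (i) the union bound over $J$ forces $Np_j/8\gtrsim u+\log J$, inserting an extra $\log J$ that the stated hypothesis does not supply; and (ii) with $p_j\geq 1/(2L_f c_V J\lengamma)$, the needed sample size becomes $N\gtrsim L_f c_V \lengamma J\,(u+\log J)$, i.e.\ with $c_V$ in the \emph{numerator}, whereas the hypothesis $N>8L_f\lengamma u J/c_V$ has it in the \emph{denominator}. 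Since $c_V\geq 1$ the two conditions may still be compatible in specific regimes, but your argument will not match the stated constants or the stated hypothesis without modification. The lesson is that the two claims in this lemma are not independent statements to be proved separately: the second is designed to be a deterministic corollary of the first on the same high-probability event, which is cheaper than re-concentrating.
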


\begin{proof}
Let $\epsilon = \frac{\lengamma u}{c_V N}$, and $V \in \Im(\gamma)$.
Since \ref{ass:A3} implies $\bbP\left(V' \in \CB_{d_{\gamma}}(V,\varepsilon)\right) > c_V\varepsilon \lengamma^{-1}$, where $V'$ is an independent copy of $V$, we have
\begin{align*}
\bbP\Big(\{V_i : i \in [N]\} \textrm{ is a }\frac{\lengamma u}{c_V N}\textrm{-net w.r.t.}\, d_{\gamma}\Big) &= 1 - \bbP\LRP{\exists V: (\forall i) \,V\not\in \CB_{d_{\gamma}}(V_i,\varepsilon)}\\&= 1 - \prod_{i=1}^N \LRP{1 - \bbP(V \in \CB_{d_{\gamma}}(V_i,\varepsilon)}\geq 1 - \exp(-u).
\end{align*}
For the second statement let $j \in [J]$ arbitrary and denote $\CR_j = [a_j,b_j]$, $\CR_j^{-} = [3/4a_j + 1/4 b_j, 1/4a_j + 3/4b_j]$.
Then, since $J^{-1} = \SN{\mathcal{\CR}_j} > 4\sigma_{\varepsilon}$ we have $\bbP(Y \in \CR_j | f(X) \in \CR_{j}^{-}) = 1$, and thus there exists a segment
$\CS_j \subset \Im(\gamma)$ with $\SN{\CS_j} \geq 1/2 L_f^{-1}\SN{\CR_j} =1/2 L_f^{-1}J^{-1}$
such that $\bbP(Y \in \CR_j | V \in \CS_j) = 1$. The result follows from
\begin{align*}
\bbP\left(\min_{j \in J}\SN{\CX_j} \geq \frac{1}{4L_f}\frac{c_V }{\lengamma u } \frac{N}{J} \right) \geq
\bbP\left(\min_{j \in J}\SN{\CX_j} \geq \frac{1}{2JL_f}\frac{c_V N}{\lengamma u} - 2\right)
\geq \bbP\left(\left\{V_i\right\}_{i=1}^{N} \textrm{ is a } \frac{\lengamma u}{c_V N}\textrm{-net}\right),
\end{align*}
where we used $N > \frac{8L_f \lengamma u}{c_V }J$ to simplify the bound on $\min_{j \in [J]}\SN{\CX_j}$ in the first inequality.
\end{proof}

\begin{proof}[Proof of Corollary \ref{cor:tangent_bound_all}]
Let us first check whether the conditions of Theorem \ref{thm:midpoint_estimation} are satisfied for each $j \in [J]$.
Clearly, \eqref{eqn:global_otherconds} implies \eqref{eq:thm3_conds} for all $j \in [J]$. Furthermore the number of
samples satisfies with probability exceeding $1-\exp(-u)$ by Lemma \ref{lem:net_property}
\begin{align*}
\min_{j \in [J]} \SN{\CX_j} \geq \frac{\uniformC}{4L_f\lengamma}\frac{N}{uJ} \gtrsim \max\left\{C_N (\log(D) + \log(J)u)^2, D\right\} =: N_{\textrm{LB}}.
\end{align*}
Thus, $\SN{\CX_j}$ satisfies \eqref{eq:thm3_conds} for $u\log(J)$ instead of $u$ for all $j \in [J]$ as soon as $C_N$ is equal to $C_N$ in Theorem \ref{thm:midpoint_estimation} multiplied by $4L_f \lengamma \uniformC^{-1}$. Denote now $e_j := \Vert \hat a_j - a_j\Vert$.
Using Theorem \ref{thm:midpoint_estimation} and the union bound we obtain
\begin{align*}
&\bbP\left(\max_{j \in [J]}e_j \leq \frac{C_A \kappa}{J} + C_E \frac{\log(D)u + \log(J)u^2}{\sqrt{N J}}\right)
\geq\bbP\left(\max_{j \in [J]}e_j \leq \frac{C_A \kappa}{J} + \tilde C_E \frac{\log(D) + \log(J)u}{\sqrt{\SN{\CX_j}}J}\right)\\
&\quad \geq \bbP\left(\max_{j \in [J]}e_j \leq \frac{C_A \kappa}{J^2} + \tilde C_E \frac{\log(D) + \log(J)u}{\sqrt{\SN{\CX_j}}J} \bigg| \min_{j \in [J]}\SN{\CX_j} \geq N_{\textrm{LB}}\right)
\bbP\left(\min_{j \in [J]} \SN{\CX_j} \geq N_{\textrm{LB}} \right)\\
&\quad \geq (1-J\exp(-\log(J)u))(1-\exp(-u)) = (1-\exp(-u))^2 \geq 1-\exp(-u),
\end{align*}
where $\tilde C_E$ equals $C_E$ in Theorem \ref{thm:midpoint_estimation} up to factors depending on $L_f, \uniformC, \lengamma$.
The result follows by using \eqref{eq:decomposition_tangent_error} and defining $C_A$
as the maximum of $C_A$ in Theorem \ref{thm:midpoint_estimation} and $\SN{\CS_j}\leq 2L_f$.
\end{proof}

\subsection{Proofs for Section \ref{sec:function_estimation}}
\label{subsec:nsim_app_proofs_sec_4}
\subsubsection{Proofs for Section \ref{subsec:function_estimation}}
\label{subsubsec:nsim_app_proofs_sec_41}
Almost linear curves allow to find an equivalent characterization of the geodesic metric
using projections onto the tangent field. This is made precise in the following
Lemma and is a key ingredient to establish the metric equivalency in Proposition \ref{prop:intrinsic_bound_for_approximated_closest_neighbors}.

\begin{lemma}
\label{lem:almost_linearity}
Let $\gamma : \FI \rightarrow \bbR^D$ be a $\theta$-almost linear curve. Then for ${t}' \geq t$ and $\tilde t \textrm{ arbitrary}$
\begin{align*}
\theta d_{\gamma}\,(\gamma(t),\gamma({t}')) &\leq \left\langle  \gamma'(\tilde t), \gamma({t}') - \gamma(t)\right\rangle \leq d_{\gamma}(\gamma(t), \gamma({t}')).
\end{align*}
\end{lemma}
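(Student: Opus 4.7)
The plan is to reduce both inequalities to a single integral identity obtained from the fundamental theorem of calculus, and then apply the defining inequality of $\theta$-almost linearity (for the lower bound) and Cauchy--Schwarz (for the upper bound) pointwise under the integral sign.

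First, I would record two consequences of $\gamma$ being an arc-length parametrization: namely $\Vert\gamma'(s)\Vert = 1$ for all $s\in\FI$, and $d_{\gamma}(\gamma(t),\gamma(t')) = t'-t$ whenever $t'\geq t$. Then I would invoke the fundamental theorem of calculus to write
\begin{equation*}
\gamma(t') - \gamma(t) = \int_t^{t'} \gamma'(s)\,\mathrm{d}s,
\end{equation*}
take the inner product with $\gamma'(\tilde t)$ on both sides, and interchange the inner product with the integral to obtain
\begin{equation*}
\left\langle \gamma'(\tilde t),\, \gamma(t')-\gamma(t)\right\rangle = \int_t^{t'} \left\langle \gamma'(\tilde t),\, \gamma'(s)\right\rangle\,\mathrm{d}s.
\end{equation*}

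For the lower bound, I would apply the defining property of $\theta$-almost linearity, which gives $\langle \gamma'(\tilde t), \gamma'(s)\rangle > \theta$ for every $s\in\FI$, and integrate over $[t,t']$ to get $\theta(t'-t) = \theta\, d_{\gamma}(\gamma(t),\gamma(t'))$. For the upper bound, I would apply Cauchy--Schwarz together with $\Vert\gamma'(\tilde t)\Vert = \Vert\gamma'(s)\Vert = 1$ to obtain $\langle \gamma'(\tilde t), \gamma'(s)\rangle \leq 1$ pointwise, and integrate to obtain $t' - t = d_{\gamma}(\gamma(t),\gamma(t'))$.

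There is no real obstacle here; the only point requiring a small amount of care is that the ordering assumption $t'\geq t$ is what ensures the integral $\int_t^{t'}(\cdot)\,\mathrm{d}s$ preserves the pointwise inequalities (otherwise a sign flip would reverse them), and that almost-linearity is needed only with a single argument $\tilde t$ fixed while $s$ ranges over the interval of integration, which is exactly the \emph{for all $t,s\in\FI$} quantification in Definition \ref{def:almost_linearity}.
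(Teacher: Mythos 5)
Your proof is correct and follows essentially the same approach as the paper: both use the fundamental theorem of calculus plus the defining inequality of $\theta$-almost linearity for the lower bound. The only cosmetic difference is the upper bound: you apply Cauchy--Schwarz pointwise inside the integral, while the paper applies it globally to $\left\langle\gamma'(\tilde t),\gamma(t')-\gamma(t)\right\rangle$ together with the observation that the Euclidean chord length $\N{\gamma(t')-\gamma(t)}$ is bounded by the geodesic distance; both routes are equally elementary and yield the same bound.
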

\begin{proof}
The upper bound follows by Cauchy-Schwartz, $\N{\gamma(t)-\gamma({t}')} \leq d_{\gamma}(\gamma(t),\gamma({t}'))$ and $\N{\gamma'(\tilde t)} = 1$. For the lower bound
the fundamental theorem of calculus gives
\[
\left\langle  \gamma'(\tilde t), \gamma({t}') - \gamma(t)\right\rangle = \left\langle  \gamma'(\tilde t),\int\limits_{t}^{{t}'}  \gamma'(s)ds \right\rangle
=\int\limits_{t}^{{t}'} \left\langle  \gamma'(\tilde t), \gamma'(s) \right\rangle ds \geq \theta \left({t}' - t\right) = \theta d_{\gamma}(\gamma(t),\gamma({t}')).
\]
\end{proof}

\begin{proof}[Proof of Proposition \ref{prop:intrinsic_bound_for_approximated_closest_neighbors}]
We begin with an intermediate result.
\label{lem:intrinsic_metric_approx}
Let $x = v + w,\ x' = v' + w' \in \suppmarg$, where $v=\pi_\gamma(x)$ and $v'=\pi_\gamma(x')$, and let $S(v,v') \subset \Im(\gamma)$ be the curve segment
between $v$ and $v'$.
Assume $\gamma|_{S(v,v')}$ is $\theta$-almost linear for $\theta > \kappa(S(v,v'))\boundnormal$. We will show that for arbitrary $p\in\bbR^D$ we have
\begin{equation}
\begin{aligned}
\label{eq:delta_estimate}
&\frac{\SN{\left\langle p, x-x'\right\rangle} -  \N{x-x'} \N{p-a(v')}}{1 + \kappa(S(v,v')) \boundnormal } \leq d_{\gamma}(v,v')
\leq \frac{\SN{\left\langle p, x-x'\right\rangle} + \N{x - x'}\N{p-a(v')}}{\theta - \kappa(S(v,v'))\boundnormal}.
\end{aligned}
\end{equation}
For the first inequality we have $\SN{\left\langle p, x - x' \right\rangle} \leq \N{x - x'} \N{p - a(v')} + \SN{\left\langle a(v'), x - x' \right\rangle}$, by Cauchy-Schwartz.
The fundamental theorem of calculus and $a(v) \perp w$, $a(v') \perp w'$ then yield
\begin{align*}
\SN{\left\langle a(v'), x - x' \right\rangle}&\leq \SN{\left\langle a(v'), v - v'\right\rangle} + \SN{\left\langle a(v') - a(v), w\right\rangle}
\leq  d_{\gamma}(v,v') +  \kappa\LRP{S(v,v')} \boundnormal d_{\gamma}(v, v').
\end{align*}
where we used Lemma \ref{lem:almost_linearity} in the last step. The bound follows after dividing by $1 + \kappa\LRP{S(v,v')}\boundnormal$.
For the second inequality in \eqref{eq:delta_estimate} using Lemma \ref{lem:almost_linearity}, and again the fact that $w' \perp a(v')$, we get
\begin{align*}
\theta d_{\gamma}(v,v') &< \SN{\left\langle v - v', a(v')\right\rangle} \leq  \SN{\left\langle x - x', a(v')\right\rangle} + \SN{\left\langle w, a(v') \right\rangle} \\
&\leq \SN{\left\langle x - x', p\right\rangle} + \N{x - x'}\N{p - a(v')} + \kappa\LRP{\CS(v,v')}\boundnormal d_{\gamma}(v,v')
\end{align*}
Collecting terms with $d_{\gamma}(v, v')$ and dividing through by $\theta -  \kappa\LRP{\CS(v,v')} \boundnormal$ yields the desired bound.
Denote now for short $d := \lengamma + 2\boundnormal$. Eqn. \eqref{eq:delta_estimate} implies in the context of Proposition \ref{prop:intrinsic_bound_for_approximated_closest_neighbors}
\begin{equation}
\label{eq:metric_equivalency}
\frac{\intMetMod{x}{\bar x_i}{\infty} - d\N{\hat a(\bar x_i) - a(\bar x_i)}}{2} \leq d_{\gamma}(v, V_i)
\leq \frac{\intMetMod{x}{\bar x_i}{\infty} + d\N{\hat a(\bar x_i) - a(\bar x_i)}}{\theta - \kappa B}
\end{equation}
since  $\kappa(S(v,v')) \leq \kappa$ and
\[\N{x-\bar x_i}\leq \N{v-\bar v_i} + \N{w-\bar w_i}\leq \SN{\CI} + 2\boundnormal = d, \text{ and }  1 +  \kappa\LRP{\CS(v,v')}\boundnormal \leq 2.\]

\noindent
We will now use  \eqref{eq:metric_equivalency} to establish Proposition \ref{prop:intrinsic_bound_for_approximated_closest_neighbors}.
Using the left hand side of \eqref{eq:metric_equivalency} we get
\[
\intMetMod{x}{\bar x_{k(x)}}{\infty} \leq \max_{i=1,\ldots,k}\intMetMod{x}{\bar x_{k^*(x)}}{\infty} \leq
\max_{i=1,\ldots,k} 2 d_{\gamma}(x, \bar x_{k^*(x)}) + d \max_{i \in [N]}\N{\hat a(\bar x_i) - a(\bar x_i)},
\]
where $\max_{i=1,\ldots,k} 2 d_{\gamma}(x, \bar x_{k^*(x)})  = 2d_{\gamma}(x, \bar x_{k^*(x)})$ by the definition of $k^*(x)$.
Then, using the right hand side of \eqref{eq:metric_equivalency}, the result follows by
\begin{align*}
d_{\gamma}(x, \bar x_{k(x)}) &\leq
\frac{\intMetMod{x}{\bar x_{k(x)}}{\infty} + d\N{\hat a(\bar x_i) - a(\bar x_i)}}{\theta - \kappa B}  \leq 2\frac{d_{\gamma}(x, \bar x_{k^*(x)}) + d\N{\hat a(\bar x_i) - a(\bar x_i)}}{\theta - \kappa B}.
\end{align*}
\end{proof}

\subsubsection{Proofs for Section 4.2}
\label{subsubsec:nsim_app_proofs_sec_42}
The proof of Proposition
\ref{prop:local_metric_equivalence_general} is more involved
than for Proposition \ref{prop:intrinsic_bound_for_approximated_closest_neighbors}
and requires two auxiliary results that will be developed first.
The first result states that for any $x \in \suppmarg$ with $v := \pi_{\gamma}(x)$ for which there exist
another $v' \in \Im(\gamma)$ that satisfies the condition $a(v')(x-v') = 0$ (\emph{i.e.} $x$ lies
in the normal ray of $\gamma$ at $v'$), we necessarily have a minimum distance $\N{x-v'}$.
The second result uses this observation to ensure
equivalence of $d_{\gamma}(x,\cdot)$ and $\intMetMod{x}{\cdot}{\tempradius}$
under suitable conditions on $\tempradius$.
We also notice that $\intMetMod{x}{\bar x_i}{\infty} \leq 2 d_{\gamma}(x, \bar x_i) + (\lengamma + 2\boundnormal)\N{\hat a(\bar x_i) - a(\bar x_i)}$,
which has been proven in \eqref{eq:metric_equivalency}, remains
valid and will be used also here.

\begin{lemma}
\label{lem:general_curve_ball_auxiliary}
Assume $x\in\bbR^D$ has a unique projection $v:=\pi_{\gamma}(x)$, satisfying $\N{x - v} \leq\boundnormal < \reach$.
For any $v' \neq v \in \Im(\gamma)$
with $\EUSP{a(v')}{(x - v')} = 0$ we have $\N{x - v'} \geq 2 \reach -\boundnormal$.
Furthermore for any $x'$ with $\N{x'-v'} \leq \boundnormal < \reach$ and $\pi_{\gamma}(x') = v'$ we have
$\N{x-x'} \geq 2(\reach - \boundnormal)$.
\end{lemma}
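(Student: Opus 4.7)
My plan is to exploit the geometric characterization of the reach: for any $v' \in \Im(\gamma)$ and any unit vector $n$ normal to $\gamma$ at $v'$, the open ball $\CB_{\N{\cdot}}(v' + \reach\, n, \reach)$ contains no point of $\Im(\gamma)$ other than $v'$ itself. Granting this, the result should follow from two triangle-inequality computations.

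First I would normalize: set $n := (x-v')/\N{x-v'}$ so that $n$ is a unit vector, and observe that the hypothesis $\EUSP{a(v')}{x-v'}=0$ means $n$ lies in the normal space to $\gamma$ at $v'$. The first step is to argue $\N{x-v'}\geq\reach$. Suppose instead $\N{x-v'}<\reach$: then $x = v'+\N{x-v'}\,n$ lies on the normal ray at $v'$ at a distance strictly less than $\reach$. Since $v'$ is a critical point of $\N{x-\cdot}$ on $\Im(\gamma)$ (by $n\perp a(v')$) and $x$ is inside the reach tube, it must be the unique nearest point, i.e.\ $\pi_\gamma(x)=v'$. This contradicts $\pi_\gamma(x)=v\neq v'$, so $\N{x-v'}\geq\reach$.

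The main step is the following. Consider $c := v' + \reach\, n$. By the reach property invoked above, any point $p\in\Im(\gamma)\setminus\{v'\}$ satisfies $\N{p-c}\geq\reach$; in particular $\N{v-c}\geq\reach$. Since $x$ lies on the ray from $v'$ in direction $n$ at distance $\N{x-v'}\geq\reach$, we have $\N{x-c}=\N{x-v'}-\reach$. Combining via the triangle inequality,
\begin{equation*}
\reach \;\leq\; \N{v-c} \;\leq\; \N{v-x} + \N{x-c} \;\leq\; \boundnormal + \N{x-v'} - \reach,
\end{equation*}
which rearranges to $\N{x-v'}\geq 2\reach-\boundnormal$, giving the first claim. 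The second claim is then immediate: by the triangle inequality,
\begin{equation*}
\N{x-x'} \;\geq\; \N{x-v'} - \N{x'-v'} \;\geq\; (2\reach-\boundnormal) - \boundnormal \;=\; 2(\reach-\boundnormal).
\end{equation*}

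The only non-routine ingredient is the ball-avoidance statement for the reach, which I would either cite from Federer's work (already referenced in the paper as \cite{federer1959curvature}) or justify in one line by perturbing the center slightly inside the tube and invoking uniqueness of the projection guaranteed by the definition of $\reach$. Everything else is elementary.
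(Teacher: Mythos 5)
Your proof is correct and takes essentially the same route as the paper's: the paper picks a point $W(t^*)$ on the normal ray with $\dist{\Im(\gamma)}{W(t^*)}=\reach$ and applies the same triangle inequality between $v$, $x$, $W(t^*)$, $v'$, which is exactly your argument with $t^*=\reach$, i.e.\ your $c = v' + \reach\,n$ is the paper's $W(\reach)$. The rolling-ball separation $\N{p-c}\geq\reach$ that you invoke is the limit $t\to\reach^-$ of the fact $\dist{\Im(\gamma)}{v'+tn}=t$ that the paper cites from \cite{niyogi2008finding}, so the two proofs are interchangeable.
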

\begin{proof}
First note that by the properties of $\reach$ we know that for all $z\in\bbR^D$, such that $\dist{\Im(\gamma)}{z}< \reach$, there is only one $v_z\in\Im(\gamma)$ such that
$\EUSP{a(v_z)}{(z-v_z)} = 0$ and $\N{z - v_z}<\reach$ {\cite[Sec. 4]{niyogi2008finding}}.
Thus, $\N{x-v'}\geq \reach$.
Moreover, for the line $W(t) = v' + ts$, where $s = (x -v')/\N{X - v'}$, we have $\dist{\Im(\gamma)}{W(t)}=\N{W(t)-v'}=t$, for all $t\in(0,\reach)$ and $\dist{\Im(\gamma)}{W(t)}=\reach$ holds for at least one $t^*\in[\reach,\N{x-v'})$.

\noindent
We now want to show that $\N{W(t^*) -x}\geq \reach -\boundnormal$.
Assume the contrary. Then
\[\N{W(t^*) -v} \leq  \N{W(t^*) -x}  + \N{x -v} <\reach,\]
which contradicts $\dist{\Im(\gamma)}{W(t)}=\reach$.
Since $W(t^*)$ lies on a line between $v'$ and $x$ we have
\begin{align*}
\N{x - v'} &= \N{x - W(t^*)} + \N{W(t^*)-v'}\geq 2 \reach -\boundnormal.
\end{align*}

\noindent
The second statement follows from  $\N{x-x'} \geq \N{x-v'} - \N{v'-x'} \geq 2(\reach - \boundnormal)$.
\end{proof}

\begin{lemma}
\label{lem:local_metric_equivalence_general}
Assume \ref{ass:A2} for $\boundnormal = (1/2-q)\reach$ for some $q > 0$.
Let $x \in \suppmarg$ arbitrary, $\bar x \in \suppmarg \cap \CB_{\N{\cdot}}(x, \reach)$
with tangent approximation $\hat a(\bar x)$. If
\begin{equation}
\label{eq:local_metric_equivalence_general_assumption_lemma}
\intMetMod{x}{\bar x}{\reach} < \left(q-\N{\hat a(\bar x) - a(\bar x)}\right)\reach
\end{equation}
we have $d_{\gamma}(x, \bar x) \leq 4\intMetMod{x}{\bar x}{\reach} + 4\reach \N{\hat a(\bar x) - a(\bar x)}$.
\end{lemma}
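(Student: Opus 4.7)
The plan is to analyze the scalar function $g(t) := a(\gamma(t))^\top (x - \gamma(t))$, whose zeros correspond to points on $\Im(\gamma)$ from which $x$ projects critically, and to translate a bound on $|g(\bar t)|$ (where $\gamma(\bar t) = \bar v = \pi_\gamma(\bar x)$) into one on $|\bar t - t_0| = d_\gamma(v, \bar v)$, where $\gamma(t_0) = v = \pi_\gamma(x)$. Since $w := x - v \perp a(v)$, we have $g(t_0) = 0$ automatically. The first step passes the hypothesis through the tangent approximation: writing $\bar x = \bar v + \bar w$ with $a(\bar v) \perp \bar w$ yields
\[
\hat a(\bar x)^\top (x - \bar x) = a(\bar v)^\top (x - \bar v) + (\hat a(\bar x) - a(\bar v))^\top (x - \bar x),
\]
so that $|g(\bar t)| = |a(\bar v)^\top (x - \bar v)| \leq \intMetMod{x}{\bar x}{\reach} + \reach \N{\hat a(\bar x) - a(\bar x)} < q\reach$, using $\N{x-\bar x} \leq \reach$.

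Next I would collect the analytic facts about $g$. Differentiating gives $g'(t) = \gamma''(t)^\top (x - \gamma(t)) - 1$, so $\kappa \leq 1/\reach$ yields $g'(t) \leq \N{x - \gamma(t)}/\reach - 1$, which is strictly negative whenever $\N{x - \gamma(t)} < \reach$. Here Lemma \ref{lem:general_curve_ball_auxiliary} enters: any zero of $g$ other than $t_0$ must occur at some $v' \in \Im(\gamma)$ with $\N{x - v'} \geq 2\reach - B = (3/2 + q)\reach$. However, the ball constraint $\N{x - \bar x} \leq \reach$ forces $\N{x - \bar v} \leq \reach + B = (3/2 - q)\reach$, separating $\bar v$ from this ``antipodal'' region by the gap $2q\reach$. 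Hence the curve segment joining $v$ and $\bar v$ contains no further zeros of $g$, and $g$ retains a definite sign along that segment.

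Finally I would turn sign-retention and curvature control into the quantitative estimate. Assume WLOG $\bar t > t_0$, and note $\N{x - \gamma(s)} \leq B + (s - t_0)$ for $s \in [t_0, \bar t]$. On any initial sub-interval where $s - t_0 \leq (\tfrac14 + q)\reach$ we have $\N{x - \gamma(s)} \leq \tfrac34\reach$, hence $g'(s) \leq -\tfrac14$, so $|g|$ grows with slope at least $\tfrac14$. Combined with the sign-retention from the previous step, a maximal-interval / bootstrap argument forces $|g(\bar t)| \geq \tfrac14(\bar t - t_0)$, which rearranges to the claimed $d_\gamma(v, \bar v) \leq 4\intMetMod{x}{\bar x}{\reach} + 4\reach\N{\hat a(\bar x) - a(\bar x)}$. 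The main obstacle will be making this bootstrap rigorous on the full interval $[t_0, \bar t]$: one must combine the strict hypothesis $\delta + \reach\epsilon < q\reach$ with the no-second-zero property and the observation that critical points of $g$ can only occur where $\N{x - \gamma(s)} \geq \reach$, in order to exclude the scenario in which $|g|$ drops back below the lower bound established on the initial sub-interval.
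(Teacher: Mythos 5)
Your approach is genuinely different from the paper's. The paper projects $x$ onto the normal hyperplane of $\gamma$ at $\bar v$, producing $\tilde x := \bar v + (\Id - a(\bar x)a(\bar x)^\top)(x-\bar v)$; it then shows $\pi_\gamma(\tilde x) = \bar v$ by contradiction (via Lemma~\ref{lem:general_curve_ball_auxiliary}), invokes Federer's Lipschitz bound $\N{\pi_\gamma(x) - \pi_\gamma(\tilde x)} \leq \N{x-\tilde x}/(1-\dist{\tilde x}{\Im(\gamma)}/\reach)$ to control $\N{v-\bar v}$, and finally converts Euclidean to geodesic distance with a bound from Niyogi et al.\ \cite{niyogi2008finding}. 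You instead study the scalar function $g(t)=\gamma'(t)^\top(x-\gamma(t))$ along the curve and try to force linear growth via the differential inequality $g'(t)\leq\kappa\N{x-\gamma(t)}-1$. Both approaches are legitimate in spirit, and your Step~1 reduction $|g(\bar t)|\leq\intMetMod{x}{\bar x}{\reach}+\reach\N{\hat a(\bar x)-a(\bar x)}< q\reach$ is correct.

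However, the final bootstrap step, which you yourself flag as ``the main obstacle,'' has a genuine gap and I do not see how the tools you list close it. Your slope bound $g'(s)\leq -1/4$ is only available while $s-t_0\leq(\tfrac14+q)\reach$ (or, with the quadratic integral bound, up to $s-t_0\leq(\tfrac12+2q)\reach$), and the conclusion $-g(s)\geq\tfrac14(s-t_0)$ therefore only holds on that initial window. Beyond it, the two facts you propose to invoke do not prevent $-g$ from dropping back below $T$: the no-second-zero property (Lemma~\ref{lem:general_curve_ball_auxiliary}) only forbids $g$ from reaching $0$ again, not from rising into the strip $(-T,0)$, and ``critical points only where $\N{x-\gamma(s)}\geq\reach$'' only tells you $g$ can increase once $s$ leaves the near zone, which is precisely the behavior you need to rule out. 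Quantitatively, the integral bound gives $-g(s)\geq(s-t_0)[\tfrac12+q-\tfrac{s-t_0}{2\reach}]$; for $q$ not small (say $q$ close to $1/4$ or larger, which the hypothesis $q>0$ allows) the window $(\tfrac12+2q)\reach$ may not dominate $4T<4q\reach$, and even when it does, nothing in the argument excludes $\bar t-t_0$ exceeding the window while $g(\bar t)\in[-T,T]$. The paper sidesteps all of this by never tracking $g$ along the whole segment $[t_0,\bar t]$: it compares $\pi_\gamma(x)$ and $\pi_\gamma(\tilde x)$ directly in ambient space via the Lipschitz constant of $\pi_\gamma$, using the ball constraint $\bar x\in\CB_{\N{\cdot}}(x,\reach)$ only once, to place $\tilde x$ well inside the reach and to fuel the contradiction in Lemma~\ref{lem:general_curve_ball_auxiliary}. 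To salvage your route you would most likely be forced to consider the foot point $\bar v+(\Id-a(\bar v)a(\bar v)^\top)(x-\bar v)$ and control $\pi_\gamma$ in the ambient metric anyway, i.e.\ to rediscover the paper's construction.
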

\begin{proof}
Let $v := \pi_{\gamma}(x), \bar v := \pi_{\gamma}(\bar x)$, $\omega = \SN{\hat a(\bar x)^\top (x-\bar x)}$
and consider the point $\tilde x := \bar v + Q(\bar x)(x - \bar v)$, where $Q(\bar x) := \Id - a(\bar x)a(\bar x)^\top$.
The point $\tilde x$ satisfies $a(\bar x)^\top(\tilde x - \bar v) = 0$ and, since $a(\bar x) \perp \bar x - \bar v$, it is contained
within a small ball around $x$ bounded by
\begin{align*}
\N{x-\tilde x} = \SN{a(\bar x)^\top(x-\bar x)} \leq \SN{\hat a(\bar x)^\top(x-\bar x)} + \SN{(a(\bar x)-\hat a(\bar x))^\top(x-\bar x)} \leq \omega + \reach \varepsilon_a.
\end{align*}
This also that $\tilde x$ itself is not too far from $\Im(\gamma)$ because using the triangle inequality we get
\[
\dist{\tilde x}{ \Im(\gamma)} \leq \N{\tilde x - x} + \N{x - v} \leq \omega + \reach \varepsilon_a + \boundnormal.
\]
By $\omega + \reach \varepsilon_a + \boundnormal < q\reach + \boundnormal \leq 1/2\reach \leq 1/2\reach$,
it follows that $\tilde x$ has a unique projection  $\tilde v := \pi_{\gamma}(\tilde x)$.
From now, the proof follows two steps. We first show $\pi_{\gamma}(\tilde x) = \bar v$ by contradiction, which is then used
for bounding $d_{\gamma}(x, \bar x)$.

\noindent
1. Assume $\pi_{\gamma}(\tilde x) \neq \bar v$. We have constructed $\tilde x$
with $a(\bar x)^\top(\tilde x - \bar v) = 0$ and $\N{\tilde x - \tilde v} \leq \omega + \reach \varepsilon_a + B$.
Lemma \ref{lem:general_curve_ball_auxiliary} immediately implies the lower bound
\begin{align*}
\N{\tilde x - \bar v} \geq 2\reach - \omega - \reach \varepsilon_a - B = (2 - \varepsilon_a)\reach - \omega - B.
\end{align*}
Using then $\bar x \in \CB_{\N{\cdot}}(x, \reach)$, $\Vert x - \bar x\Vert \geq \Vert \tilde x - \bar v\Vert - \Vert x - \tilde x\Vert  - \Vert \bar v - \bar x\Vert$
from the triangle inequality, and $\boundnormal = (1/2-q)\reach$, we have the inequality
\begin{align*}
\reach &\geq (2 - \varepsilon_a)\reach - \omega - \boundnormal -  \omega - \reach \varepsilon_a - \boundnormal
= (2 - 2\varepsilon_a)\reach - 2\omega - 2\boundnormal = (1 - 2\varepsilon_a)\reach - 2\omega + 2q\reach.
\end{align*}
This implies with $\omega \geq (q-\varepsilon_a) \reach$ a contradiction to Condition \eqref{eq:local_metric_equivalence_general_assumption_lemma}.

\noindent
2. Using first $\max\{\N{x-v}, \N{\tilde x - \tilde v}\} \leq \omega + \reach \varepsilon_a + \boundnormal$ and the Lipschitz-property
of $\pi_{\gamma}$ (see \cite[Theorem 4.8 (8)]{federer1959curvature}), and then   $\omega < (q-\varepsilon_a)\reach$, we get
\begin{align*}
\N{v-\bar v} = \N{\pi_{\gamma}(x) - \pi_{\gamma}(\tilde x)} \leq \frac{\N{x - \tilde x}}{1-\frac{\omega + \reach \varepsilon_a + \boundnormal}{\reach}}
< \frac{\N{x - \tilde x}}{1-\frac{q\reach + (1/2-q)\reach}{\reach}} \leq 2(\omega  + \reach \varepsilon_a).
\end{align*}
Furthermore since $\omega < (q-\varepsilon_a) < (1/4-\varepsilon_a)\reach$ we have $\N{v - \bar v} \leq 2(\omega  + \reach \varepsilon_a) < \reach/2$,
and thus we can apply \cite[Proposition 6.3]{niyogi2008finding} to get
\begin{align*}
d_{\gamma}(v, \bar v)\leq \reach - \reach \sqrt{1-\frac{2\N{v-\bar v}}{\reach}}
\leq \N{v - \bar v} + \frac{2\N{v - \bar v}^2}{\reach}\leq 2\N{v-\bar v} \leq 4(\omega  + \reach \varepsilon_a).
\end{align*}
\end{proof}

\noindent
\begin{proof}[Proof of Proposition \ref{prop:local_metric_equivalence_general}]
Define $\varepsilon_a := \max_{i \in [N]}\N{\hat a(\bar x_i) - a(\bar x_i)}$ and note that $k\delta < (\tempradius - 2\boundnormal)$ implies $\N{x - \bar x_{i^*(x)}} \leq k\delta + 2\boundnormal < \tempradius$, hence
$\bar x_{i^*(x)} \in \CB_{\N{\cdot}}(x, \tempradius)$ for all $i \in [k]$.
This similarly implies $\{\bar x_{i(X)} : i \in [k]\} \subset B_{\N{\cdot}}(x,\tempradius)$, and by
using the left hand side of \eqref{eq:metric_equivalency} we get the bound
\begin{align}
\label{eq:general_curve_ball_auxiliary_2}
\intMetMod{x}{\bar x_{k(x)}}{\tempradius} &\leq \max_{i \in [k]}\intMetMod{x}{\bar x_{k^*(x)}}{\tempradius} \leq 2 d_{\gamma}(x, \bar x_{k^*(x)}) + (\lengamma + 2\boundnormal)\varepsilon_a \\
&\leq 2\delta k + (\lengamma + 2\boundnormal)\varepsilon_a < (q -\varepsilon_a)\reach.
\end{align}
By Lemma \ref{lem:local_metric_equivalence_general} we get $d_{\gamma}(x, \bar x_{k(x)})\leq 4\intMetMod{x}{\bar x_{k(x)}}{\tempradius} + 4\reach \varepsilon_a$ and
the result follows from
\begin{align*}
d_{\gamma}(x, \bar x_{k(x)}) \leq4\intMetMod{x}{\bar x_{k(x)}}{\tempradius} + 4\reach \varepsilon_a \leq 8d_{\gamma}(x, \bar x_{k^*(x)}) + 4(\lengamma + 2\boundnormal)\varepsilon_a + 4\reach \varepsilon_a.
\end{align*}
\end{proof}

\subsection{Referenced results}
\label{subsec:referenced_results}
\begin{theorem}[Matrix Bernstein, 6.1.1. in \cite{Tropp}]
\label{thm:matrix_bernstein}
Consider a finite sequence $ S_k$ of independent, random matrices, with common dimension $d_1\times d_2$ and assume that
\( \bbE[ S_k] = \boldsymbol 0,\) and \(\N{ S_k}\leq L,\, \forall k.\)
Define the random matrix $ S=\sum_{k=1}^N  S_k$, and the matrix variance statistic
\begin{equation}\label{eq:nu} m( S) = \max\LRP{\N{\bbE[ S S^\top]},\N{\bbE[ S^\top S]}}.\end{equation}
Then for all $\epsilon \geq 0$ we have the tail bound
\begin{equation}\label{eq:tail_bound}
\bbP\left(\N{ S} \geq \epsilon \right) \leq \left(d_1 + d_2\right) \exp\left(-\frac{\epsilon^2}{2\left(m( S) + L \epsilon/3\right)}\right).
\end{equation}
\end{theorem}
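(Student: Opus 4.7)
}
The plan is to follow the standard matrix Laplace transform approach of Ahlswede--Winter and Tropp, adapted to handle the rectangular case and the Bernstein-type moment condition. First I would reduce to the Hermitian case via the \emph{Hermitian dilation}: define $\mathcal{H}(M) := \bigl(\begin{smallmatrix} 0 & M \\ M^\top & 0 \end{smallmatrix}\bigr) \in \bbR^{(d_1+d_2) \times (d_1+d_2)}$ for any $d_1 \times d_2$ matrix $M$. The dilation is linear, so $\mathcal{H}(S) = \sum_k \mathcal{H}(S_k)$; the summands $Y_k := \mathcal{H}(S_k)$ are independent Hermitian matrices with $\bbE Y_k = 0$ and $\N{Y_k} = \N{S_k} \leq L$. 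Moreover $\N{S} = \lambda_{\max}(\mathcal{H}(S))$ and $Y_k^2 = \bigl(\begin{smallmatrix} S_k S_k^\top & 0 \\ 0 & S_k^\top S_k \end{smallmatrix}\bigr)$, so that
\[
\left\Vert \sum_k \bbE Y_k^2\right\Vert = \max\left(\left\Vert\sum_k\bbE S_k S_k^\top\right\Vert,\left\Vert\sum_k \bbE S_k^\top S_k\right\Vert \right) = m(S).
\]
This reduces the problem to bounding $\bbP(\lambda_{\max}(\sum_k Y_k) \geq \epsilon)$ for a sum of independent, centered, uniformly bounded Hermitian matrices with variance parameter $m(S)$.

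Next I would apply the Markov-style matrix Laplace transform inequality: for any $\theta > 0$,
\[
\bbP\bigl(\lambda_{\max}(\textstyle\sum_k Y_k) \geq \epsilon\bigr) \leq e^{-\theta \epsilon}\, \bbE\,\mathrm{tr}\,\exp\bigl(\theta \textstyle\sum_k Y_k\bigr).
\]
The main analytic step, and the one I expect to be the key obstacle, is to decouple the expectation of the matrix exponential of a sum into a sum of contributions from the individual summands. For scalar random variables this is immediate by independence and multiplicativity of $\exp$, but for noncommuting matrices it requires Lieb's concavity theorem, which states that $A \mapsto \mathrm{tr}\,\exp(H + \log A)$ is concave on the positive cone for any Hermitian $H$. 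Jensen's inequality applied iteratively to this functional yields the master bound
\[
\bbE\,\mathrm{tr}\,\exp\bigl(\theta\textstyle\sum_k Y_k\bigr) \leq \mathrm{tr}\,\exp\bigl(\textstyle\sum_k \log \bbE\, e^{\theta Y_k}\bigr).
\]

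With decoupling in hand, the remaining task is to bound each matrix cumulant $\log \bbE\,e^{\theta Y_k}$ semidefinitely. Using $\N{Y_k}\leq L$ and $\bbE Y_k = 0$, I would control $\bbE\,e^{\theta Y_k}$ by Taylor expanding $e^{\theta Y_k} = I + \theta Y_k + \sum_{p \geq 2} \theta^p Y_k^p/p!$ and using the operator inequality $Y_k^p \preceq L^{p-2} Y_k^2$ for $p \geq 2$. Taking expectations and summing the geometric-type series valid for $\theta < 3/L$ yields
\[
\bbE\, e^{\theta Y_k} \preceq \exp\!\Bigl(\tfrac{\theta^2/2}{1-\theta L/3}\, \bbE\, Y_k^2\Bigr),
\]
which transfers to $\log \bbE\,e^{\theta Y_k} \preceq g(\theta)\,\bbE Y_k^2$ with $g(\theta) := (\theta^2/2)/(1-\theta L/3)$ by operator monotonicity of $\log$. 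Plugging back in and using $\mathrm{tr}\,\exp(\cdot) \leq (d_1+d_2)\lambda_{\max}\exp(\cdot)$ gives
\[
\bbP(\N{S} \geq \epsilon) \leq (d_1+d_2)\exp\bigl(-\theta \epsilon + g(\theta)\, m(S)\bigr), \qquad 0 < \theta < 3/L.
\]
Finally I would optimize the right-hand side by choosing $\theta = \epsilon/(m(S) + L\epsilon/3)$, which lies in the valid range and produces exactly the claimed bound $(d_1+d_2)\exp(-\epsilon^2/(2(m(S)+L\epsilon/3)))$. The genuinely hard step in this whole program is Lieb's concavity theorem; everything else is a bookkeeping exercise once that deep matrix-analytic input is available.
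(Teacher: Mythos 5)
The paper does not prove this theorem; it is quoted verbatim from Tropp's monograph (Theorem 6.1.1 in \cite{Tropp}) and used as a black box, so there is no in-paper argument to compare against. Your sketch is a faithful reconstruction of the proof that appears in Tropp's text: Hermitian dilation to reduce the rectangular case to a symmetric one, the matrix Laplace transform bound $\bbP(\lambda_{\max}(\sum_k Y_k)\geq\epsilon)\leq e^{-\theta\epsilon}\,\bbE\,\mathrm{tr}\,\exp(\theta\sum_k Y_k)$, Lieb's concavity theorem to obtain the subadditivity of the matrix cumulant generating function, a Taylor-series bound on the matrix mgf of each bounded summand, and the substitution $\theta = \epsilon/(m(S)+L\epsilon/3)$, which one can check lies in $(0,3/L)$ and evaluates the exponent to exactly $-\epsilon^2/\bigl(2(m(S)+L\epsilon/3)\bigr)$. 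All five steps are correct, and you correctly identify Lieb's theorem as the only nontrivial analytic input. One small informality: the semidefinite comparison $Y_k^p\preceq L^{p-2}Y_k^2$ for $p\geq2$ is indeed valid for Hermitian $Y_k$ with $\N{Y_k}\leq L$ (diagonalize and compare scalar eigenvalues, using $\lambda^{p-2}\leq L^{p-2}$ for $|\lambda|\leq L$), so taking expectations and summing the series gives $\bbE\,e^{\theta Y_k}\preceq I + g(\theta)\,\bbE Y_k^2\preceq\exp\bigl(g(\theta)\,\bbE Y_k^2\bigr)$ with $g(\theta)=(\theta^2/2)/(1-\theta L/3)$, after which operator monotonicity of $\log$ closes the argument as you describe.
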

\begin{remark}
\label{rem:matrix_bernstein_version}
Let us make a short comment regarding Theorem \ref{thm:matrix_bernstein}. Jensen's inequality gives
\[ m( S)  \leq \bbE\max \N{ S S^\top},\N{ S^\top S}=\bbE\N{ S}^2.\]
Hence, it is sufficient to bound $\bbE\N{ S}^2$.
Moreover, \eqref{eq:tail_bound} holds if we replace $m( S)$ with its upper bound $\mu\geq m( S)$.
Rewriting now the right hand side of \eqref{eq:tail_bound} as
\[ \exp\LRP{ \log(d_1+d_2)-\frac{\epsilon^2}{2\LRP{\nu + L \epsilon/3}} }=:\exp(-u),\]
for $u >0$, leads to a quadratic equation for $\epsilon$, the solution of which is given as
\begin{equation}\label{eq:tail_bound_use1} \epsilon = \frac{1}{3}\LRP{\sqrt{ L^2\LRP{u + \log(d_1+d_2)}^2 +18\nu(u +\log(d_1+d_2)) } +L(u  + \log(d_1+d_2))}.\end{equation}
Algebraic manipulation shows that this can be bounded by $\epsilon\leq C\max\LRP{L,\sqrt{\nu}} \LRP{u + \log(d_1+d_2)}$
for some universal constant $C>0$.
Finally, monotonicity of probability gives $\bbP\LRP{\N{ S}\geq \epsilon} \geq\bbP\LRP{\N{ S}\geq \epsilon'}$ for $\epsilon\leq\epsilon'$.
Thus, for every $u>0$
\begin{equation}
\label{eq:tail_bound_final}
\begin{aligned}
\bbP\LRP{\N{ S}\leq C\max\LRP{L,\sqrt{\nu}} \LRP{u + \log(d_1+d_2)} } \leq 1- \exp(-u).
\end{aligned}
\end{equation}
\end{remark}
\end{document}